\documentclass{amsart}

\usepackage[utf8]{inputenc}

\usepackage{amssymb}
\usepackage[hidelinks]{hyperref}
\usepackage[textsize=small]{todonotes}
\usepackage{tikz}
\usetikzlibrary{decorations.text}
\usepackage{xcolor}
\usepackage[normalem]{ulem}
\usepackage{mathrsfs}
\usepackage{graphicx} 


\theoremstyle{plain}
\newtheorem{theorem}{Theorem}[section]
\newtheorem*{theorem-wo-number}{Theorem}
\newtheorem{lemma}[theorem]{Lemma}
\newtheorem{corollary}[theorem]{Corollary}
\newtheorem{proposition}[theorem]{Proposition}

\theoremstyle{definition}
\newtheorem{definition}[theorem]{Definition}

\newtheorem{example}[theorem]{Example}

\theoremstyle{remark}



\newcommand{\bQ}{\mathbb{Q}}
\newcommand{\Q}{\bQ}
\newcommand{\bN}{\mathbb{N}}
\newcommand{\N}{\bN}

\newcommand{\cA}{\mathcal{A}}
\newcommand{\cB}{\mathcal{B}}
\newcommand{\cC}{\mathcal{C}}

\newcommand{\cI}{\mathcal{I}}
\newcommand{\I}{\cI}
\newcommand{\cJ}{\mathcal{J}}
\newcommand{\J}{\cJ}
\newcommand{\cK}{\mathcal{K}}

\newcommand{\cP}{\mathcal{P}}


\newcommand{\bnumber}{\mathfrak{b}}

\newcommand{\dnumber}{\mathfrak{d}}

\DeclareMathOperator{\add}{add}
\DeclareMathOperator{\non}{non}
\DeclareMathOperator{\cov}{cov}
\DeclareMathOperator{\cof}{cof}
\DeclareMathOperator{\adds}{\add^*}
\DeclareMathOperator{\nons}{\non^*}
\DeclareMathOperator{\covs}{\cov^*}
\DeclareMathOperator{\cofs}{\cof^*}
\newcommand{\fin}{\mathrm{Fin}}
\newcommand{\Fin}{\fin}
\newcommand{\conv}{\mathrm{conv}}

\newcommand{\BI}{\mathrm{BI}} 

\DeclareMathOperator{\closure}{cl}

\DeclareMathOperator{\FinBW}{FinBW}
\DeclareMathOperator{\ot}{ot}

\renewcommand{\subset}{\subseteq}


\begin{document}


\title{Critical ideals for compact spaces}


\author[R.~Filip\'{o}w]{Rafa\l{} Filip\'{o}w}
\address[Rafa\l{}~Filip\'{o}w]{Institute of Mathematics\\ Faculty of Mathematics, Physics and Informatics\\ University of Gda\'{n}sk\\ ul.~Wita Stwosza 57\\ 80-308 Gda\'{n}sk\\ Poland}
\email{rafal.filipow@ug.edu.pl}
\urladdr{\url{http://mat.ug.edu.pl/~rfilipow}}

\author[M.~Kowalczuk]{Ma\l{}gorzata Kowalczuk}
\address[Ma\l{}gorzata Kowalczuk]{Institute of Mathematics\\ Faculty of Mathematics, Physics and Informatics\\ University of Gda\'{n}sk\\ ul.~Wita Stwosza 57\\ 80-308 Gda\'{n}sk\\ Poland}
\email{m.kowalczuk.090@studms.ug.edu.pl}

\author[A.~Kwela]{Adam Kwela}
\address[Adam Kwela]{Institute of Mathematics\\ Faculty of Mathematics\\ Physics and Informatics\\ University of Gda\'{n}sk\\ ul.~Wita  Stwosza 57\\ 80-308 Gda\'{n}sk\\ Poland}
\email{Adam.Kwela@ug.edu.pl}
\urladdr{\url{https://mat.ug.edu.pl/~akwela}}

\thanks{The third-listed author was supported by the Polish National Science Centre project OPUS No. 2024/53/B/ST1/02494.}


\date{\today}


\subjclass[2020]{Primary: 54A20, 03E05. Secondary: 03E10, 03E15, 03E17, 03E75, 40A05, 54H05.}

%
%
%


\keywords{compact countable space, 
Mazurkiewicz-Sierpi\'{n}ski theorem, 
limit point, 
convergent subsequence, 
ideal, 
conv ideal, 
Kat\'{e}tov order,
FinBW property,
Borel complexity of an ideal, 
cardinal characteristics of an ideal}


\begin{abstract}
For each countable ordinal $\alpha$, we introduce an ideal $\conv_\alpha$ and use it to characterize the class of all compact countable spaces which are homeomorphic to the space $\omega^{\alpha}\cdot$ $n+1$ with the order topology. The characterization is expressed in terms of finding a convergent subsequence  defined on a set  not belonging to $\conv_\alpha$.
\end{abstract}


\maketitle




\section{Introduction}\label{sec:intro}

See Sections~\ref{sec:prelim} and \ref{sec:critical-ideals} for notions and notations used in the introduction.

For an ideal $\I$ on $\omega$, we define the class 
$\FinBW(\I)$ of all topological  spaces $X$ having the property that for every sequence $(x_n)_{n\in \omega}$ in $X$ there exists $A\notin \I$ such that the subsequence $(x_n)_{n\in A}$ is convergent in $X$ (\cite[Definition~1.1]{MR4584767}).

We  see that  $\FinBW(\Fin)$ coincides with the class of all sequentially compact  spaces, and one can show  that $\FinBW(\Fin\otimes\Fin)$ coincides with the class of all finite spaces, whereas $\FinBW(\BI)$ coincides with the class of all boring spaces (\cite[Theorem~6.5]{MR4584767}).
Using the Kat\v{e}tov order one can show 
that the above mentioned ideals are critical, in a sense, for considered classes of spaces.

\begin{theorem}[{\cite[Theorem~6.5]{MR4584767}}] 
\label{thm:critical-for-finite-boring-all}
Let $\I$ be an ideal on $\omega$.
\begin{enumerate}
        \item $\I\leq_K\Fin$ $\iff$ $\FinBW(\I)$ coincides with the class of all sequentially compact spaces.
        \item $\Fin\otimes \Fin \leq_K\I$ $\iff$ $\FinBW(\I)$ coincides with the class of all finite spaces.
        \item The following conditions are equivalent.
        \begin{enumerate}
            \item $\BI  \leq_K\I$ and $\Fin\otimes\Fin\not\leq_K \I$.
            \item $\FinBW(\I)$ coincides with the class of all boring  spaces.
            \end{enumerate}
    \end{enumerate}
\end{theorem}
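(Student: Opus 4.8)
The engine I would build first is a monotonicity lemma: if $\I\leq_K\J$ then $\FinBW(\J)\subseteq\FinBW(\I)$. To prove it, let $f\colon\omega\to\omega$ witness $\I\leq_K\J$ (so $f^{-1}[A]\in\J$ for every $A\in\I$), take $X\in\FinBW(\J)$ and a sequence $(x_n)_{n\in\omega}$ in $X$, and apply $\FinBW(\J)$ to the pulled-back sequence $(x_{f(n)})_{n\in\omega}$ to get $B\notin\J$ with $(x_{f(n)})_{n\in B}\to p$. Setting $A=f[B]$, a short neighborhood-chase shows $(x_m)_{m\in A}\to p$, and $A\notin\I$ because $A\in\I$ would force $B\subseteq f^{-1}[A]\in\J$. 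Alongside this I would record two universal facts used repeatedly: $\Fin\leq_K\I$ holds for every (proper) ideal via any finite-to-one map, and every finite space lies in $\FinBW(\I)$ by pigeonhole together with properness of $\I$.

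With the lemma and the three given identities $\FinBW(\Fin)=$ sequentially compact, $\FinBW(\Fin\otimes\Fin)=$ finite, $\FinBW(\BI)=$ boring, the forward implications are routine. For (1)$\Rightarrow$: $\I\leq_K\Fin$ gives (sequentially compact) $=\FinBW(\Fin)\subseteq\FinBW(\I)$, while $\Fin\leq_K\I$ gives $\FinBW(\I)\subseteq\FinBW(\Fin)$, so equality holds. For (2)$\Rightarrow$: $\Fin\otimes\Fin\leq_K\I$ gives $\FinBW(\I)\subseteq\FinBW(\Fin\otimes\Fin)=$ finite, and finite spaces are always in $\FinBW(\I)$, so equality holds. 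For (3) (a)$\Rightarrow$(b): $\BI\leq_K\I$ gives $\FinBW(\I)\subseteq$ boring, and the reverse inclusion boring $\subseteq\FinBW(\I)$ I would extract from the clause $\Fin\otimes\Fin\not\leq_K\I$ (a failing boring space should localize to a failure already detectable on the convergent sequence, hence force $\Fin\otimes\Fin\leq_K\I$).

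The backward implications I would reduce to witness spaces plus two Kat\v{e}tov translations. First, $\I\leq_K\Fin$ is equivalent to $\I$ being \emph{not tall}: from an infinite set meeting every member of $\I$ finitely one builds $f$ with finite $\I$-preimages, and conversely. Second, and cleanest, I would prove $\omega+1\in\FinBW(\I)\iff\Fin\otimes\Fin\not\leq_K\I$ by analyzing convergent subsequences in the convergent sequence: a sequence there fails $\FinBW(\I)$ exactly when its value-fibers and its ``diagonal-to-the-limit'' index sets all lie in $\I$, and coding fibers as columns of $\omega\times\omega$ matches this with a Kat\v{e}tov map $\omega\to\omega\times\omega$. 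This instantly yields (2)$\Leftarrow$ (since $\Fin\otimes\Fin\not\leq_K\I$ puts the infinite space $\omega+1$ into $\FinBW(\I)$) and feeds the boring analysis in (3).

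The main obstacle is the backward direction of (1) and the $\BI$-side of (3), i.e.\ manufacturing a witness from the mere failure of a Kat\v{e}tov relation. The subtle point is that the convergent sequence $\omega+1$ only detects $\Fin\otimes\Fin$, so for tall $\I$ with $\Fin\otimes\Fin\not\leq_K\I$ it already lies in $\FinBW(\I)$ and cannot witness $\FinBW(\I)\neq$ (sequentially compact); moreover a second-countable witness cannot work uniformly for all tall $\I$ (forcing every $\I$-positive set to be split would need a countable splitting family). Thus for (1)$\Leftarrow$ one must construct, from tallness alone, a richer sequentially compact space carrying a sequence whose every convergent subsequence is indexed by a set in $\I$, and for (3) one must, from $\BI\not\leq_K\I$, produce a space separating $\FinBW(\I)$ from the boring class; pinning down the exact definition of ``boring'' and checking it against such a two-sided witness is where I expect the genuine work to concentrate.
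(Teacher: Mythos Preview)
The paper does not prove this theorem: it is quoted verbatim from \cite[Theorem~6.5]{MR4584767} in the Introduction as motivating background, with no proof supplied. There is therefore no ``paper's own proof'' to compare your proposal against.

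That said, some of your ingredients do appear in the paper. Your monotonicity lemma is exactly the paper's Theorem~\ref{thm:KAT-vs-FinBW} (also cited from \cite{MR4584767}), and your key equivalence $\omega+1\in\FinBW(\I)\iff\Fin\otimes\Fin\not\leq_K\I$ is the case $\alpha=1$ of the paper's Theorem~\ref{thm:char-conv}, via the isomorphism $\conv_2\approx\Fin\otimes\Fin$ of Proposition~\ref{prop:conv-for-1-2-3}. In this sense items (2) and (3) are cousins of the paper's Theorem~\ref{thm:conv-vs-FinBW} at small ordinals. But note a genuine discrepancy: Theorem~\ref{thm:conv-vs-FinBW} only characterizes $\FinBW(\I)\cap\mathbb{K}$, the intersection with \emph{countable compact} spaces, whereas the statement you are proving concerns the full class $\FinBW(\I)$ among all (Hausdorff) spaces. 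This gap is precisely where you correctly locate the hard work: for (1)$\Leftarrow$ you must, from tallness of $\I$ alone, produce a sequentially compact space outside $\FinBW(\I)$, and for (3)(b)$\Rightarrow$(a) you must argue on the boring side. Those constructions are carried out in \cite{MR4584767} and are not reproduced here.

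Your forward directions and your treatment of (2) are sound. Your own diagnosis of the remaining obstacles in (1)$\Leftarrow$ and (3) is accurate; what is missing is not a mis-step but simply the material from the cited source that this paper takes for granted.
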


In \cite[Section~2.7]{alcantara-phd-thesis}, the author proved  that 
$$\conv\not\leq_K \I \iff [0,1]\in \FinBW(\I).$$
Using the above  equivalence, one can  show that the ideal $\conv$ turns out to be critical for uncountable compact metric spaces as shown in the following  theorem.

\begin{theorem}
\label{thm:Meza-compact-metric-versus-conv}
Let $\I$ be an ideal on $\omega$.
The following conditions are equivalent.  
\begin{enumerate}
    \item 
$\conv\not\leq_K \I$.

\item $\FinBW(\I)$ contains an uncountable compact metric space.

\item $\FinBW(\I)$  contains  all compact metric spaces.

\end{enumerate}
\end{theorem}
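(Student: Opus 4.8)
The plan is to reduce everything to the equivalence $\conv\not\leq_K\I \iff [0,1]\in\FinBW(\I)$ quoted above, and then to move between compact metric spaces using two elementary closure properties of the class $\FinBW(\I)$ together with two classical structural facts. Concretely, I would prove the cycle $(1)\Rightarrow(3)\Rightarrow(2)\Rightarrow(1)$.

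First I would isolate two closure properties, each a direct consequence of the definition of $\FinBW(\I)$. (a) If $X\in\FinBW(\I)$ and $Y\subseteq X$ is closed, then $Y\in\FinBW(\I)$: a sequence in $Y$ has, by hypothesis, a subsequence indexed by some $A\notin\I$ that converges in $X$, and its limit stays in $Y$ because $Y$ is closed. (b) If $X\in\FinBW(\I)$ and $f\colon X\to Z$ is continuous and onto $Z$, then $Z\in\FinBW(\I)$: given a sequence $(z_n)$ in $Z$ pick preimages $x_n$ with $f(x_n)=z_n$, extract $A\notin\I$ with $(x_n)_{n\in A}$ convergent, and observe that $(z_n)_{n\in A}$ converges by continuity of $f$. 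The point of (b) is that the \emph{same} index set $A$ works downstairs, so no product argument is needed.

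For $(1)\Rightarrow(3)$, the quoted equivalence turns (1) into $[0,1]\in\FinBW(\I)$. The Cantor set $2^{\omega}$ sits as a closed subspace of $[0,1]$, so $2^{\omega}\in\FinBW(\I)$ by (a); and by the Alexandroff--Hausdorff theorem every compact metric space is a continuous image of $2^{\omega}$, so every compact metric space lies in $\FinBW(\I)$ by (b), which is (3). The implication $(3)\Rightarrow(2)$ is immediate, since $[0,1]$ is itself an uncountable compact metric space. For $(2)\Rightarrow(1)$, let $K\in\FinBW(\I)$ be uncountable, compact and metric. By the Cantor--Bendixson theorem $K$ has a nonempty perfect kernel, and a nonempty perfect compact metric space contains a homeomorphic copy of $2^{\omega}$; that copy is compact, hence closed in $K$, so $2^{\omega}\in\FinBW(\I)$ by (a). Since $[0,1]$ is a continuous image of $2^{\omega}$, (b) gives $[0,1]\in\FinBW(\I)$, and the quoted equivalence returns (1).

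I expect the genuine content to lie entirely in the two closure lemmas of the second paragraph; both are routine but require a little care (choosing preimages in (b), and keeping the limit inside the closed set in (a)), and everything else is a direct appeal to standard structure theory for compact metric spaces. The main conceptual point---and the step where a naive approach would stall---is the choice to transport the property through \emph{continuous images} rather than products: passing from $[0,1]$ to the Hilbert cube $[0,1]^{\omega}$ directly would require solving the genuinely delicate product problem for $\FinBW(\I)$, whereas routing through the Cantor set avoids it completely.
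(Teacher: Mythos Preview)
Your proof is correct. The paper does not actually give its own proof of this theorem: it merely quotes the equivalence $\conv\not\leq_K\I\iff[0,1]\in\FinBW(\I)$ from \cite[Section~2.7]{alcantara-phd-thesis} and asserts that ``using the above equivalence, one can show'' the theorem, without further detail. Your argument is exactly the natural way to fill in that omission, and it is sound: the two closure lemmas (closed subspaces and continuous images preserve $\FinBW(\I)$) are immediate from the definition, and the Cantor--Bendixson and Alexandroff--Hausdorff theorems do the rest. Your closing remark is also well taken: routing through $2^\omega$ and continuous images is the right move, since closure of $\FinBW(\I)$ under countable products is a genuinely nontrivial (and in general false) question.
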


In the realm of  compact countable spaces, we have the following theorem 
of Mazurkiewicz and Sierpi\'{n}ski   
which characterizes these spaces in terms of countable ordinals with the order topology.
Note that every  compact countable space is metrizable (see e.g.~\cite[Proposition~8.5.7]{MR296671}), and consequently it is  sequentially compact.

\begin{theorem}[{\cite{Mazurkiewicz-Sierpinski}, see also \cite[Theorem~8.6.10]{MR296671}}]
\label{thm:Mazurkiewicz-Sierpinski}
A countable space $X$ is compact  if and only if 
$X$ is homeomorphic to the space $\omega^\alpha\cdot n+1$ with the order topology for some countable ordinal $\alpha$ and $n\in \omega$.
\end{theorem}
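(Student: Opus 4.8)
The plan is to prove the two implications separately, with the forward (model-building) direction being routine and the converse (classification of countable compacta) carrying all the weight. For the easy direction I would read $\omega^\alpha\cdot n+1$ as the ordinal interval $[0,\omega^\alpha\cdot n]$. This is a countable set since $\omega^\alpha\cdot n<\omega^{\alpha+1}<\omega_1$ for $\alpha<\omega_1$, and every ordinal interval of the form $[0,\gamma]$ is compact in the order topology (given an open cover, a least-counterexample / transfinite-recursion argument extracts a finite subcover). Hence $\omega^\alpha\cdot n+1$ is a compact countable space, which settles ``if''.

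For the converse the engine is the Cantor--Bendixson derivative. Since $X$ is compact countable it is metrizable, hence second countable and zero-dimensional, and it is scattered. Writing $X^{(\beta)}$ for the iterated derived sets (sets of limit points), I would first show the perfect kernel $\bigcap_\beta X^{(\beta)}$ is empty: it is a closed, hence compact, subset with no isolated points, and a nonempty perfect subset of a compact metric space is uncountable, contradicting $|X|\le\aleph_0$. Thus there is a least ordinal $\gamma$ with $X^{(\gamma)}=\emptyset$; a limit $\gamma$ is impossible, since a decreasing transfinite intersection of nonempty compact sets is nonempty by the finite intersection property, so $\gamma=\alpha+1$, and $X^{(\alpha)}$ is a nonempty compact space with no limit points, i.e.\ finite of some size $n$. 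The pair $(\alpha,n)$, with $\alpha<\omega_1$, is the Cantor--Bendixson invariant of $X$, and a direct computation shows $\omega^\alpha\cdot n+1$ has the same invariant (its rank-$\alpha$ points are exactly $\omega^\alpha,\omega^\alpha\cdot2,\dots,\omega^\alpha\cdot n$), up to the bookkeeping that a finite space with $m$ points is the case $\alpha=0$, $n=m-1$.

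The heart of the matter is the classification lemma: any countable compact space with invariant $(\alpha,n)$ is homeomorphic to $\omega^\alpha\cdot n+1$. First I would reduce to $n=1$: using zero-dimensionality, separate the $n$ rank-$\alpha$ points by pairwise disjoint clopen sets and refine these to a finite clopen partition of $X$ in which each piece carries exactly one rank-$\alpha$ point, hence has invariant $(\alpha,1)$; since $\omega^\alpha\cdot n+1$ is the topological sum of $n$ clopen copies of $\omega^\alpha+1$, it suffices to treat a single piece. The case $n=1$ I would settle by transfinite induction on $\alpha$. The successor step $\alpha=\beta+1$ is clean: $X^{(\beta)}$ is a sequence converging to the unique top point $p$; around its terms one picks pairwise disjoint clopen neighborhoods of invariant $(\beta,1)$, applies the inductive hypothesis, and recognizes $X$ as the one-point compactification of a countable disjoint sum of copies of $\omega^\beta+1$, which is $\omega^\beta\cdot\omega+1=\omega^{\beta+1}+1$.

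The main obstacle is the limit step together with the gluing bookkeeping. For limit $\alpha$ I would fix an increasing sequence $\beta_k\nearrow\alpha$ and construct a decreasing clopen neighborhood base $V_0\supseteq V_1\supseteq\cdots$ of $p$ with $\bigcap_k V_k=\{p\}$, arranged so that each annulus $V_k\setminus V_{k+1}$ is a compact space whose invariant is controlled by $\beta_k$; applying the inductive hypothesis to the annuli and stacking them yields $X$ as an ordinal interval of order type $\sum_k\omega^{\beta_k}+1=\omega^\alpha+1$, using that $\sup_k\omega^{\beta_k}=\omega^\alpha$ forces the increasing sum to equal $\omega^\alpha$. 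The delicate points---arranging the neighborhood base to be clopen with exactly the prescribed annular ranks, and upgrading the matching of invariants to a genuine homeomorphism rather than a mere order-type coincidence---are where real care is required; the remaining verifications are an exercise in ordinal arithmetic and in the zero-dimensionality of countable compacta.
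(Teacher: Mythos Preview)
The paper does not prove this theorem at all: it is quoted with attribution to Mazurkiewicz--Sierpi\'nski and to Semadeni's book, and is used only as a black box (to justify the decomposition $\mathbb{K}=\bigcup_{\alpha<\omega_1}\mathbb{K}_\alpha$ and in the proof of Theorem~\ref{thm:conv-vs-FinBW}(2)). So there is no in-paper argument to compare your proposal against.

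That said, your outline is the standard Cantor--Bendixson proof and is correct. Two small comments on the limit step, which you rightly flag as the delicate part. First, you cannot really \emph{prescribe} that the annulus $V_k\setminus V_{k+1}$ has rank governed by a preassigned $\beta_k$; what you actually get is that each annulus, being clopen and missing the top point $p$, has some Cantor--Bendixson invariant $(\gamma_k,m_k)$ with $\gamma_k<\alpha$, and then you argue \emph{a posteriori} that $\sup_k\gamma_k=\alpha$ (otherwise $p$ would be isolated in $X^{(\sup_k\gamma_k+1)}$, contradicting $p\in X^{(\beta)}$ for all $\beta<\alpha$). Second, the ordinal-arithmetic fact you need at the end is that any countable sum $\sum_k\delta_k$ with each $\delta_k<\omega^\alpha$ and $\sup_k\delta_k=\omega^\alpha$ equals $\omega^\alpha$; this is exactly indecomposability of $\omega^\alpha$, and it is what turns the stacked annuli into a copy of $[0,\omega^\alpha]$. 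With those two points made precise your sketch goes through.
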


Let $\mathbb{K}$ denote  the class of all countable  compact spaces
and 
 $\mathbb{K}_{\alpha}$
denote  the class of all  compact spaces which are homeomorphic to  
$\omega^{\beta}\cdot n+1$ for some $\beta\leq \alpha$ and $n\in \omega$.  
By Mazurkiewicz-Sierpi\'{n}ski theorem we obtain the equality:
$$\mathbb{K} = \bigcup_{\alpha<\omega_1}\mathbb{K}_\alpha.$$
The main objective of this paper is to define ideals $\conv_\alpha$ and prove the following theorem which shows that $\conv_\alpha$ is a critical ideal for $\mathbb{K}_\alpha$.

\begin{theorem-wo-number}
Let  $\I$ be an ideal on  $\omega$ and  $\alpha$ be a countable ordinal.
The following conditions are equivalent.
\begin{enumerate}
    \item 
$\conv_{(1+\alpha)+1} \leq_K \I$ and $\conv_{1+\alpha} \not\leq_K \I$.
\item 
$\FinBW(\I)\cap \mathbb{K}=\mathbb{K}_{\alpha}.$
\end{enumerate}
\end{theorem-wo-number}

The next  theorem (interesting in its own right) is a crucial ingredient in the proof of the above theorem.

\begin{theorem-wo-number}
     For any ideal $\I$ on $\omega$ and countable ordinal $\alpha$,   
    $$\conv_{1+\alpha}\not\leq_K \I \iff \omega^{\alpha}+1 \in \FinBW(\I).$$ 
\end{theorem-wo-number}

The critical ideals $\conv_\alpha$ are defined in Section~\ref{sec:critical-ideals} where we also prove some basic properties of  these ideals. The proofs of the above theorems are given in Section~\ref{sec:proofs}.
In Section~\ref{sec:Kat-among-cons}, we prove that the sequence $(\conv_\alpha)_{\alpha<\omega_1}$ of the critical ideals is strictly decreasing in the Kat\v{e}tov order.
In Section~\ref{sec:conv-alpha-vs-conv}, we show that the sequence  $(\conv_\alpha)_{\alpha<\omega_1}$ is critical for the space $\omega_1$, but there is no single ideal which is critical for the space $\omega_1$.
In Section~\ref{sec:properties-of-critical-ideals},  we present some additional properties of the critical ideals concerning the property Kat, the property $P^-$, Borel complexity and cardinal characteristics.


\section{Preliminaries}
\label{sec:prelim}

All topological spaces considered in the paper are assumed to be Hausdorff.
A sequentially compact space $X$ is called \emph{boring} (\cite[Definition~3.21]{MR4584767}) if there exists a finite set $F\subseteq X$ such that each one-to-one convergent sequence in $X$ converges to some point from  $F$.
 
Recall that an ordinal number $\alpha$ is equal to the set of all ordinal numbers less than $\alpha$. 
In particular, the  smallest infinite ordinal number $\omega=\{0,1,\dots\}$ is equal to the set of all natural numbers $\N$, and each natural number $n = \{0,\dots,n-1\}$ is equal  to the set of all natural numbers less than $n$.
Using this identification, we can for instance write $n\in k$ instead of $n<k$ and $n<\omega$ instead of $n\in \omega$ or $A\cap n$ instead of $A\cap \{0,1,\dots,n-1\}$. 
Moreover, for ordinals $\alpha<\beta$, we write 
$[\alpha,\beta]$ to denote the set of all ordinals $\xi$ such that  $\alpha\leq \xi\leq \beta$, and similarly for $(\alpha,\beta)$ and other intervals.
If $A$ is a set of ordinals, we write $\ot(A)$ to denote the \emph{order type} of $A$
 i.e.~the unique ordinal $\alpha$ such that $A$ and $\alpha$ are isomorphic.

An \emph{ideal} on a set $X$ is a nonempty family $\I\subseteq\cP(X)$ which is closed under taking finite unions (i.e.~if $A,B\in \I$ then $A\cup B\in\I$)
and subsets (i.e.~if $A\subseteq B$ and $B\in\I$ then $A\in\I$).
For an ideal $\I$,  we write $\I^+=\{A\subseteq X: A\notin\I\}$ and call it the \emph{coideal of $\I$}, 
and we write $\I^*=\{A\subseteq X: X\setminus A\in\I\}$ and call it the \emph{dual filter of $\I$}.
An ideal $\I$ is \emph{tall} (a.k.a. \emph{dense}) if for every infinite $A\subseteq X$ there is an infinite $B\in\I$ such that $B\subseteq A$. 

By identifying sets of natural numbers with their characteristic functions,
we equip $\cP(\omega)$ with the topology of the Cantor space $\{0,1\}^\omega$ and therefore
we can assign topological complexity to ideals on $\omega$.
In particular, an ideal $\I$ is Borel ($F_\sigma$, analytic, resp.) if $\I$ is Borel ($F_\sigma$, analytic, resp.) as a subset of the Cantor space.

\begin{example}\  
\begin{enumerate}

    \item 
     $\fin(X)$ is  the ideal of all finite subsets of $X$.
    We  write $\fin$ instead of $\fin(\omega)$.
    
\item If $\phi:\mathcal{P}(\omega)\to[0,\infty]$ is a countably additive  measure  such that $\phi(\omega)=\infty$, then
$\I_\phi$ given by:
$$A\in\I_\phi\iff \phi(A)<\infty$$
is a \emph{summable ideal}. $\fin$ is a summable ideal given by the measure $\phi(A)=|A|$ for all $A\subseteq\omega$. Each summable ideal is $\Sigma^0_2$.

\item $\I_d$ is the ideal of all subsets of $\omega$  of asymptotic density zero:
$$A\in \I_d\iff \limsup_{n\to\infty}\frac{|A\cap n|}{n}=0.$$
It is a $\Pi^0_3$ ideal.

\item $\{\emptyset\} \otimes \Fin $ is the ideal on $\omega \times\omega$ defined by $$A\in \{\emptyset\}\otimes \fin \iff \forall i \, (|\{j: (i,j)\in A\}|<\omega). $$
It is a $\Pi^0_3$ ideal.

\item 
$\Fin^2 = \Fin\otimes\Fin$ is the ideal on $\omega\times \omega$ defined by
$$A\in \fin\otimes \fin \iff \exists i_0\, \forall i\geq i_0\, (|\{j: (i,j)\in A\}|<\omega).$$
 It is a $\Sigma^0_4$ ideal.
 
\item $\BI$ is the ideal  on $\omega\times \omega \times\omega$ 
introduced in \cite[Definition~4.1]{MR4584767}
and defined by 
\begin{equation*}
    \begin{split}
A\in \BI 
\iff 
\exists i_0\, \left[\right.
&
\,\, \forall i< i_0\, (\, \{(j,k) : (i,j,k)\in A\}\in \Fin\otimes\Fin)
\  \land 
\\&
\left.
\forall i\geq i_0\, (|\{(j,k) : (i,j,k)\in A\}|<\omega)
\right].
    \end{split}
\end{equation*}
It is a $\Sigma^0_4$ ideal. 
\end{enumerate}\end{example}

The vertical section of a set   $A\subseteq X\times Y$ at a point $x\in X$ is defined by $A_{(x)} = \{y\in Y : (x,y)\in A\}$. 
For ideals $\I$ and $\J$ on $X$ and $Y$ respectively, we define the following  ideal (called the \emph{Fubini product} of $\I$ and $\J$):
$$\I\otimes \J = \{A\subseteq X\times Y: \{x\in X:A_{(x)}\notin\J\}\in \I\}.$$
Using the notion of the Fubini product, one can see that  (\cite{MR4584767}) 
$$\BI=(\{\emptyset\}\otimes\fin^2)\cap(\Fin\otimes \Fin(\omega^2)).$$ 

An ideal $\I$ is a \emph{P-ideal} if  for every sequence $(A_n)_{n\in\omega}$ of elements of $\I$ there is $A\in\I$ such that $A_n\setminus A$ is finite for all $n\in\omega$. The ideals $\fin$, $\I_d$, $\{\emptyset\} \otimes \Fin $ and all summable ideals are P-ideals,  whereas $\Fin^2$ and $\BI$ are not P-ideals.

Let $\I$ and $\J$ be ideals on $X$ and $Y$ respectively.
We say that $\I$ and $\J$ are \emph{isomorphic} (in short $\I\approx\J$) 
if there exists a bijection $f:X\rightarrow Y$ such that $A\in\I\iff f[A]\in\J$ for every $A\subset X$.
We say that \emph{$\J$ is below $\I$ in  the Kat\v{e}tov order} (in short $\J\leq_{K}\I$) if there is a function $f:X\rightarrow Y$ such that $f^{-1}[A]\in\I$ for every $A\in \I$. 
We say that ideals $\I$ and $\J$ are \emph{$\leq_K$-equivalent} if $\I\leq_K\J$ and $\J\leq_K\I$.

A relationship between Kat\v{e}tov order and classes $\FinBW(\I)$ is expressed in the following theorem which will be used in other sections.

\begin{theorem}[{\cite[Corollary~10.2]{MR4584767}}]
\label{thm:KAT-vs-FinBW} 
If $\I\leq_K\J$, then $\FinBW(\J)\subseteq \FinBW(\I)$.
\end{theorem}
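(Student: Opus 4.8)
The plan is to unravel both definitions and transport a convergent subsequence across the Katětov reduction by producing an explicit witness. Assume $\I$ and $\J$ are ideals on $\omega$ with $\I \leq_K \J$; by definition this is witnessed by a function $f\colon \omega \to \omega$ such that $f^{-1}[A] \in \J$ for every $A \in \I$. Fix a space $X \in \FinBW(\J)$; the goal is to show $X \in \FinBW(\I)$, so let $(x_n)_{n\in\omega}$ be an arbitrary sequence in $X$, and I must find a set outside $\I$ along which it has a convergent subsequence.

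First I would feed the sequence through the reduction $f$ and consider the composed sequence $(x_{f(m)})_{m\in\omega}$. Since $X \in \FinBW(\J)$, there is a set $B \notin \J$ such that the subsequence $(x_{f(m)})_{m \in B}$ converges in $X$, say to a point $p$. I then set $A := f[B]$ and claim that $A$ is the witness required for $\I$.

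The first key step is to verify $A \notin \I$, which I would prove by contradiction. If $A = f[B] \in \I$, then the Katětov property yields $f^{-1}[A] \in \J$; but $B \subseteq f^{-1}[f[B]] = f^{-1}[A]$, and since $\J$ is closed under subsets this forces $B \in \J$, contradicting the choice of $B$. Hence $A \notin \I$, and because $\Fin \subseteq \I$ the set $A$ is infinite, so it genuinely indexes a subsequence.

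It remains to check that $(x_n)_{n \in A}$ converges to $p$, and this is the step where care is needed, since $f$ need not be injective and so the order and multiplicity of terms may differ between the two sequences. The remedy is the observation that, for any neighborhood $U$ of $p$, one has the identity $\{n \in A : x_n \notin U\} = f[\{m \in B : x_{f(m)} \notin U\}]$: the set appearing before the image is finite because $(x_{f(m)})_{m\in B}$ converges to $p$, and the forward image of a finite set under $f$ is again finite. Thus $\{n \in A : x_n \notin U\}$ is finite for every neighborhood $U$ of $p$, which is exactly the statement that $(x_n)_{n \in A}$ converges to $p$. Since $A \notin \I$, this establishes $X \in \FinBW(\I)$ and completes the proof. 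The only genuine subtlety is this last convergence transfer across a possibly non-injective $f$, and it is resolved entirely by the finite-image computation above.
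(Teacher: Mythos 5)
Your proof is correct. Note that the paper does not actually prove this statement---it imports it directly from \cite[Corollary~10.2]{MR4584767}---so there is no internal proof to compare against; your argument is the standard direct one: compose the sequence with the Kat\v{e}tov witness $f$, extract $B\notin\J$ on which the composed sequence converges, and push forward to $A=f[B]$. Both key verifications are sound: $A\notin\I$ follows from $B\subseteq f^{-1}[f[B]]$ together with the Kat\v{e}tov property and downward closure of $\J$, and the convergence transfer across a possibly non-injective $f$ is exactly handled by your identity $\{n\in A: x_n\notin U\}=f[\{m\in B: x_{f(m)}\notin U\}]$ and the fact that images of finite sets are finite. One cosmetic remark: you invoke $\Fin\subseteq\I$ to ensure $A$ is infinite, a convention the paper's definition of ideal does not formally state (though it holds for every ideal considered there); this is immaterial, since your finite-image computation establishes the convergence of $(x_n)_{n\in A}$ regardless of whether $A$ is infinite.
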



\section{Critical ideals}
\label{sec:critical-ideals}

For a  topological space $X$, 
we write $c(X)$ to denote the set of all convergent sequences in $X$.
For a subset $D\subseteq X$ of a topological space $X$, we write  $\conv(D)$ to denote 
the ideal on  $D$ consisting of all subsets of $D$ which can be covered by ranges of finitely many sequences in $D$ which are convergent in $X$ i.e.
$A\in \conv(D)$
if and only if $A\subseteq D$ and there exist $k\in \omega$ and sequences $(d^{(i)}_n)_{n\in \omega}\in c(X)\cap D^\omega$ for $i<k$ such that 
$$A \subseteq \bigcup_{i<k}\left\{d^{(i)}_n:n\in \omega\right\}.$$

A point $p\in X$ is an \emph{accumulation} (a.k.a.~\emph{limit}) \emph{point} of a set $A\subseteq X$ in a topological space $X$ if $p\in \closure(A\setminus\{p\})$.
By $A^d$ we denote the \emph{derived set of $A$} i.e.~the set of  all accumulation  points of $A$.

\begin{lemma}
\label{lem:conv-ideal-via-derivative-set}
    Let $X$ be a sequentially compact space.
    Let $D\subseteq X$ be a countable infinite subset of $X$.
\begin{enumerate}
    \item \label{lem:conv-ideal-via-derivative-set:item-1}  For every $A\subseteq D$, 
$$A\in \conv(D) \iff A^{d} \text{ is finite}.$$

\item For every infinite set $A\subseteq D$ there exists an infinite set $B\subseteq A$ such that  $B\in \conv(D)$. In particular, the ideal  $\conv(D)$ is tall.\label{lem:conv-ideal-via-derivative-set:item-4}

\end{enumerate}
\end{lemma}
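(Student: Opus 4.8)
The plan is to route both items through the derived set and lean on two features of the hypotheses: that $X$ is Hausdorff (so finitely many points can be separated by pairwise disjoint open sets) and that $X$ is sequentially compact (so every infinite subset has an accumulation point). I would treat the first item as an equivalence and the second as a quick corollary of sequential compactness.

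For the forward implication of the first item I would start from a cover $A\subseteq\bigcup_{i<k}R_i$, where $R_i=\{d^{(i)}_n:n\in\omega\}$ is the range of a sequence converging to some $p_i$. The crux is the observation that in a Hausdorff space the range of a convergent sequence has at most one accumulation point, namely its limit: separating any $q\neq p_i$ from $p_i$ by disjoint open sets leaves only finitely many terms near $q$, so $q\notin\closure(R_i\setminus\{q\})$. Combining this with monotonicity and finite additivity of the derived-set operator gives $A^d\subseteq\bigcup_{i<k}R_i^d\subseteq\{p_0,\dots,p_{k-1}\}$, hence $A^d$ is finite.

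The reverse implication is where I expect the real work. If $A$ is finite it is covered by finitely many constant sequences, so assume $A$ is infinite; then sequential compactness guarantees $A^d=\{p_0,\dots,p_{m-1}\}$ is a nonempty finite set. Using the Hausdorff property I would fix pairwise disjoint open $U_i\ni p_i$ and carry out two sequential-compactness arguments. First, I would show $F:=A\setminus\bigcup_{i<m}U_i$ is finite, since otherwise an injective enumeration of $F$ would have a subsequence converging to a point that is simultaneously an accumulation point of $A$ (so equal to some $p_j$) and a point of the closed set $X\setminus\bigcup_i U_i$ (so equal to no $p_j$) --- a contradiction. Second, because the $U_i$ are disjoint, each $A\cap U_i$ can accumulate only at $p_i$, and I would verify that a countably infinite set whose sole accumulation point is $p_i$ is, under any injective enumeration, a sequence converging to $p_i$: if some neighborhood $V$ of $p_i$ omitted infinitely many of its points, that infinite remainder would accumulate inside the closed set $X\setminus V$, contradicting uniqueness. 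Each $A\cap U_i$ is thus finite or the range of a convergent sequence, and together with the finitely many points of $F$ this presents $A$ as covered by finitely many convergent sequences in $D$, i.e. $A\in\conv(D)$. The main obstacle is precisely this double use of sequential compactness --- first to confine $A$, up to a finite error, to neighborhoods of its accumulation points, and then to recognize each localized piece as a genuine convergent sequence.

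For the second item I would take an infinite $A\subseteq D$, enumerate it injectively, and extract a convergent subsequence by sequential compactness; its range $B\subseteq A$ is infinite and is covered by one convergent sequence, so $B\in\conv(D)$ (alternatively, $B^d$ is a singleton and the first item applies). Since $\conv(D)$ is an ideal on $D$, this is exactly the definition of tallness.
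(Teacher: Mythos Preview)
Your proof is correct and uses the same ingredients as the paper's: Hausdorff separation of finitely many points and sequential compactness to force convergence. The forward direction of (1) and all of (2) match the paper almost verbatim.

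The only structural difference is in the reverse implication of (1). The paper proceeds by induction on $|A^d|$: at each step it separates one accumulation point $d_k$ from the others by an open set $U_k$, splits $A$ into $A\cap U_k$ and $A\setminus U_k$, and applies the inductive hypothesis to each piece. You instead separate all the accumulation points simultaneously by pairwise disjoint $U_0,\dots,U_{m-1}$, show directly that $A\setminus\bigcup_i U_i$ is finite, and then argue that each $A\cap U_i$ is (finite or) the range of a single convergent sequence. Your version is the ``unrolled'' form of the paper's induction: it avoids the inductive scaffolding at the cost of an extra explicit step (the finiteness of the leftover set $F$), which the paper's induction absorbs implicitly. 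Neither approach is more general than the other; yours is marginally more direct, while the paper's is marginally shorter on the page.
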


\begin{proof}

(\ref{lem:conv-ideal-via-derivative-set:item-1}, $\implies$) Take any $A\in \conv(D)$. There exists $k\in \omega$ and sequences $(d^{(i)}_n)_{n\in \omega}\in c(X)\cap D^\omega$ for $i<k$ such that $A \subseteq \bigcup_{i<k} \{d^{(i)}_n:n\in \omega \}.$ The sequence $(d^{(i)}_n)_{n\in \omega}$ is convergent for each $i$, so let $\{d_i : i<k\}$ be the set of their limits. Then $A^d \subseteq (\bigcup_{i<k} \{d^{(i)}_n:n\in \omega \})^d\subseteq \{d_i : i<k\}$ which is finite.

(\ref{lem:conv-ideal-via-derivative-set:item-1}, $\impliedby$) Assume that $A$ is infinite, $A^d$ is finite and let $k$ be the cardinality of the set.
We will prove the statement by induction on $k\in \omega$. 

For $k=1$, let $A^d=\{d\}$. Since $A$ is countable infinite, we can enumerate its elements injectively as $\{a_n : n \in \omega\}$. We claim that the sequence $(a_n)_{n\in \omega}$ is convergent to $d$. Let $U$ be an open neighborhood of $d$ and suppose for the sake of contradiction that there exists infinitely many elements of the set $\{a_n : n \in \omega\}$ that are not in $U$. Since $X$ is sequentially compact, there exists an infinite set $C\subseteq \omega$ such that $(a_n)_{n\in C}$ is convergent to a point $p$ that is different from $d$, so $p\in A^d$ which is a contradiction with our initial assumption. Consequently we can cover $A$ with a convergent sequence, so $A\in \conv(D)$.

Suppose the statement is true for $k$. Take $A^d=\{d_i : i<k+1\}$. Since $X$ is Hausdorff we can find a collection of open sets $\{U_i\subset X : d_i\in U_i \}$ that are pairwaise disjoint. Define a partition of $A$ by $B_1=A\cap U_k$ and $ B_2=A\setminus U_k$. 

We claim $B_1^d\subseteq \{d_k\}$ and $B_2^d\subseteq \{d_i : i<k \}$. 
Take $i<k$. Then $B_1\cap U_i \subseteq U_k \cap U_i= \emptyset$, so $d_i\notin B_1^d$. 
Similarly,  $B_2 \cap U_k = \emptyset $, so $d_k\notin B_2^d$.

So by the induction hypothesis $B_1 \in \conv(D)$ and $B_2 \in \conv(D)$ and therefore $A\in\conv(D)$

(\ref{lem:conv-ideal-via-derivative-set:item-4})
Take an infinite set $A\subseteq D$. 
Take distinct points $a_n \in A$ for $n\in \omega$.
Since $X$ is sequentially compact, there is a convergent subsequence $(a_{k_n})_{n\in \omega}$. Then $B=\{a_{k_n}:n\in \omega\}$ is an infinite subset of $A$ such that $B\in \conv(D)$.
\end{proof}

\begin{definition}\ 
\begin{enumerate}
    \item 
If $X=[0,1]$ with the Euclidean topology, we define the ideal
$$\conv = \conv(\Q\cap [0,1]).$$
\item For a countable  ordinal $\alpha\geq 2$ and 
the space $X=\omega^{\alpha}+1$ with the order topology, we define the ideal
$$\conv_{\alpha} = \conv(\omega^{\alpha}+1).$$
In the rest of the paper, we write ``$\alpha$'' to mean ''a countable ordinal  $\alpha\geq 2$''.
\end{enumerate}
\end{definition}

Below we show that the Borel complexity of  $\conv_\alpha$ is at most $\Sigma^0_4$, and in Corollary~\ref{cor:conv-alpha-is-not-Pi-0-4} we will show that this complexity is not lower. 

\begin{proposition}
\label{prop:Borel_complexity}
$\conv_\alpha\in \Sigma^0_4$ for every $\alpha$.
\end{proposition}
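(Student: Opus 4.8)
The plan is to give an explicit description of membership in $\conv_\alpha$ and then count quantifiers. By Lemma~\ref{lem:conv-ideal-via-derivative-set}(\ref{lem:conv-ideal-via-derivative-set:item-1}), since $X = \omega^\alpha + 1$ is compact (hence sequentially compact) and $D = \omega^\alpha + 1$ is the whole countable space, for $A \subseteq \omega^\alpha + 1$ we have
$$A \in \conv_\alpha \iff A^d \text{ is finite}.$$
So the first step is to reduce the complexity computation entirely to understanding the complexity of the relation ``$A^d$ is finite'' as a subset of the Cantor space $\cP(\omega^\alpha+1)$. This is the key structural simplification, and it is already handed to us by the lemma.

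Next I would analyze the relation ``$p \in A^d$'' for a fixed point $p$. In the order topology on $\omega^\alpha+1$, each non-isolated point $p$ has a neighborhood base of order intervals, and the space is metrizable, so ``$p$ is an accumulation point of $A$'' is an arithmetical condition on $A$: it says that every basic neighborhood $U$ of $p$ contains a point of $A \setminus \{p\}$. Fixing a countable neighborhood base $(U_n^p)_{n\in\omega}$ at each point $p$, the condition $p \in A^d$ becomes
$$\forall n\, \exists q\, \bigl( q \in U_n^p \land q \neq p \land q \in A \bigr),$$
which is a $\Pi^0_2$ condition on $A$ (a ``$\forall n \exists q$'' with an inner clopen condition $q \in A$). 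The second step, then, is to verify this $\Pi^0_2$ bound carefully, keeping track of how the neighborhood bases depend on $p$.

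The third step is to express ``$A^d$ is finite'' using the previous analysis. Finiteness of $A^d$ means
$$\exists F \in [\omega^\alpha+1]^{<\omega}\, \forall p \notin F\, \bigl( p \notin A^d \bigr),$$
i.e. there is a finite bound $i_0$ such that no point beyond the first $i_0$ (in some fixed enumeration of the countable space) lies in $A^d$. Negating the $\Pi^0_2$ formula for $p \in A^d$ gives a $\Sigma^0_2$ condition ``$p \notin A^d$''; a universal quantifier $\forall p$ over a countable set in front of a $\Sigma^0_2$ matrix yields $\Pi^0_3$; and the outer existential $\exists F$ (equivalently $\exists i_0$, a countable union) raises this to $\Sigma^0_4$. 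Assembling these counts gives $\conv_\alpha \in \Sigma^0_4$.

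The main obstacle I anticipate is bookkeeping rather than conceptual: ensuring that the neighborhood bases $(U_n^p)_{n}$ can be chosen uniformly enough that ``$p \in A^d$'' is genuinely $\Pi^0_2$ with the quantifier $\forall p$ ranging correctly, and that the final quantifier alternation is counted without an off-by-one slip. One must be slightly careful that the inner relation in ``$q \in U_n^p$'' is clopen in the $A$-variable (it is, since it refers only to the fixed topology of $\omega^\alpha+1$ and not to $A$), so that only the membership clauses $q \in A$ contribute to the descriptive complexity. Once the uniformity of the neighborhood system is pinned down, the quantifier count is routine and the $\Sigma^0_4$ bound follows directly.
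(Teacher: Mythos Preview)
Your proposal is correct and follows essentially the same route as the paper: both invoke Lemma~\ref{lem:conv-ideal-via-derivative-set} to reduce to ``$A^d$ is finite'' and then count quantifiers to reach $\Sigma^0_4$. The only cosmetic difference is that the paper uses a single countable basis $\{U_n:n\in\omega\}$ for the whole space rather than point-indexed neighborhood bases, which immediately dissolves the uniformity concern you flagged.
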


\begin{proof}
We use a standard quantifier-counting argument. Let $\{U_n:n\in \omega\}$ be a countable basis for the topology of $X=\omega^\alpha+1$. 
By Lemma~\ref{lem:conv-ideal-via-derivative-set}, $A\in \conv_\alpha\iff |A^d|<\omega$, hence  
$$A\in \conv_\alpha \iff \exists F\in [X]^{<\omega}\, \forall \beta\in X\setminus F\, \exists n\in \omega\,\left(\beta\in U_n \land \forall \gamma\in U_n (\gamma\notin A)\right).$$
Since the sets $\{A\subseteq X: \gamma\notin A\}$ are  closed in $X$ for every $\gamma$, we can count  quantifiers to  obtain that $\conv_\alpha$ is $F_{\sigma\delta\sigma}$.
\end{proof}

The following \emph{purely combinatorial} characterization of members of the topologically defined ideal $\conv_\alpha$ will be used  repeatedly in the next sections.

\begin{proposition}
\label{prop:gwiazdka}
Let  $A\subseteq \omega^{\alpha}+1$.
The following conditions are equivalent.
\begin{enumerate}
    \item $A\in \conv_\alpha$.
    \item For every increasing sequence $(\lambda_n)_{n<\omega}$ in $\omega^\alpha$, the set
    $A\cap [\lambda_n, \lambda_{n+1}]$
is finite for all but finitely many $n$.
\end{enumerate}\end{proposition}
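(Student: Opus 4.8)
The plan is to reduce the whole statement to a finiteness condition on the derived set and then argue combinatorially about intervals of ordinals. By Lemma~\ref{lem:conv-ideal-via-derivative-set}(\ref{lem:conv-ideal-via-derivative-set:item-1}), applied to the compact (hence sequentially compact) space $X=\omega^\alpha+1$ with $D=\omega^\alpha+1$, condition~(1) is equivalent to $A^d$ being finite, so it suffices to prove that $A^d$ is finite if and only if (2) holds. The only topological input I will use is a description of $A^d$ in the order topology: the isolated points of $\omega^\alpha+1$ are exactly $0$ and the successor ordinals, so every accumulation point is a countable limit ordinal $q\leq\omega^\alpha$; and since such a $q$ has cofinality $\omega$, membership $q\in A^d$ is equivalent to $A\cap(\beta,q)$ being infinite for every $\beta<q$ (a cofinal subset of a countable limit ordinal is automatically infinite).

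For (1)$\Rightarrow$(2) I would argue by contraposition. Suppose some increasing sequence $(\lambda_n)$ in $\omega^\alpha$ has $A\cap[\lambda_n,\lambda_{n+1}]$ infinite for all $n$ in an infinite set $N$. Each such interval is a closed subset of the compact space $X$, so the infinite set $A\cap[\lambda_n,\lambda_{n+1}]$ has an accumulation point $p_n\in[\lambda_n,\lambda_{n+1}]$, and necessarily $p_n\in A^d$. Because the intervals overlap only at endpoints and $(\lambda_n)$ is strictly increasing, the chain $p_n\leq\lambda_{n+1}\leq\lambda_m\leq p_m$ for $n<m$ in $N$ forces the $p_n$ to be nondecreasing with coincidences possible only between consecutive indices; hence $\{p_n:n\in N\}$ is infinite and so is $A^d$.

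For (2)$\Rightarrow$(1) I would again use contraposition: assume $A^d$ is infinite. Since $A^d\subseteq\omega^\alpha+1$ and the only element of this set not below $\omega^\alpha$ is $\omega^\alpha$ itself, $A^d\cap\omega^\alpha$ is infinite, so I may fix accumulation points $q_0<q_1<\cdots<\omega^\alpha$. The idea is to build $(\lambda_n)$ so that every other interval traps the infinitely many points of $A$ sitting cofinally below some $q_k$. Concretely I interleave, starting from $\lambda_0=0$: choose $\lambda_{2k+1}$ with $q_k\leq\lambda_{2k+1}<q_{k+1}$ and $\lambda_{2k}$ with $\lambda_{2k-1}<\lambda_{2k}<q_k$, which is possible because each $q_k$ is a limit ordinal strictly below $\omega^\alpha$, and this keeps all $\lambda_n<\omega^\alpha$. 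Then $[\lambda_{2k},\lambda_{2k+1}]\supseteq(\lambda_{2k},q_k)$, whose intersection with $A$ is infinite by the cofinality remark, so infinitely many intervals have infinite trace and (2) fails.

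The routine topological facts (identifying the isolated points, the cofinality characterization of $A^d$, and the existence of accumulation points inside the compact intervals) are straightforward. The care goes into the bookkeeping: in the first direction, ruling out that the points $p_n$ collapse to finitely many values through shared interval endpoints; and in the second direction, choosing the interleaved $\lambda_n$ so that they stay strictly increasing, remain strictly below $\omega^\alpha$, and still capture an infinite tail of $A$ in infinitely many intervals. I expect the distinctness argument for the $p_n$ to be the main, if minor, obstacle.
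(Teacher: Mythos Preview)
Your argument is correct and follows the same overall strategy as the paper: reduce to the finiteness of $A^d$ via Lemma~\ref{lem:conv-ideal-via-derivative-set} and then use compactness of the closed intervals. Two places are more laborious than necessary. In $(1)\Rightarrow(2)$ the paper notes that an accumulation point of $A\cap[\lambda_n,\lambda_{n+1}]$ must lie in the half-open interval $(\lambda_n,\lambda_{n+1}]$ (since $\lambda_n$ has a neighborhood meeting $[\lambda_n,\lambda_{n+1}]$ only at $\lambda_n$), and these half-open intervals are pairwise disjoint, so distinctness of the $p_n$ is automatic and your bookkeeping about consecutive coincidences can be dropped. In $(2)\Rightarrow(1)$ your interleaving is unnecessary: once you have $q_0<q_1<\cdots$ in $A^d\cap\omega^\alpha$, simply take $\lambda_n=q_n$; then $q_{n+1}\in A^d$ and $\lambda_n<q_{n+1}$ already force $A\cap(\lambda_n,\lambda_{n+1}]$ to be infinite, which suffices.
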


\begin{proof}
$(1)\implies (2)$
Take $A\in \conv_\alpha$ and suppose for the sake of contradiction that there exists 
an increasing sequence $(\lambda_n)_{n\in \omega}$ in $\omega^\alpha$ and an infinite set $B\subseteq\omega$ such that 
the set $A\cap [\lambda_n , \lambda_{n+1}]$
is infinite for every $n \in B$.
Since $\omega^\alpha+1$ is sequentially compact, we can find an accumulation points of $A$ in $(\lambda_{n} , \lambda_{n+1}]$ for every $n\in B$. 
Then $A^d$ is infinite, so $A\notin \conv_\alpha$ by Lemma~\ref{lem:conv-ideal-via-derivative-set},  a contradiction.

$(2)\implies (1)$
Take $A\notin \conv_\alpha$. 
By Lemma~\ref{lem:conv-ideal-via-derivative-set}, $A^d$ is infinite, so we can pick a strictly increasing sequence $(\lambda_n)_{n\in \omega}$ in $A^d$.
Since successor ordinals are isolated points,  $\lambda_n$ is a limit ordinal for each $n$.
Then $(\xi,\lambda_{n+1}]$ is a neighborhood of $\lambda_{n+1}$ for every $\xi\in (\lambda_n,\lambda_{n+1})$, 
and consequently
$A\cap (\lambda_n,\lambda_{n+1}]$ is infinite for every $n\in \omega$.
\end{proof}

 In the case of successor ordinals, we can obtain the following extension of the above characterization. 

\begin{proposition}
    Let  $A\subseteq \omega^{\alpha+1}+1$.
The following conditions are equivalent.
\begin{enumerate}
    \item $A\in \conv_{\alpha+1}$.
    \item For every increasing sequence $(\lambda_n)_{n<\omega}$ in $\omega^{\alpha+1}$, the set
    $A\cap [\lambda_n, \lambda_{n+1}]$
is finite for all but finitely many $n$.

\item 
\begin{enumerate}
\item  $A\cap (\omega^\alpha\cdot n, \omega^\alpha\cdot (n+1)]$ is finite for all but finitely many $n$, and 
\item  $A\cap (\omega^\alpha\cdot n, \omega^\alpha\cdot (n+1)] \in \conv((\omega^\alpha\cdot n, \omega^\alpha\cdot (n+1)])$  for all  $n$.
\end{enumerate}

\end{enumerate}
\end{proposition}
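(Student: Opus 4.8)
The equivalence $(1)\iff(2)$ requires no new work: it is exactly Proposition~\ref{prop:gwiazdka} applied to the countable ordinal $\alpha+1$ in place of $\alpha$. So the whole task is to prove $(1)\iff(3)$, and my plan is to translate every condition into a statement about the derived set $A^d$ (computed in $X=\omega^{\alpha+1}+1$) by means of Lemma~\ref{lem:conv-ideal-via-derivative-set}. Writing $D_n=(\omega^\alpha\cdot n,\omega^\alpha\cdot(n+1)]$, the Lemma gives $A\in\conv_{\alpha+1}\iff A^d$ is finite, and also $A\cap D_n\in\conv(D_n)\iff (A\cap D_n)^d$ is finite; thus $(1)$ becomes ``$A^d$ finite'' and $(3b)$ becomes ``$(A\cap D_n)^d$ finite for every $n$''. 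Here one first checks the harmless point that a sequence lying in $D_n$ and convergent in $X$ has its limit inside $D_n$ (a sequence in $D_n$ cannot converge down to $\omega^\alpha\cdot n$ in a well-order), so that $\conv(D_n)$ does not depend on whether convergence is taken in $X$ or in the subspace $D_n$.

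The geometric heart of the argument is the decomposition of $A^d$ along the blocks $D_n$. First I would verify that each $D_n$ is \emph{clopen} in $X$: it is open because every point $\beta\in D_n$ has a basic neighbourhood $(\gamma,\beta]$ with $\omega^\alpha\cdot n\le\gamma$ contained in $D_n$ (or $\{\beta\}$ if $\beta$ is a successor), and it is closed because its complement $[0,\omega^\alpha\cdot n]\cup(\omega^\alpha\cdot(n+1),\omega^{\alpha+1}]$ is open. Clopenness yields the two facts I need: the pieces $(A\cap D_n)^d$ are pairwise disjoint (being contained in the disjoint $D_n$), and $(A\cap D_n)^d=A^d\cap D_n$ for every $n$. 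Since the only points of $X$ outside $\bigcup_n D_n$ are $0$ (never an accumulation point, as $X$ is well-ordered) and the top point $\omega^{\alpha+1}$, I obtain the disjoint decomposition
$$A^d=\left(\bigsqcup_{n\in\omega}(A\cap D_n)^d\right)\ \sqcup\ \bigl(A^d\cap\{\omega^{\alpha+1}\}\bigr).$$
As the last summand has at most one element, $A^d$ is finite if and only if each $(A\cap D_n)^d$ is finite and $(A\cap D_n)^d=\emptyset$ for all but finitely many $n$.

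It then remains to match the two conjuncts with $(3b)$ and $(3a)$. The first conjunct is literally $(3b)$ after applying the Lemma. For the second, I use that $D_n$ is compact (a closed subset of the compact space $X$), whence, for $S\subseteq D_n$, one has $S^d=\emptyset$ exactly when $S$ is finite: finite sets have empty derived set, and any infinite subset of the compact metrizable space $D_n$ has an accumulation point. Applying this with $S=A\cap D_n$ turns ``$(A\cap D_n)^d=\emptyset$ for all but finitely many $n$'' into ``$A\cap D_n$ is finite for all but finitely many $n$'', which is exactly $(3a)$. Chaining these equivalences gives: $A^d$ finite $\iff (3a)\wedge(3b)=(3)$; and since $A^d$ finite $\iff(1)$, the proof of $(1)\iff(3)$ is complete.

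The routine parts are the neighbourhood bookkeeping for clopenness and the standard compactness fact about derived sets; the only genuinely load-bearing step is the clopen decomposition of $A^d$, since it is what lets the global finiteness of $A^d$ split into the ``per-block'' condition $(3b)$ together with the ``spread'' condition $(3a)$. I expect no real obstacle beyond being careful at the two boundary points $0$ and $\omega^{\alpha+1}$, where one must confirm that $0$ contributes nothing to $A^d$ and that the top point can contribute at most a single, finiteness-irrelevant, point.
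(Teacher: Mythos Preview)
Your argument is correct. The paper organizes the proof as the cycle $(1)\Leftrightarrow(2)\Rightarrow(3)\Rightarrow(1)$: it obtains (3a) by specializing (2) to $\lambda_n=\omega^\alpha\cdot n$, obtains (3b) by applying (2) to increasing sequences lying inside a single block $(\omega^\alpha\cdot n,\omega^\alpha\cdot(n+1)]$ (via Proposition~\ref{prop:gwiazdka} for that block), and then closes the loop with the derived-set inclusion $A^d\subseteq\bigcup_{n<n_0}(A\cap D_n)^d\cup\{\omega^{\alpha+1}\}$. Your route handles $(1)\Leftrightarrow(3)$ in a single stroke through the exact clopen decomposition $A^d=\bigsqcup_n(A\cap D_n)^d\sqcup(A^d\cap\{\omega^{\alpha+1}\})$, never invoking condition (2); this is a bit more symmetric and makes the topological role of the blocks (their clopenness) explicit, whereas the paper's route is more economical in that it reuses the already-established sequence characterization (2) to reach (3b). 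Both arguments rest on the same ingredients---Lemma~\ref{lem:conv-ideal-via-derivative-set} and the block partition---so the difference is one of logical packaging rather than substance.
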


\begin{proof}
$(1)\iff(2)$ This is proved in Proposition~\ref{prop:gwiazdka}.

$(2)\implies(2a)$ Take $\lambda_n=\omega^\alpha\cdot n$ and apply item (2).

$(2)\implies(2b)$ Let  $n\in \omega$. Then we obtain item (3b) by applying  item (2) to every increasing sequence $(\lambda_k)_{k\in \omega}$ in  $(\omega^\alpha\cdot n, \omega^\alpha\cdot (n+1)]$.

$(3)\implies(1)$
Let $n_0$ be such that 
$A\cap (\omega^\alpha\cdot n, \omega^\alpha\cdot (n+1)]$ is finite for every $n\geq n_0$.
Then
$$A^d \subseteq \bigcup_{n<n_0} (A\cap (\omega^\alpha\cdot n, \omega^\alpha\cdot (n+1)])^d \cup \{\omega^{\alpha+1}\}.$$
Since  
$A\cap (\omega^\alpha\cdot n, \omega^\alpha\cdot (n+1)] \in \conv((\omega^\alpha\cdot n, \omega^\alpha\cdot (n+1)])$
for every $n<n_0$,
we obtain that $A^d$ 
is finite, hence $A\in \conv_{\alpha+1}$.
\end{proof}

Since $\omega^\alpha+1$ and $(\omega^\alpha\cdot n,\omega^\alpha\cdot (n+1)]$ are order isomorphic, we obtain that the ideals $\conv_\alpha$ and $\conv((\omega^\alpha\cdot n,\omega^\alpha\cdot (n+1)])$ are isomorphic for every $n$.
Then we can use  item (3) of the above proposition to  obtain an ideal isomorphic to $\conv_{\alpha+1}$.

\begin{corollary}
\label{cor:conv-alpha-as-products}
The ideal $\conv_{\alpha+1}$ is isomorphic to the ideal 
$$\left(\Fin\otimes \Fin(\omega^\alpha+1)\right)\cap \left( \{\emptyset\}\otimes \conv_\alpha \right).$$
\end{corollary}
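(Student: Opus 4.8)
The plan is to read the isomorphism straight off item~(3) of the preceding proposition, which already splits membership in $\conv_{\alpha+1}$ into one ``$\Fin\otimes\Fin$-type'' clause (3a) and one ``$\{\emptyset\}\otimes\conv_\alpha$-type'' clause (3b). Write $I_n=(\omega^\alpha\cdot n,\omega^\alpha\cdot(n+1)]$ for $n\in\omega$. Each $I_n$ is order isomorphic to $\omega^\alpha+1$, so fix order isomorphisms $g_n\colon I_n\to\omega^\alpha+1$; as noted just before the statement, each $g_n$ witnesses $\conv(I_n)\approx\conv_\alpha$, i.e.\ $B\in\conv(I_n)\iff g_n[B]\in\conv_\alpha$ for $B\subseteq I_n$. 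I would then build a bijection $h\colon\omega^{\alpha+1}+1\to\omega\times(\omega^\alpha+1)$ that sends $I_n$ onto the column $\{n\}\times(\omega^\alpha+1)$ by $x\mapsto(n,g_n(x))$ for $n\geq1$, so that the $n$-th vertical section of $h[A]$ is exactly $g_n[A\cap I_n]$.

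The two points of $\omega^{\alpha+1}+1$ lying in no $I_n$ are $0$ and the top point $\omega^{\alpha+1}$, since $\bigcup_n I_n=(0,\omega^{\alpha+1})$. I would absorb both into the $0$-th column: route $0$ and $\omega^{\alpha+1}$ to two isolated points of $\{0\}\times(\omega^\alpha+1)$ and map $I_0$ onto the remaining part of that column by an order isomorphism, using that deleting finitely many isolated points from $\omega^\alpha+1$ leaves a space of order type $\omega^\alpha+1$. The point justifying this is that both $\conv_\alpha$ and each $\conv(I_n)$ contain all finite sets and are ideals, so relocating these two exceptional points as isolated points alters neither the finiteness of a section nor its membership in $\conv_\alpha$. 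Consequently the $0$-th section of $h[A]$ lies in $\conv_\alpha$ iff $A\cap I_0\in\conv(I_0)$, and it is finite iff $A\cap I_0$ is, up to those two points.

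Finally I would match the clauses. For $n\geq1$ the $n$-th section of $h[A]$ is $g_n[A\cap I_n]$, which is finite iff $A\cap I_n$ is finite and lies in $\conv_\alpha$ iff $A\cap I_n\in\conv(I_n)$; combined with the $0$-th section handled above, this gives: $h[A]$ has all but finitely many sections finite $\iff$ condition~(3a) (the lone possibly-infinite $0$-th section is irrelevant to ``all but finitely many''), which is membership in $\Fin\otimes\Fin(\omega^\alpha+1)$; and every section of $h[A]$ lies in $\conv_\alpha$ $\iff$ condition~(3b), which is membership in $\{\emptyset\}\otimes\conv_\alpha$. Since (3a) and (3b) together are equivalent to $A\in\conv_{\alpha+1}$, the bijection $h$ is the desired isomorphism. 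The only genuinely delicate step, and the only place the argument is not purely mechanical, is the bookkeeping for the leftover points $0$ and $\omega^{\alpha+1}$: one must verify that rerouting them corrupts neither clause, which is exactly where the finite-modification invariance of the $\conv$ ideals is used.
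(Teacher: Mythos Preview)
Your proposal is correct and follows essentially the same approach as the paper: read the isomorphism off item~(3) of the preceding proposition together with the observation that each interval $(\omega^\alpha\cdot n,\omega^\alpha\cdot(n+1)]$ is order isomorphic to $\omega^\alpha+1$, so $\conv(I_n)\approx\conv_\alpha$. The paper's own proof is a one-line reference to exactly these two facts and does not spell out the bijection or address the leftover points $0$ and $\omega^{\alpha+1}$; your write-up is simply a more careful execution of the same idea.
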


We finish this section with a proposition which  shows that the results from Section~\ref{sec:proofs} extend Theorem~\ref{thm:critical-for-finite-boring-all}.

\begin{proposition}
\label{prop:conv-for-1-2-3}
The ideals $\conv_2$ and $\fin\otimes \fin$ ($\conv_3$ and $\BI$, resp.)
are isomorphic. 
\end{proposition}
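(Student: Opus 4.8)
The plan is to treat the two isomorphisms separately, proving $\conv_2 \approx \fin\otimes\fin$ by hand and then deducing $\conv_3 \approx \BI$ from it by lifting the resulting bijection fiberwise and invoking Corollary~\ref{cor:conv-alpha-as-products}.

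First I would establish a partition description of $\conv_2$. Writing $P_n = [\omega\cdot n, \omega\cdot(n+1))$ for $n\in\omega$, the sets $P_n$ are pairwise disjoint, each of order type $\omega$, and $\omega^2+1 = \{\omega^2\}\cup\bigcup_{n\in\omega}P_n$. By Lemma~\ref{lem:conv-ideal-via-derivative-set} we have $A\in\conv_2\iff A^d$ is finite; since every accumulation point of $A$ in $\omega^2+1$ is either $\omega^2$ or one of the limit ordinals $\omega\cdot(n+1)$, and the latter lies in $A^d$ exactly when $A\cap P_n$ is infinite, this yields $A\in\conv_2 \iff \{n\in\omega : A\cap P_n \text{ is infinite}\}$ is finite. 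On the other side, $B\in\fin\otimes\fin \iff \{i\in\omega : B_{(i)}\text{ is infinite}\}$ is finite, where $B_{(i)}=\{j:(i,j)\in B\}$. Both underlying sets are countably infinite and partitioned into countably many infinite pieces (the $P_n$ on one side, the columns $\{i\}\times\omega$ on the other), so I can choose a bijection $\phi:\omega^2+1\to\omega\times\omega$ carrying $P_n$ onto $\{n\}\times\omega$ for each $n\geq 1$ and carrying $P_0\cup\{\omega^2\}$ onto $\{0\}\times\omega$. Since the lone point $\omega^2$ cannot change the finiteness of any piece, $\phi$ witnesses $\conv_2\approx\fin\otimes\fin$.

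For $\conv_3\approx\BI$ I would combine three inputs. Corollary~\ref{cor:conv-alpha-as-products} (with $\alpha=2$) gives $\conv_3\approx(\fin\otimes\fin(\omega^2+1))\cap(\{\emptyset\}\otimes\conv_2)$, while the excerpt records $\BI=(\{\emptyset\}\otimes\fin^2)\cap(\fin\otimes\fin(\omega^2))$; both are ideals on a product with base $\omega$. Let $h:\omega^2+1\to\omega\times\omega$ be the bijection from the first part, so $B\in\conv_2\iff h[B]\in\fin^2$, and set $H=\mathrm{id}_\omega\times h:\omega\times(\omega^2+1)\to\omega\times(\omega\times\omega)$. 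For every $A$ the $i$-th section of $H[A]$ equals $h[A_{(i)}]$; since $h$ is a bijection it preserves finiteness of sections, giving $A\in\fin\otimes\fin(\omega^2+1)\iff H[A]\in\fin\otimes\fin(\omega^2)$, and since $h$ witnesses $\conv_2\approx\fin^2$ on each section, $A\in\{\emptyset\}\otimes\conv_2\iff H[A]\in\{\emptyset\}\otimes\fin^2$. Intersecting the two equivalences shows that $H$ transports $(\fin\otimes\fin(\omega^2+1))\cap(\{\emptyset\}\otimes\conv_2)$ onto $\BI$, hence $\conv_3\approx\BI$.

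The only genuinely delicate points are bookkeeping. In the first part one must check that the extra top ordinal $\omega^2$ does not disturb the ``finitely many infinite pieces'' count, and in the second part one must verify that a single fiberwise bijection $H$ can respect both conjuncts of the intersection at once. The latter is what makes the argument go through cleanly: the base coordinate is left untouched, so the $\fin$- and $\{\emptyset\}$-parts are matched automatically, while $h$ simultaneously preserves finiteness of fibers (needed for the $\fin\otimes\fin$ conjunct) and carries $\conv_2$ to $\fin^2$ on each fiber (needed for the $\{\emptyset\}\otimes(\cdot)$ conjunct). I expect no serious obstacle beyond making these two verifications precise.
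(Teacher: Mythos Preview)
Your proposal is correct and follows essentially the same route as the paper: the paper's one-line proof simply says to apply Corollary~\ref{cor:conv-alpha-as-products} together with the identity $\BI=(\{\emptyset\}\otimes\fin^2)\cap(\Fin\otimes\Fin(\omega^2))$, and you have unpacked exactly what that application entails---the fiberwise bijection $H=\mathrm{id}_\omega\times h$ that you construct is precisely the implicit step the paper omits. Your direct treatment of $\conv_2$ via Lemma~\ref{lem:conv-ideal-via-derivative-set} rather than via the corollary is arguably cleaner, since the paper's convention ``$\alpha\geq 2$'' makes the corollary formally inapplicable at $\alpha=1$, but the underlying computation is the same.
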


\begin{proof}
Apply Corollary~\ref{cor:conv-alpha-as-products}
and the fact that 
$\BI=(\{\emptyset\}\otimes\fin^2)\cap(\Fin\otimes \Fin(\omega^2))$.
\end{proof}


\section{Proofs of the main theorems}
\label{sec:proofs}

\begin{lemma}
\label{lem:P-like-property}
Let  $\I$ be an ideal on $\omega$.
If $f:\omega \to \omega^{\alpha}$ is a function such that $f^{-1}(\xi)\in \I$ for each $\xi\in \omega^\alpha$, then for any set $B\subseteq \omega$ such that $ B\notin \I$
    there exists an increasing sequence $(\lambda_n)_{n\in \omega}$ in $\omega^{\alpha}$ such that 
    \begin{enumerate}
        \item $B\cap f^{-1}[[\lambda_n, \lambda_{n+1})]\in \I$ for each $n$, and 
        \item $B\cap f^{-1}[[\lambda_0,\lambda)]\notin \I$, where $\lambda = \sup_n \lambda_n$.
    \end{enumerate}
    \end{lemma}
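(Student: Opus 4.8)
The plan is to locate a canonical ``threshold'' ordinal at which the part of $B$ lying below it escapes the ideal, and then to read the desired sequence off its cofinality. For $\mu\leq\omega^\alpha$ set
$$C(\mu)=B\cap f^{-1}[[0,\mu)].$$
This family is monotone increasing in $\mu$, with $C(0)=\emptyset\in\I$ and $C(\omega^\alpha)=B\notin\I$. Since the ordinals are well-ordered, there is a least ordinal $\mu^*$ with $C(\mu^*)\notin\I$, and $0<\mu^*\leq\omega^\alpha$.

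First I would show that $\mu^*$ is a limit ordinal. It is nonzero by the above. If it were a successor, say $\mu^*=\nu+1$, then minimality of $\mu^*$ gives $C(\nu)\in\I$, while
$$C(\nu+1)=C(\nu)\cup\left(B\cap f^{-1}(\nu)\right),$$
and $B\cap f^{-1}(\nu)\subseteq f^{-1}(\nu)\in\I$ by the fiber hypothesis; hence $C(\mu^*)=C(\nu+1)$ would be a union of two members of $\I$, contradicting $C(\mu^*)\notin\I$. This is the only place the assumption $f^{-1}(\xi)\in\I$ enters, and it is exactly what forces $\mu^*$ past every successor stage.

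Since $\mu^*$ is a limit ordinal and $\omega^\alpha$ is countable, $\mu^*$ is a countable limit ordinal and therefore has cofinality $\omega$. I would thus fix a strictly increasing sequence cofinal in $\mu^*$ and prepend $\lambda_0=0$, obtaining an increasing sequence $(\lambda_n)_{n\in\omega}$ in $\omega^\alpha$ with $\sup_n\lambda_n=\mu^*=\lambda$. For condition~(1), each slab satisfies $B\cap f^{-1}[[\lambda_n,\lambda_{n+1})]\subseteq C(\lambda_{n+1})$, and $\lambda_{n+1}<\mu^*$ because the sequence is strictly increasing with supremum the limit ordinal $\mu^*$; by minimality of $\mu^*$ we get $C(\lambda_{n+1})\in\I$, so the slab lies in $\I$. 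For condition~(2), $B\cap f^{-1}[[\lambda_0,\lambda)]=B\cap f^{-1}[[0,\mu^*)]=C(\mu^*)\notin\I$.

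I expect no serious obstacle, since the argument is entirely order-theoretic and uses only that $\I$ is closed under finite unions and subsets. The one point requiring care is the case analysis on $\mu^*$: ruling out the successor case is precisely where the fiber hypothesis is needed, and the remainder rests on the elementary fact that every countable limit ordinal has cofinality $\omega$.
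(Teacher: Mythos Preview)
Your proof is correct and in fact cleaner than the paper's. The paper proceeds by transfinite induction on $\alpha$: in the base case $\alpha=1$ it takes $\lambda_n=n$; for $\alpha=\delta+1$ it checks whether every block $B\cap f^{-1}[[\omega^\delta\cdot n,\omega^\delta\cdot(n+1))]$ lies in $\I$ (if so, $\lambda_n=\omega^\delta\cdot n$ works; if not, recurse into the offending block); and similarly for limit $\alpha$ using a cofinal sequence $\omega^{\alpha_n}$. Your argument bypasses the induction entirely by taking $\mu^*$ to be the least ordinal with $C(\mu^*)\notin\I$ and observing that the fiber hypothesis forces $\mu^*$ to be a limit ordinal, after which a cofinal $\omega$-sequence in $\mu^*$ does the job directly. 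This is more conceptual: it isolates exactly the role of the hypothesis $f^{-1}(\xi)\in\I$ (it is what rules out the successor case for $\mu^*$) and shows the result really has nothing to do with the particular structure of $\omega^\alpha$ beyond countability. The paper's approach, by contrast, always produces sequences of a canonical shape ($\omega^\delta\cdot n$ or $\omega^{\alpha_n}$), though this extra information is not used downstream. One cosmetic point: when you prepend $\lambda_0=0$, just make sure the cofinal sequence you chose starts above $0$ so that the resulting sequence is strictly increasing, which is what the lemma intends.
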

\begin{proof}
We prove the lemma by induction on $\alpha$.
For $\alpha=1$, the sequence $\lambda_n=n$ works.
Assume the lemma holds for any $\beta<\alpha$.

\emph{Successor case:} $\alpha$ is a successor ordinal, so $\alpha=\delta+1$.

If $B\cap f^{-1}[[\omega^{\delta}\cdot n,\omega^{\delta}\cdot (n+1))]\in \I$ for each $n<\omega$, the sequence $\lambda_n = \omega^{\delta}\cdot n$ works.
Now, assume there is $n<\omega$ with $C = B\cap f^{-1}[[\omega^{\delta}\cdot n,\omega^{\delta}\cdot (n+1))]\notin \I$.
Since $[\omega^{\delta}\cdot n,\omega^{\delta}\cdot (n+1))$ is order isomorphic to $\omega^\delta$, we can apply the inductive hypothesis to obtain an increasing sequence 
 $(\lambda_n)_{n\in \omega}$ in $[\omega^{\delta}\cdot n,\omega^{\delta}\cdot (n+1))$ such that 
 $B\cap f^{-1}[[\lambda_n, \lambda_{n+1})] = C\cap f^{-1}[[\lambda_n, \lambda_{n+1})]\in \I$ for each $n$, and 
 $B\cap f^{-1}[[\lambda_0,\lambda)] = C\cap f^{-1}[[\lambda_0,\lambda)]\notin \I$, where $\lambda = \sup_n \lambda_n$. Hence the proof is finished in this case.

\emph{Limit case:} $\alpha$ is a limit ordinal, so there is an increasing sequence  $(\alpha_n)_{n<\omega}$  in $\alpha$ such that  $\alpha_0=0$ and $\alpha=\sup_n\alpha_n$.

If $B\cap f^{-1}[[1,\omega^{\alpha_{n}})]\in \I$ for each $n<\omega$, the sequence $\lambda_n = \omega^{\alpha_n}$ works.
Now, assume there is $n<\omega$ with 
$C= B\cap f^{-1}[[1,\omega^{\alpha_{n}})]\notin \I$
If we apply the inductive hypothesis to the set $C$, we can obtain an increasing sequence 
 $(\lambda_n)_{n\in \omega}$ in $[1,\omega^{\alpha_n})$ such that 
 $B\cap f^{-1}[[\lambda_n, \lambda_{n+1})] = C\cap f^{-1}[[\lambda_n, \lambda_{n+1})]\in \I$ for each $n$, and 
 $B\cap f^{-1}[[\lambda_0,\lambda)] = C\cap f^{-1}[[\lambda_0,\lambda)]\notin \I$, where $\lambda = \sup_n \lambda_n$. Hence the proof is finished in this case.
\end{proof}

\begin{theorem}
\label{thm:char-conv}
     For any ideal $\I$ on $\omega$,   
    $$\conv_{1+\alpha}\not\leq_K \I \iff \omega^{\alpha}+1 \in \FinBW(\I).$$ 
In particular, if $\alpha$ is infinite, then 
    $\conv_{\alpha}\not\leq_K \I \iff \omega^{\alpha}+1 \in \FinBW(\I).$
\end{theorem}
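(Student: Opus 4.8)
The plan is to prove the biconditional $\conv_{1+\alpha}\not\leq_K \I \iff \omega^{\alpha}+1 \in \FinBW(\I)$ by establishing the two directions separately, with the contrapositive being the natural form for each. For the direction $\omega^\alpha+1\in\FinBW(\I)\implies \conv_{1+\alpha}\not\leq_K\I$, I would argue by contraposition: assume $\conv_{1+\alpha}\leq_K\I$ via a witnessing function $f\colon\omega\to\omega^\alpha+1$ (identifying the underlying set of $\conv_{1+\alpha}=\conv(\omega^\alpha+1)$ with $\omega^\alpha+1$), and produce a sequence in $\omega^\alpha+1$ with no $\I$-positive convergent subsequence. The sequence is simply $(f(n))_{n\in\omega}$, viewed as a sequence of points of $\omega^\alpha+1$. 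If some $A\notin\I$ made $(f(n))_{n\in A}$ convergent, then its range would lie (up to a single limit point) in a set belonging to $\conv_{1+\alpha}$, and then $f^{-1}$ of that $\conv_{1+\alpha}$-set would contain $A$ and hence be $\I$-positive, contradicting $f^{-1}[C]\in\I$ for every $C\in\conv_{1+\alpha}$. This direction is essentially the easy ``soft'' half and uses little beyond the definitions of $\leq_K$ and $\FinBW$.

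The substantive direction is $\conv_{1+\alpha}\not\leq_K\I\implies \omega^\alpha+1\in\FinBW(\I)$, again best done contrapositively: assuming $\omega^\alpha+1\notin\FinBW(\I)$, I must build a Kat\v{e}tov reduction $f\colon\omega\to\omega^\alpha+1$ witnessing $\conv_{1+\alpha}\leq_K\I$. The failure of $\FinBW$ gives a sequence $(x_n)_{n\in\omega}$ in $\omega^\alpha+1$ such that no $\I$-positive subsequence converges; I would take $f(n)=x_n$ as the candidate reduction and must verify that $f^{-1}[C]\in\I$ for every $C\in\conv_{1+\alpha}$. The key point is that if $f^{-1}[C]\notin\I$ for some $C\in\conv_{1+\alpha}$, then since $C$ is covered by finitely many convergent sequences (equivalently, $C^d$ is finite by Lemma~\ref{lem:conv-ideal-via-derivative-set}), one of these convergent pieces must pull back to an $\I$-positive set, yielding an $\I$-positive convergent subsequence of $(x_n)$ and contradicting the choice of the sequence. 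A subtlety to handle is the case where $f$ is not injective or where infinitely many $x_n$ coincide; then $f^{-1}(\xi)$ may itself be $\I$-positive for some $\xi$, but a constant sequence trivially converges, so such a fiber already gives a forbidden convergent $\I$-positive subsequence unless every fiber $f^{-1}(\xi)\in\I$.

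This last observation is exactly where Lemma~\ref{lem:P-like-property} enters and is, I expect, the main obstacle. Once we know every fiber $f^{-1}(\xi)\in\I$, the lemma lets us, for any $\I$-positive $B$, extract an increasing sequence $(\lambda_n)$ in $\omega^\alpha$ so that each ``block'' $B\cap f^{-1}[[\lambda_n,\lambda_{n+1})]$ lies in $\I$ while the whole initial segment up to $\lambda=\sup_n\lambda_n$ is $\I$-positive. Translating through Proposition~\ref{prop:gwiazdka}, this block structure is precisely what forces the range of the corresponding subsequence to avoid membership in $\conv_{1+\alpha}$ on an $\I$-positive set, or conversely lets one locate a genuine convergent $\I$-positive subsequence converging to the point $\lambda$. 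The delicate bookkeeping is reconciling the indexing shift ``$1+\alpha$'' (so that the $\alpha=0$ base case corresponds to $\omega^0+1$, a convergent sequence, matching $\conv_1$) with the inductive structure of Lemma~\ref{lem:P-like-property}, which is stated for $\omega^\alpha$ rather than $\omega^\alpha+1$.

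Finally, the ``In particular'' clause is immediate: when $\alpha$ is infinite we have $1+\alpha=\alpha$, so $\conv_{1+\alpha}=\conv_\alpha$ and the stated equivalence specializes directly. I would note this in one line after completing the main biconditional, with no further argument required.
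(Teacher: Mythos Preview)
Your proposal rests on a misreading of the definition: $\conv_{1+\alpha}$ is \emph{not} $\conv(\omega^\alpha+1)$; by definition it is $\conv(\omega^{1+\alpha}+1)$, an ideal living on the larger ordinal $\omega^{1+\alpha}+1=\omega\cdot\omega^\alpha+1$. So a Kat\v{e}tov witness for $\conv_{1+\alpha}\leq_K\I$ is a map $f\colon\omega\to\omega^{1+\alpha}+1$, and it cannot be used directly as a sequence in $\omega^\alpha+1$; conversely, a bad sequence $(x_n)$ in $\omega^\alpha+1$ is a map into the wrong space to witness $\conv_{1+\alpha}\leq_K\I$. Your outline, with this identification, would instead be arguing that $\conv_\alpha\not\leq_K\I\iff\omega^\alpha+1\in\FinBW(\I)$. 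The forward implication of that statement is \emph{false}: taking $\I=\conv_{\alpha+1}$ for finite $\alpha\geq 2$, one has $\conv_\alpha\not\leq_K\conv_{\alpha+1}$ by Corollary~\ref{cor:Katetov-between-conv}, yet $\omega^\alpha+1\notin\FinBW(\conv_{\alpha+1})$ by the very theorem under discussion (since $\conv_{1+\alpha}=\conv_{\alpha+1}\leq_K\conv_{\alpha+1}$ trivially). So the ``$1+$'' shift is not bookkeeping---it is the content.

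Concretely, the gap in your ``substantive direction'' is the step ``one of these convergent pieces must pull back to an $\I$-positive set, yielding an $\I$-positive convergent subsequence'': even if $A=f^{-1}[S]\notin\I$ with $S$ the range of a single convergent sequence, $f\restriction A$ need not converge, because $f$ may hit each point of $S$ along an $\I$-small but infinite fiber, and there is no way to thin $A$ to an $\I$-positive set on which $f$ converges. The paper fixes exactly this by exploiting the extra factor of $\omega$ in $\omega^{1+\alpha}$: in the $(\Rightarrow)$ direction it \emph{spreads} the sequence, sending each fiber $f^{-1}(\xi)$ injectively into the interval $[\omega\cdot\xi,\omega\cdot(\xi+1))\subseteq\omega^{1+\alpha}$, so that applying $\conv_{1+\alpha}\not\leq_K\I$ and Proposition~\ref{prop:gwiazdka} yields genuine \emph{finiteness} (not merely $\I$-smallness) of the blocks, which together with Lemma~\ref{lem:P-like-property} produces a convergent $\I$-positive subsequence. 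In the $(\Leftarrow)$ direction the paper goes the other way, \emph{collapsing} each interval $[\omega\cdot\xi,\omega\cdot(\xi+1))$ to the single point $\xi$ to pass from the Kat\v{e}tov witness into $\omega^{1+\alpha}+1$ down to a sequence in $\omega^\alpha+1$. Your use of Lemma~\ref{lem:P-like-property} is in the right spirit, but without this spreading/collapsing mechanism the argument cannot close.
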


\begin{proof}

($\implies$) Let $f: \omega \to  \omega^{\alpha}+1$. We need to find $D\notin \I$ such that the subsequence $f\restriction D$ is convergent.

If there exists $\xi \in \omega^{\alpha}+1$ such that $D = f^{-1}(\xi)\notin \I$, then 
$f\restriction D$ is convergent to $\xi$, so we are done.
Now, suppose $f^{-1}(\xi) \in \I$ for each  $\xi \in \omega^{\alpha}+1$.   

We define
$A_{\xi}=f^{-1}(\xi)$ 
for each $\xi \in \omega^{\alpha}+1$, 

Let $g: \omega \to \omega^{1+\alpha}+1$ be a function such that 
\begin{enumerate}
    \item 
$g[A_{\xi}] \subseteq [\omega \cdot \xi, \omega \cdot (\xi +1))$ for each $\xi < \omega^{\alpha}$, 
\item $g[A_{\omega^\alpha}] \subseteq \{\omega^{1+\alpha}\}$, and 
\item $g\restriction \omega \setminus A_{\omega^{\alpha}}$ is injective.
\end{enumerate}

Since $\conv_{1+\alpha}\nleq_K \I$, there exists a set  $B\notin \I$ such that $g[B]\in \conv_{1+\alpha}$. By Lemma~\ref{lem:P-like-property}, there exists an increasing sequence $(\lambda_n)_{n\in \omega}$ in $\omega^{\alpha}$ such that $ B\cap f^{-1}[[\lambda_n, \lambda_{n+1})]\in \I$ for each $n$ and $C=B\cap f^{-1}[[\lambda_0,\lambda)]\notin \I$, where $\lambda = \sup_n \lambda_n$. 
Since $(\lambda_n)_{n\in\omega}$ is an increasing sequence in $ \omega^{\alpha}$,
the sequence $(\omega \cdot \lambda_n)_{n\in\omega}$ is  increasing  in 
$\omega^{1+\alpha}$, 
so by Proposition~\ref{prop:gwiazdka},  there is $n_0\in \omega$ such that 
$g[B]\cap [\omega \cdot \lambda_n, \omega \cdot \lambda_{n+1})$ is finite for each $n\geq n_0$.
Since $g$ is injective on $\omega \setminus A_{\omega^{\alpha}}$, the set 
$$
B \cap g^{-1}[[\omega\cdot \lambda_n, \omega\cdot \lambda_{n+1})]
=
B \cap \bigcup_{\lambda_n \leq \xi < \lambda_{n+1}} A_{\xi}
$$ 
is also finite for each $n\geq n_0$. 

Let $$D=C \setminus \bigcup_{n<n_0} 
\left(\bigcup_{\lambda_n \leq \xi < \lambda_{n+1}} A_{\xi}\right).$$
Since   $C\notin I$ 
and for each $n$ 
$$C\cap \bigcup_{\lambda_n \leq \xi < \lambda_{n+1}} A_{\xi} = B\cap  f^{-1}[[\lambda_n,\lambda_{n+1})] \in \I,$$
 we conclude that $D\notin \I$. 
Once we show  that $f\restriction D$ is convergent to $\lambda$, the proof  will be finished.

Let $U$ be an open neighborhood of $\lambda$. Then there exists some $k_0$ such that $(\lambda_{k_0}, \lambda] \subset U.$ 
Notice that  
\begin{equation*}
    \begin{split}       
\{ n \in D : f(n) \notin U \}
&\subseteq 
\{n \in D : f(n) \leq \lambda_{k_0} \} 
\\&\subseteq   
D \cap \bigcup_{n<k_0} \left(\bigcup_{\lambda_n \leq \xi < \lambda_{n+1}} A_{\xi}\right)
=
\bigcup_{n<k_0}\left(D \cap \bigcup_{\lambda_n \leq \xi < \lambda_{n+1}} A_{\xi}\right).
    \end{split}
\end{equation*}
Since the set $D \cap \bigcup_{\lambda_n \leq \xi < \lambda_{n+1}} A_{\xi} $ is finite for each $n$, we get that $\{ n \in D : f(n) \notin U \}$ is finite as well.

\smallskip 

($\impliedby$) 
Suppose that $\conv_{1+\alpha} \leq_K \I $ and let $f: \omega \to  \omega^{1+\alpha}+1$ be a witnessing function, meaning that $B\notin \I $ implies that $ f[B]\notin \conv_{1+\alpha}$.  
Let $g:\omega^{1+\alpha}+1 \to  \omega^{\alpha}+1 $ be a function such that $g(\omega^{\alpha+1})=\omega^{\alpha}$ and 
$$g^{-1}(\xi)=[\omega \cdot \xi, \omega \cdot (\xi + 1))) \quad \text{ for each $\xi \in \omega^{\alpha}$.}$$
Let $h=g \circ f$.
 Then $h: \omega \to  \omega^{\alpha} +1 $, so 
once we show that the subsequence $h\restriction A$ is not convergent for any $A\notin \I$, 
the proof will be finished.
Suppose for the sake of contradiction that there exists some $A\notin \I $ such that $h\restriction A $ is convergent to some $\beta \in \omega^{\alpha} +1$.

\emph{Case (1).} $\beta $ is a successor ordinal or $\beta=0$.

Let $U=\{\beta\}$. As $U$ is a neighbourhood of $\beta$, there exists a finite set $K\subset \omega$ such that $h[A\setminus K]=\{\beta\}.$ Then $f[A\setminus K]\subset g^{-1}(\beta)=[\omega \cdot \beta, \omega \cdot \beta + \omega)$. But then $f[A\setminus K]^{d} \subset [\omega \cdot \beta, \omega \cdot \beta + \omega)^{d}=\{\omega \cdot \beta + \omega\}$. Then $f[A]\in \conv_{\alpha+1}$ (by Lemma ~\ref{lem:conv-ideal-via-derivative-set}), a contradiction with $f$ being the witness for $\conv_{1+\alpha} \leq_K \I $.

\emph{Case (2).} $\beta $ is a nonzero limit ordinal. 

Let $(\lambda_n)_{n<\omega}$  be any increasing sequence  in $\beta$ such that $\beta = \sup_n\lambda_n$.
We claim that the set  $f[A]\cap[\omega \cdot \lambda_n, \omega \cdot \lambda_{n+1})$ is finite for all $n$.

Suppose for the sake of contradiction that $f[A]\cap[\omega \cdot \lambda_n, \omega \cdot \lambda_{n+1})$ is infinite for some $n$. Then the set $A\cap f^{-1}[[\omega \cdot \lambda_n, \omega \cdot \lambda_{n+1})]$ is infinite. The set $V=(\lambda_{n+1},\beta]$ is an open neighborhood of $\beta$, but  
$$A\cap f^{-1}[[\omega \cdot \lambda_n, \omega \cdot \lambda_{n+1})]\subset \{i \in A : h(i)\notin V\},$$ 
which leads to a contradiction with $\beta$ being the limit of the subsequence $h\restriction A$. 

If we show that $f[A]^{d}$ is finite, then $f[A]\in \conv_{\alpha+1}$ (by Proposition~\ref{lem:conv-ideal-via-derivative-set}), which will lead to a contradiction with $f$ being the witness for $\conv_{\alpha+1} \leq_K \I$.

Since 
$$f[A]^d = 
(f[A]\cap [0,\omega \cdot \beta])^{d} 
\cup 
(f[A]\cap [\omega \cdot \beta,\omega \cdot (\beta+1)])^{d} 
\cup 
(f[A]\cap [\omega \cdot (\beta+1),\omega^{1+\alpha} ])^{d},$$
it is enough to show that sets from the above union are finite.

First, we show that $(f[A]\cap [0,\omega \cdot \beta])^{d}\subset \{\omega\cdot \beta\}.$
Take any $\gamma \in [0,\omega \cdot \beta)$. Then $\gamma < \omega \cdot \lambda_k$ for some $k$. Let $U=[0, \omega \cdot \lambda_k)$. Then $U$ is an open set containing $\gamma $. Moreover, $f[A]\cap U= \bigcup_{n<k} f[A]\cap[\omega \cdot \lambda_n, \omega \cdot \lambda_{n+1})$ which is finite. Hence $\gamma$ is not an accumulation point of $f[A]$.

Second, we observe that 
$$
(f[A]\cap [\omega \cdot \beta,\omega \cdot (\beta+1)])^{d} 
\subseteq [\omega \cdot \beta,\omega \cdot (\beta+1)]^{d} = \{\omega\cdot(\beta+1)\}. 
$$

Third, we show  that $(f[A]\cap [\omega \cdot (\beta+1),\omega^{1+\alpha} ])^{d}
=\emptyset$. 
Let $K=\{n \in A : h(n) > \beta \}$. Since $U=[0, \beta]$ is an open neighborhood of $\beta$, $K $ must be finite, so the set $f[A]\cap [\omega \cdot (\beta+1),\omega^{1+\alpha}] $ is finite as well, and consequently its derived set is empty.
\end{proof}

The \emph{disjoint union of topological spaces} $X_i$ for $i<n$ ($n<\omega$) is a topological space on the set 
$$\bigsqcup_{i<n} X_i = \bigcup_{i<n} X_i\times \{i\}$$
with open sets of the form 
$$\bigcup_{i<n} U_i\times\{i\} \quad \text{ for any open sets $U_i$ in $X_i$.}$$

\begin{lemma} 
\label{lem:disjoint-union-vs-FinBW}
    If $X\in \FinBW(\I)$, then $\bigsqcup_{i<n} X \in \FinBW(\I)$ for any $n<\omega$.
\end{lemma}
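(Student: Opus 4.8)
The plan is to reduce the whole question to a single application of the hypothesis $X\in\FinBW(\I)$ by pushing the given sequence forward along the canonical projection that forgets the copy label, and then to recover a subsequence concentrated in one copy by a finite pigeonhole argument on the resulting $\I$-positive index set. Let $\pi:\bigsqcup_{i<n}X\to X$ be the map $\pi(x,i)=x$. I would first record that $\pi$ is continuous, since for open $U\subseteq X$ we have $\pi^{-1}[U]=\bigcup_{i<n}U\times\{i\}$, which is open in the disjoint union; moreover, for each fixed $i<n$ the restriction $\pi\restriction(X\times\{i\})$ is a homeomorphism of the clopen subspace $X\times\{i\}$ onto $X$.

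Given an arbitrary sequence $(z_k)_{k\in\omega}$ in $\bigsqcup_{i<n}X$, I would set $w_k=\pi(z_k)\in X$ and $P_i=\{k\in\omega:z_k\in X\times\{i\}\}$ for $i<n$. Applying $X\in\FinBW(\I)$ to $(w_k)_{k\in\omega}$ yields a set $A\notin\I$ and a point $w\in X$ with $(w_k)_{k\in A}\to w$. Since $A=\bigcup_{i<n}(A\cap P_i)$ is a finite union and $\I$ is closed under finite unions while $A\notin\I$, there must be some $i_0<n$ with $A\cap P_{i_0}\notin\I$. I claim this index set witnesses the property for $(z_k)$: for $k\in A\cap P_{i_0}$ we have $z_k=(w_k,i_0)\in X\times\{i_0\}$, and the restriction of $(w_k)_{k\in A}$ to $A\cap P_{i_0}$ still converges to $w$; transporting this convergence through the homeomorphism $X\cong X\times\{i_0\}$ gives $(z_k)_{k\in A\cap P_{i_0}}\to(w,i_0)$ inside the clopen subspace, and hence in $\bigsqcup_{i<n}X$.

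The step I expect to require the most care is precisely the last one: guaranteeing that the extracted convergent subsequence lives in a \emph{single} copy on an $\I$-positive set. A naive attempt---fixing one copy $P_{i_0}$ in advance, filling the indices outside it with a constant value, and applying $\FinBW(\I)$---fails, because the convergent witness it produces could be supported entirely on the constant (``wrong'') indices $A\setminus P_{i_0}$, which carry no information about the original terms $z_k$. Projecting with the continuous map $\pi$ circumvents this: now \emph{every} index of $A$ genuinely records the $X$-coordinate of a term of $(z_k)$, so the finite pigeonhole over the $n$ copies automatically forces an $\I$-positive block $A\cap P_{i_0}$ on which the already-convergent projected sequence lifts back to a convergent sequence of the disjoint union.

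The remaining verifications are routine: the restriction of a convergent sequence along $A$ to the subset $A\cap P_{i_0}$ converges to the same limit (for each neighborhood $U$ of $w$, the exceptional set shrinks), and convergence within the clopen subspace $X\times\{i_0\}$ is convergence in the ambient space. The edge cases $n=0$ and $X=\emptyset$ are vacuous, as the relevant space then contains no sequences, and the case $n=1$ is immediate from $X\cong X\times\{0\}$; so the argument above covers all $n<\omega$.
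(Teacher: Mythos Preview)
Your proposal is correct and follows essentially the same approach as the paper: project via $\pi$ to $X$, apply $X\in\FinBW(\I)$ to obtain an $\I$-positive convergent set $A$, then use the pigeonhole principle over the $n$ copies to find $i_0$ with $A\cap P_{i_0}\notin\I$ and verify convergence of the original sequence on that set. Your final step, transporting convergence through the homeomorphism $X\cong X\times\{i_0\}$, is slightly cleaner than the paper's direct open-set verification, but the overall strategy is identical.
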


\begin{proof}
    Take any sequence $f:\omega \to \bigsqcup_{i<n} X$ and let $\pi : \bigsqcup_{i<n} X \to  X$ be the projection onto the first coordinate. 
    Then there exists $A\notin \I$ such that the subsequence  $(\pi \circ f)\restriction A$ is convergent to some $p\in X$. 
    Let's observe that $f^{-1}[X\times \{i\}]\cap A \notin I$ for some $i<n$. 
    Let  $C=f^{-1}[X\times \{i\}]\cap A \notin \I$.   We claim that the subsequence $f\restriction C$ is convergent to $(p,i)$. Take any open neighborhood $U$ of $(p,i)$. Then $\pi[U]$ is an open neighborhood of $\pi(p,i) = p$, so there exists some finite set $K $ such that $(\pi \circ f) [A\setminus K] \subseteq \pi [U]$, and  it follows that $f[C\setminus K]\subseteq U$.
\end{proof}

\begin{theorem}
\label{thm:conv-vs-FinBW}
Let  $\I$ be an ideal on $\omega$.
\begin{enumerate}
\item 
$\conv_{1+\alpha} \not\leq_K \I \implies \mathbb{K}_{\alpha} \subseteq \FinBW(\I).$

\item 
$ \conv_{(1+\alpha)+1} \leq_K \I \implies \FinBW(\I)\cap \mathbb{K}\subseteq \mathbb{K}_{\alpha}.$

\item 
$\conv_{(1+\alpha)+1} \leq_K \I \text{ and }  \conv_{1+\alpha} \not\leq_K \I \iff \FinBW(\I)\cap \mathbb{K}=\mathbb{K}_{\alpha}.$
\end{enumerate}
In particular, 
if $\alpha$ is infinite, then 
\begin{enumerate}
\item 
$\conv_{\alpha} \not\leq_K \I \implies \mathbb{K}_{\alpha} \subseteq \FinBW(\I)$.

\item 
$ \conv_{\alpha+1} \leq_K \I \implies \FinBW(\I)\cap \mathbb{K}\subseteq \mathbb{K}_{\alpha}$.

\item 
$\conv_{\alpha+1} \leq_K \I \text{ and }  \conv_{\alpha} \not\leq_K \I \iff \FinBW(\I)\cap \mathbb{K}=\mathbb{K}_{\alpha}$.
\end{enumerate}

\end{theorem}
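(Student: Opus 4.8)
The plan is to reduce every space in $\mathbb{K}_\alpha$ to the single ``block'' $\omega^\alpha+1$, which Theorem~\ref{thm:char-conv} already controls, and then to run the two inclusions at the two levels $\alpha$ and $\alpha+1$. Two structural facts would drive everything. First, since successor ordinals are isolated points, each block $(\omega^\beta\cdot k,\omega^\beta\cdot(k+1)]$ is clopen in $\omega^\beta\cdot n+1$ and is order-isomorphic (hence homeomorphic) to $\omega^\beta+1$; consequently
$$\omega^\beta\cdot n+1\cong\bigsqcup_{i<n}(\omega^\beta+1)\qquad(n\geq 1).$$
Second, I would record the elementary and general fact that $\FinBW(\I)$ is closed under passing to closed subspaces: a subsequence witnessing $X\in\FinBW(\I)$ that lies in a closed set $Y\subseteq X$ must converge to a point of $Y$, hence witnesses $Y\in\FinBW(\I)$. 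Finite spaces belong to every $\FinBW(\I)$ and to every $\mathbb{K}_\alpha$, so I may treat them trivially and assume all spaces in question are infinite.

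For item (1), I would start from $\conv_{1+\alpha}\not\leq_K\I$, which by Theorem~\ref{thm:char-conv} gives $\omega^\alpha+1\in\FinBW(\I)$. For each $\beta\leq\alpha$ we have $\omega^\beta\leq\omega^\alpha$, so $\omega^\beta+1=[0,\omega^\beta]$ is a closed subspace of $\omega^\alpha+1$ and thus lies in $\FinBW(\I)$; Lemma~\ref{lem:disjoint-union-vs-FinBW} applied to the displayed decomposition then yields $\omega^\beta\cdot n+1\in\FinBW(\I)$ for all $n$, and closure under homeomorphism gives $\mathbb{K}_\alpha\subseteq\FinBW(\I)$.

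For item (2), I would take $X\in\FinBW(\I)\cap\mathbb{K}$ infinite and write $X\cong\omega^\gamma\cdot n+1$ (with $\gamma,n\geq 1$) by the Mazurkiewicz--Sierpi\'{n}ski theorem, aiming to show $\gamma\leq\alpha$. If instead $\gamma\geq\alpha+1$ then $\omega^\gamma\cdot n\geq\omega^{\alpha+1}$, so $\omega^{\alpha+1}+1=[0,\omega^{\alpha+1}]$ is a closed subspace of $X$ and hence lies in $\FinBW(\I)$; by Theorem~\ref{thm:char-conv} with $\alpha+1$ in place of $\alpha$ (using $1+(\alpha+1)=(1+\alpha)+1$) this says $\conv_{(1+\alpha)+1}\not\leq_K\I$, contradicting the hypothesis. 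Hence $\gamma\leq\alpha$ and $X\in\mathbb{K}_\alpha$.

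For item (3), the forward implication is just the conjunction of (1) and (2) together with $\mathbb{K}_\alpha\subseteq\mathbb{K}$. For the converse I would assume $\FinBW(\I)\cap\mathbb{K}=\mathbb{K}_\alpha$: then $\omega^\alpha+1\in\mathbb{K}_\alpha\subseteq\FinBW(\I)$, so Theorem~\ref{thm:char-conv} gives $\conv_{1+\alpha}\not\leq_K\I$, and to obtain $\conv_{(1+\alpha)+1}\leq_K\I$ it suffices, again by Theorem~\ref{thm:char-conv}, to show $\omega^{\alpha+1}+1\notin\FinBW(\I)$. Since $\omega^{\alpha+1}+1\in\mathbb{K}$, were it in $\FinBW(\I)$ it would lie in $\mathbb{K}_\alpha$, i.e.\ be homeomorphic to some $\omega^\beta\cdot n+1$ with $\beta\leq\alpha$; this is impossible because $\omega^\gamma\cdot n+1$ has Cantor--Bendixson rank $\gamma+1$ and degree $n$, both topological invariants, so a homeomorphism would force $\beta=\alpha+1$, contradicting $\beta\leq\alpha$. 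I expect the only genuinely external input to be this last point---the uniqueness half of the Mazurkiewicz--Sierpi\'{n}ski classification---while the rest is bookkeeping that must route each Kat\v{e}tov inequality through Theorem~\ref{thm:char-conv} at the right level ($\alpha$ for the containment $\mathbb{K}_\alpha\subseteq\FinBW(\I)$, $\alpha+1$ for the exclusion of higher ordinals). The ``in particular'' clauses follow at once, since $1+\alpha=\alpha$ whenever $\alpha$ is infinite.
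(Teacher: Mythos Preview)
Your proof is correct and follows essentially the same approach as the paper: reduce to Theorem~\ref{thm:char-conv} at levels $\alpha$ and $\alpha+1$, pass between $\omega^\beta+1$ and $\omega^\beta\cdot n+1$ via the disjoint-union decomposition and Lemma~\ref{lem:disjoint-union-vs-FinBW}, and use closure of $\FinBW(\I)$ under closed subspaces to move up and down in $\beta$. Your treatment is in fact more explicit than the paper's on the closed-subspace closure and on the Cantor--Bendixson uniqueness needed for the converse in (3), points the paper leaves implicit.
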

\begin{proof}
(1)
    If $\conv_{1+\alpha} \not\leq_K \I$ then by Theorem ~\ref{thm:char-conv}, $\omega^{\alpha}+1 \in \FinBW(\I)$ which implies that  $\omega^{\beta}+1 \in \FinBW(\I)$ for any $\beta\leq \alpha $.
    Since 
    $\omega^{\beta}\cdot n+1$ is homeomorphic to a disjoint union  of finitely many copies of the space $\omega^{\beta}+1$, we can apply  Lemma ~\ref{lem:disjoint-union-vs-FinBW} to obtain $\omega^{\beta}\cdot n+1 \in \FinBW(\I)$ for any $\beta\leq \alpha $ and $n\in \omega$. Consequently $\mathbb{K}_{\alpha} \subset \FinBW(\I). $

(2)
    If $\conv_{(1+\alpha)+1} \leq_K \I $ then by Theorem ~\ref{thm:char-conv},  $  \omega^{\alpha+1}+1 \notin \FinBW(\I)$, which implies that $\omega^{\beta}\cdot n+1 \notin \FinBW(\I)$ for each $\beta\geq \alpha+1$ and $n\in \omega\setminus\{0\}$, which leads to $ \FinBW(\I)\cap \mathbb{K}\subseteq \mathbb{K}_{\alpha}$ by Theorem~\ref{thm:Mazurkiewicz-Sierpinski}.

(3)    
It follows from items (1) and (2) and Theorem ~\ref{thm:char-conv}.
\end{proof}


\section{Kat\v{e}tov order among the critical ideals}
\label{sec:Kat-among-cons}

\begin{proposition}
\label{prop:Katetov-between-conv}
 If $\alpha<\beta$, then $\conv_\beta\leq_K\conv_\alpha$.
\end{proposition}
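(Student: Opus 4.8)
The plan is to produce an explicit function witnessing the Kat\v{e}tov inequality directly from its definition, rather than routing through $\FinBW$ or Theorem~\ref{thm:char-conv}. Unwinding $\conv_\beta\leq_K\conv_\alpha$, I need a map $f:\omega^\alpha+1\to\omega^\beta+1$ such that $f^{-1}[A]\in\conv_\alpha$ for every $A\in\conv_\beta$. Since $\alpha<\beta$ forces $\omega^\alpha<\omega^\beta$, the underlying set $\omega^\alpha+1=[0,\omega^\alpha]$ sits inside $\omega^\beta+1=[0,\omega^\beta]$ as a closed initial interval, and my candidate for $f$ is simply the inclusion map (the identity on $[0,\omega^\alpha]$), so that $f^{-1}[A]=A\cap[0,\omega^\alpha]$.

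The verification rests on the derived-set description of these ideals. Both $\omega^\alpha+1$ and $\omega^\beta+1$ are compact metrizable, hence sequentially compact, so Lemma~\ref{lem:conv-ideal-via-derivative-set}(\ref{lem:conv-ideal-via-derivative-set:item-1}) applies in each: membership in $\conv_\gamma$ is equivalent to having a finite derived set. Fixing $A\in\conv_\beta$, its derived set $A^d$ computed in $\omega^\beta+1$ is finite, and it remains to check that $f^{-1}[A]=A\cap[0,\omega^\alpha]$ has finite derived set when this is computed in $\omega^\alpha+1$.

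The key point is that the subspace topology which $[0,\omega^\alpha]$ inherits from $\omega^\beta+1$ coincides with its own order topology, because $[0,\omega^\alpha]$ is a convex (closed) subset of the ordinal $\omega^\beta+1$. Consequently, writing $S:=A\cap[0,\omega^\alpha]$, the derived set of $S$ relative to the subspace equals $S^d\cap[0,\omega^\alpha]$, where $S^d$ is taken in $\omega^\beta+1$; and since $S\subseteq A$, monotonicity of the derived-set operator gives $S^d\subseteq A^d$. Hence the derived set of $f^{-1}[A]$ computed in $\omega^\alpha+1$ is contained in the finite set $A^d$, so Lemma~\ref{lem:conv-ideal-via-derivative-set}(\ref{lem:conv-ideal-via-derivative-set:item-1}) yields $f^{-1}[A]\in\conv_\alpha$. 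This shows that $f$ witnesses $\conv_\beta\leq_K\conv_\alpha$.

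No step presents a real obstacle: the only delicate point is the identification of the subspace and order topologies on the convex set $[0,\omega^\alpha]$, which guarantees that accumulation points of $S$ inside $[0,\omega^\alpha]$ are genuine accumulation points of $A$ in the larger space. If one prefers an argument that avoids this topological remark, the same conclusion follows combinatorially from Proposition~\ref{prop:gwiazdka}: any increasing sequence $(\lambda_n)_{n<\omega}$ in $\omega^\alpha$ is also increasing in $\omega^\beta$, so applying Proposition~\ref{prop:gwiazdka} to $A\in\conv_\beta$ shows that $A\cap[\lambda_n,\lambda_{n+1}]$ is finite for all but finitely many $n$; since each such interval lies inside $[0,\omega^\alpha]$, this is precisely condition (2) of Proposition~\ref{prop:gwiazdka} certifying $A\cap[0,\omega^\alpha]\in\conv_\alpha$.
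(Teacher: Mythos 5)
Your proposal is correct and uses exactly the witness the paper uses: the identity/inclusion map $f:\omega^\alpha+1\to\omega^\beta+1$, $f(\xi)=\xi$. The paper simply asserts that this map works, whereas you spell out the verification (via the derived-set characterization of Lemma~\ref{lem:conv-ideal-via-derivative-set} or, alternatively, Proposition~\ref{prop:gwiazdka}); both routes are sound and the argument is essentially the same.
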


\begin{proof}
It is enough to notice that the function $f:\omega^\alpha+1\to\omega^\beta+1$ given by $f(\xi)=\xi$ is a witness
for $\conv_\beta\leq_K\conv_\alpha$.
\end{proof}

\begin{lemma}
\label{lem:indecomposable}
Let $A,B\subseteq\omega^\alpha$.
If the order types of $A$ and $B$ are smaller than $\omega^\alpha$, then the order type of $A\cup B$ is smaller than $\omega^\alpha$, and 
consequently
$\omega^\alpha\setminus (A\cup B)\neq\emptyset$.
\end{lemma}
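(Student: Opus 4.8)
The plan is to prove the contrapositive in a normalized form. Since $A\cup B\subseteq\omega^\alpha$ we always have $\ot(A\cup B)\le\omega^\alpha$, so it suffices to rule out $\ot(A\cup B)=\omega^\alpha$. If $\ot(A\cup B)=\omega^\alpha$, I would let $e:\omega^\alpha\to A\cup B$ be the order isomorphism and put $A^*=e^{-1}[A]$, $B^*=e^{-1}[B]$, so that $A^*\cup B^*=\omega^\alpha$ while $\ot(A^*)=\ot(A)$ and $\ot(B^*)=\ot(B)$. This reduces the whole lemma to the following \emph{covering claim}: if $A\cup B=\omega^\alpha$, then $\ot(A)=\omega^\alpha$ or $\ot(B)=\omega^\alpha$. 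Granting the claim, one of $\ot(A),\ot(B)$ would equal $\omega^\alpha$, contradicting the hypothesis; hence $\ot(A\cup B)<\omega^\alpha$, and since a subset of $\omega^\alpha$ of strictly smaller order type cannot be all of $\omega^\alpha$, this also yields $\omega^\alpha\setminus(A\cup B)\neq\emptyset$.

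I would prove the covering claim by induction on $\alpha$, using only the elementary facts that $\ot([\gamma,\gamma+\delta))=\delta$ and that the order type of an increasingly ordered disjoint union of sets of ordinals is the ordinal sum of their order types. The base case $\alpha=0$ (where $\omega^0=\{0\}$) is immediate. For the successor case $\alpha=\beta+1$, I decompose $\omega^{\beta+1}$ into the consecutive blocks $I_k=[\omega^\beta\cdot k,\omega^\beta\cdot(k+1))$, each of order type $\omega^\beta$. As $A\cup B=\omega^{\beta+1}$, each block is covered, $(A\cap I_k)\cup(B\cap I_k)=I_k$, so the induction hypothesis applied inside $I_k$ gives $\ot(A\cap I_k)=\omega^\beta$ or $\ot(B\cap I_k)=\omega^\beta$. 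Colouring each $k$ by a set that achieves $\omega^\beta$ and using the pigeonhole principle, one of the two sets, say $A$, satisfies $\ot(A\cap I_k)=\omega^\beta$ for all $k$ in some infinite $K\subseteq\omega$. Then $\bigcup_{k\in K}(A\cap I_k)\subseteq A$ is an increasingly ordered disjoint union of $\omega$ sets each of order type $\omega^\beta$, so $\ot(A)\ge\omega^\beta\cdot\omega=\omega^{\beta+1}$, forcing $\ot(A)=\omega^{\beta+1}$.

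The limit case is where the real work lies, and it is the step I expect to be the main obstacle, since the naive block decomposition no longer has a uniform block type and one must arrange that the ``winning'' blocks stay cofinal. I would fix an increasing sequence $\gamma_n\uparrow\alpha$ (possible as $\alpha$ is a countable limit, hence of cofinality $\omega$) and consider the pairwise disjoint sub-blocks $S_n=[\omega^{\gamma_n},\omega^{\gamma_n}\cdot 2)$, each of order type $\omega^{\gamma_n}$ with $\gamma_n<\alpha$; note that $S_n$ lies entirely below $S_{n+1}$ because $\omega^{\gamma_n}\cdot 2<\omega^{\gamma_n+1}\le\omega^{\gamma_{n+1}}$. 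Each $S_n$ is covered by $A$ and $B$, so the induction hypothesis gives $\ot(A\cap S_n)=\omega^{\gamma_n}$ or $\ot(B\cap S_n)=\omega^{\gamma_n}$; pigeonholing over $n$ produces an infinite $N\subseteq\omega$ on which a single set, say $A$, wins on every $S_n$ with $n\in N$. Because $N$ is infinite and $\gamma_n\uparrow\alpha$, the exponents $\{\gamma_n:n\in N\}$ remain cofinal in $\alpha$, so $\bigcup_{n\in N}(A\cap S_n)\subseteq A$ has order type $\sum_{n\in N}\omega^{\gamma_n}=\omega^\alpha$; hence $\ot(A)=\omega^\alpha$, completing the induction. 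The choice of the sub-blocks $S_n$, rather than the full gaps between consecutive $\omega^{\gamma_n}$, keeps every block order-isomorphic to some $\omega^{\gamma_n}$ so that the induction hypothesis applies verbatim, and no appeal to additive indecomposability of $\omega^\alpha$ is needed; indeed the lemma is precisely the set-theoretic form of that indecomposability.
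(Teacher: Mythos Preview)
Your proof is correct. The reduction to the ``covering claim'' via the order isomorphism $e$ is exactly the move the paper makes implicitly, and your inductive argument for the claim is sound in all three cases; in particular, in the limit case your choice of sub-blocks $S_n=[\omega^{\gamma_n},\omega^{\gamma_n}\cdot 2)$ is a nice device that keeps the block types in the form $\omega^{\gamma_n}$ so that the induction hypothesis applies directly, and the computation $\sum_{n\in N}\omega^{\gamma_n}=\omega^\alpha$ follows since the partial sums lie between $\omega^{\gamma_{n_k}}$ and $\omega^{\gamma_{n_k}+1}$, both with supremum $\omega^\alpha$.

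The paper, however, takes a much shorter route: it simply invokes the standard fact that $\omega^\alpha$ is an (additively) indecomposable ordinal and cites a textbook exercise to conclude that if a set of order type $\omega^\alpha$ is split as $A\cup B$ then one of $A,B$ already has order type $\omega^\alpha$. In other words, the paper treats your ``covering claim'' as known, while you supply a self-contained proof of it. What your approach buys is independence from the cited exercise and an explicit argument that works uniformly for all countable $\alpha$; what the paper's approach buys is brevity. As you yourself note in the last sentence, the lemma \emph{is} the set-theoretic form of indecomposability, so the two proofs are really the same statement viewed from opposite ends.
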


\begin{proof}
Suppose for the sake of contradiction that the order type of $A\cup B$ is $\omega^\alpha$.
Since $\omega^\alpha$ is an indecomposable ordinal, we obtain (see e.g.~\cite[Exercise 5 in Chapter I]{MR597342})  that  
    the order type of $A$ is $\omega^\alpha$ or the order type of $B$ is $\omega^\alpha$, a contradiction.
\end{proof}

\begin{definition}
    For every $\alpha$ and every ideal $\I$ on $\omega$ by $\conv_{\alpha+1}^\I$ we define  the ideal on $\omega^{\alpha+1}+1$ given by 
    \begin{equation*}
    \begin{split}
A\in \conv_{\alpha+1}^\I 
\iff 
& \{i\in\omega: |A\cap(\omega^\alpha\cdot i,\omega^\alpha\cdot (i+1)]|=\omega\}\in\I
\quad \text{and} 
\\&
A\cap(\omega^\alpha\!\!\cdot\! i,\omega^\alpha\!\!\cdot\! (i+1)]\in \conv((\omega^\alpha\!\!\cdot \! i,\omega^\alpha\!\!\cdot\! (i+1)])
\text{ for each } i\in\omega.
    \end{split}
\end{equation*}
\end{definition}

The following straightforward proposition reveals a relationship between the  ideal $\conv_{\alpha+1}$ and $\conv_{\alpha+1}^I$, and  provides a counterpart of Corollary~\ref{cor:conv-alpha-as-products}. 

\begin{proposition}\ 
\begin{enumerate}
\item $\conv_{\alpha+1}=\conv_{\alpha+1}^\fin \subseteq \conv_{\alpha+1}^\I$. 

\item $\conv_{\alpha+1}^\I$ is isomorphic to the ideal
$(\I\otimes\fin(\omega^\alpha+1)) \cap (\{\emptyset\}\otimes\conv_\alpha)$.
\end{enumerate}
\end{proposition}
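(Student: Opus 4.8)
The plan is to prove both parts by unwinding the definition of $\conv_{\alpha+1}^\I$ and matching it, clause by clause, against the descriptions of $\conv_{\alpha+1}$ already established before Corollary~\ref{cor:conv-alpha-as-products}. Throughout I use the standing convention that every ideal $\I$ extends $\fin$.

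For part (1), I would substitute $\I=\fin$ into the definition. The first clause of $\conv_{\alpha+1}^\fin$ then reads ``$\{i\in\omega:|A\cap(\omega^\alpha\cdot i,\omega^\alpha\cdot(i+1)]|=\omega\}$ is finite'', i.e.\ $A\cap(\omega^\alpha\cdot i,\omega^\alpha\cdot(i+1)]$ is finite for all but finitely many $i$, while the second clause is copied verbatim. These are exactly conditions (3a) and (3b) in the characterization of $\conv_{\alpha+1}$ proved just before Corollary~\ref{cor:conv-alpha-as-products}, whence $\conv_{\alpha+1}^\fin=\conv_{\alpha+1}$. The inclusion $\conv_{\alpha+1}^\fin\subseteq\conv_{\alpha+1}^\I$ is then immediate: the second clause is identical for both ideals, and ``index set finite'' implies ``index set in $\I$'' precisely because $\fin\subseteq\I$. (This is the only use of $\fin\subseteq\I$, and it is genuinely needed: for $\I=\{\emptyset\}$ a set with exactly one infinite section sits in $\conv_{\alpha+1}^\fin$ but not in $\conv_{\alpha+1}^{\{\emptyset\}}$.)

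For part (2), I would reuse the bijection $\phi\colon\omega^{\alpha+1}+1\to\omega\times(\omega^\alpha+1)$ underlying Corollary~\ref{cor:conv-alpha-as-products}: on each interval $(\omega^\alpha\cdot i,\omega^\alpha\cdot(i+1)]$ (of order type $\omega^\alpha+1$) let $\phi$ be the order isomorphism onto the $i$-th column $\{i\}\times(\omega^\alpha+1)$. Since $\phi$ maps the $i$-th interval exactly onto the $i$-th column, for every $A$ the vertical section $(\phi[A])_{(i)}$ is the $\phi$-image of $A\cap(\omega^\alpha\cdot i,\omega^\alpha\cdot(i+1)]$; being an order isomorphism, $\phi$ preserves infiniteness and, as already noted in the paragraph preceding Corollary~\ref{cor:conv-alpha-as-products}, carries $\conv((\omega^\alpha\cdot i,\omega^\alpha\cdot(i+1)])$ onto $\conv_\alpha$. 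Reading the two defining clauses of $\conv_{\alpha+1}^\I$ through $\phi$: the first clause becomes ``$\{i:(\phi[A])_{(i)}\text{ is infinite}\}\in\I$'', which is precisely $\phi[A]\in\I\otimes\fin(\omega^\alpha+1)$; the second becomes ``$(\phi[A])_{(i)}\in\conv_\alpha$ for every $i$'', which is precisely $\phi[A]\in\{\emptyset\}\otimes\conv_\alpha$. Intersecting yields the claim, and the only difference from Corollary~\ref{cor:conv-alpha-as-products} is that the outer $\fin$ in $\fin\otimes\fin$ has been replaced by $\I$.

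The argument is essentially bookkeeping; the sole subtlety (the main obstacle, such as it is) is the two points $0$ and $\omega^{\alpha+1}$, which lie in none of the intervals $(\omega^\alpha\cdot i,\omega^\alpha\cdot(i+1)]$ and are thus invisible to both ideals, whereas $\omega\times(\omega^\alpha+1)$ has no spare points. I would absorb them into column $0$: replace the order isomorphism there by one onto $\{0\}\times[2,\omega^\alpha]$ (again of order type $\omega^\alpha+1$) and send $0,\omega^{\alpha+1}$ to $(0,0),(0,1)$. Since both the $\conv_\alpha$-membership of a section (a derived-set condition, by Lemma~\ref{lem:conv-ideal-via-derivative-set}) and its infiniteness are invariant under adding or deleting finitely many points, this alteration of at most two points in column $0$ changes neither Fubini condition, so the ideal correspondence survives intact. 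This is the same point tacitly handled in Corollary~\ref{cor:conv-alpha-as-products}, which is why the proposition may fairly be called straightforward.
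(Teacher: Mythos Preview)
The paper gives no proof of this proposition; it merely labels it ``straightforward'' and moves on. Your argument is correct and supplies exactly the bookkeeping the paper omits: part~(1) is the earlier characterization (items (3a)--(3b)) with $\fin$ replaced by $\I$, and part~(2) reuses the interval-to-column bijection behind Corollary~\ref{cor:conv-alpha-as-products}; your handling of the two stray points $0$ and $\omega^{\alpha+1}$ is fine since both ideals contain all finite sets. Your explicit flag that the inclusion in part~(1) requires $\fin\subseteq\I$ is a genuine observation---the paper does \emph{not} adopt this as a blanket convention (cf.\ the explicit hypothesis in Lemma~\ref{lem:antichain}), so strictly speaking the proposition as stated needs it, though in every subsequent use of $\conv_{\alpha+1}^\I$ the ideal $\I$ does contain $\fin$.
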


\begin{lemma}
\label{lem:Katetov-for-conv-alpha}
    Let $\I$ be an ideal on $\omega$. If a function   $f:\omega^{\alpha+1}+1\to\omega^\alpha+1$ and an increasing sequence $(\lambda_n)_{n<\omega}$ are  such that $\lambda_0=0$, 
 $\sup\{\lambda_n:n<\omega\}=\omega^\alpha$ and 
 the order type of the set 
$f^{-1}[[\lambda_n,\lambda_{n+1})]$
is smaller than $\omega^\alpha$
for each $n<\omega$, then  there exists $A\notin \conv_{\alpha+1}^\I$ such that $f[A]\in \conv_\alpha$.
\end{lemma}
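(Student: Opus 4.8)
The plan is to build a single set $A=\bigcup_{i<\omega}A_i$, where each $A_i$ is an infinite subset of the interval $I_i:=(\omega^\alpha\cdot i,\omega^\alpha\cdot(i+1)]$, chosen so that the $f$-images of its points are pushed high up in $\omega^\alpha+1$. The non-membership $A\notin\conv_{\alpha+1}^\I$ will come for free from the \emph{first} clause in the definition of $\conv_{\alpha+1}^\I$: if $A\cap I_i$ is infinite for every $i$, then $\{i:|A\cap I_i|=\omega\}=\omega\notin\I$ (as $\I$ is proper), so $A\notin\conv_{\alpha+1}^\I$ no matter what the traces $A\cap I_i$ look like. Thus the whole difficulty is to keep $f[A]$ inside $\conv_\alpha$ while still placing infinitely many points in each $I_i$.

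The key geometric input, and the step I expect to be the main obstacle, is the following consequence of the order-type hypothesis: for every $i$ and every $n$ the set $\{\xi\in I_i:\xi<\max I_i,\ f(\xi)\ge\lambda_n\}$ is cofinal in $I_i$. Indeed, the complementary set $\{\xi\in I_i:\xi<\max I_i,\ f(\xi)<\lambda_n\}$ equals $I_i\cap\bigcup_{m<n}f^{-1}[[\lambda_m,\lambda_{m+1})]$, a finite union of sets of order type $<\omega^\alpha$, hence itself of order type $<\omega^\alpha$ by iterating Lemma~\ref{lem:indecomposable}. Since $\max I_i=\omega^\alpha\cdot(i+1)$ is a limit ordinal and every final segment $\{\xi\in I_i:\gamma<\xi<\max I_i\}$ has order type $\omega^\alpha$, such a final segment cannot be contained in a set of order type $<\omega^\alpha$; so points with $f$-value $\ge\lambda_n$ occur arbitrarily close to $\max I_i$. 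This is exactly where the hypothesis $\ot(f^{-1}[[\lambda_n,\lambda_{n+1})])<\omega^\alpha$ is used.

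Granting this, I would construct $A$ by a routine recursion: for each $i$ pick a strictly increasing sequence $a^i_0<a^i_1<\cdots$ below $\max I_i$ with $f(a^i_m)\ge\lambda_{i+m}$, which is possible at every step by the cofinality just established. Setting $A=\{a^i_m:i,m<\omega\}$, each trace $A\cap I_i=\{a^i_m:m<\omega\}$ is infinite (the $I_i$ are pairwise disjoint), giving $A\notin\conv_{\alpha+1}^\I$ as explained. For the image, note that a point $a^i_m$ can satisfy $f(a^i_m)\in[\lambda_n,\lambda_{n+1})$ only when $\lambda_{i+m}\le f(a^i_m)<\lambda_{n+1}$, forcing $i+m\le n$; hence only the finitely many pairs with $i+m\le n$ contribute to the block $[\lambda_n,\lambda_{n+1})$, and $f[A]\cap[\lambda_n,\lambda_{n+1})$ is finite for each $n$.

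Finally I would check that this forces $f[A]\in\conv_\alpha$. Since $f[A]\subseteq[0,\omega^\alpha]$ and $\sup_n\lambda_n=\omega^\alpha$, any accumulation point $p<\omega^\alpha$ of $f[A]$ would satisfy $p<\lambda_N$ for some $N$, so a neighborhood of $p$ contained in $[0,\lambda_N)=\bigcup_{n<N}[\lambda_n,\lambda_{n+1})$ would meet $f[A]$ in a finite set, a contradiction. Thus $(f[A])^d\subseteq\{\omega^\alpha\}$ is finite, and $f[A]\in\conv_\alpha$ by Lemma~\ref{lem:conv-ideal-via-derivative-set}, completing the argument. Note that convergence of the $A_i$ to $\max I_i$ is never needed: the second clause of $\conv_{\alpha+1}^\I$ plays no role, because the first clause already fails universally over all proper $\I$.
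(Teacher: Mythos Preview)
Your proof is correct and takes essentially the same approach as the paper: both use Lemma~\ref{lem:indecomposable} to find, in every column $[\omega^\alpha\cdot k,\omega^\alpha\cdot(k+1))$, points whose $f$-values avoid any prescribed finite union of the blocks $[\lambda_m,\lambda_{m+1})$, and then collect infinitely many such points per column so that the first clause of $\conv_{\alpha+1}^\I$ fails while each $f[A]\cap[\lambda_n,\lambda_{n+1})$ stays finite. The only cosmetic difference is that the paper runs a single global recursion guaranteeing $|f[A]\cap[\lambda_i,\lambda_{i+1})|\le 1$, whereas you recurse independently inside each $I_i$ using the threshold condition $f(a^i_m)\ge\lambda_{i+m}$ (and the cofinality you prove is slightly more than the mere nonemptiness the paper uses, but it comes from the same indecomposability argument).
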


\begin{proof}
First, we notice that by Lemma~\ref{lem:indecomposable}, 
    $$[\omega^\alpha\cdot k,\omega^\alpha\cdot (k+1))\setminus \bigcup_{i\in F} f^{-1}[[\lambda_i,\lambda_{i+1})] \neq \emptyset$$
 for every finite set $F\subseteq\omega$ and $k\in \omega$. 
Now, we take any function $g:\omega\to\omega$ such that $g^{-1}(n)$ is infinite for each $n\in \omega$, and inductively  pick elements $a_n\in \omega^\alpha$ for $n\in \omega$ in such a way that 
$$a_n \in [\omega^\alpha\cdot g(n),\omega^\alpha\cdot (g(n)+1))\setminus \bigcup \left\{ f^{-1}[[\lambda_i,\lambda_{i+1})]: \exists j<n\, (f(a_j)\in [\lambda_i,\lambda_{i+1}))\right\}.$$
Then the set  $A=\{a_n:n\in \omega\}$ has the following properties:
\begin{enumerate}
    \item $A\cap [\omega^\alpha\cdot k,\omega^\alpha\cdot (k+1))$ is infinite for each $k\in \omega$,\label{asdfasdfsadfsdf}
    \item for each $n\in \omega$ there is $i\in \omega$ such that $f(a_n)\in [\lambda_i,\lambda_{i+1})$,\label{ierutoiwehgkjdslgkds}
    \item $|f[A]\cap [\lambda_i,\lambda_{i+1})|\leq 1$ for each $i\in \omega$.\label{kjierudfjsdfd}
\end{enumerate}
By property~(\ref{asdfasdfsadfsdf}), we get that $[\omega^\alpha\cdot k, \omega^\alpha\cdot (k+1)]\cap A^d\neq \emptyset$ for each $k\in \omega$, and consequently $A\notin \conv_{\alpha+1}^\I$. 
By properties~(\ref{ierutoiwehgkjdslgkds}) and (\ref{kjierudfjsdfd}), we get that 
$(f[A])^d=\{\omega^\alpha\}$, and consequently $f[A]\in \conv_\alpha$.
\end{proof}

\begin{theorem}
\label{thm:Katetov-between-conv}
$\conv_{\alpha}\not\leq_K\conv_{\alpha+1}^\I$ for every $\alpha$ and every ideal $\I$.
In particular, $\conv_{\alpha}\not\leq_K\conv_{\alpha+1}$ for every $\alpha$. \end{theorem}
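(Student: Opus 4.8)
The plan is to prove the general statement $\conv_\alpha\not\le_K\conv_{\alpha+1}^\I$ and then read off $\conv_\alpha\not\le_K\conv_{\alpha+1}$ by specializing $\I=\fin$, since $\conv_{\alpha+1}=\conv_{\alpha+1}^\fin$. So fix an arbitrary $f\colon\omega^{\alpha+1}+1\to\omega^\alpha+1$ and assume, toward a contradiction, that $f$ witnesses $\conv_\alpha\le_K\conv_{\alpha+1}^\I$, i.e.\ $A\in\conv_\alpha\Rightarrow f^{-1}[A]\in\conv_{\alpha+1}^\I$. The engine is Lemma~\ref{lem:Katetov-for-conv-alpha}: if I can produce an increasing sequence $(\lambda_n)_{n<\omega}$ with $\lambda_0=0$, $\sup_n\lambda_n=\omega^\alpha$ and $\ot(f^{-1}[[\lambda_n,\lambda_{n+1})])<\omega^\alpha$ for every $n$, then the lemma yields $A_0\notin\conv_{\alpha+1}^\I$ with $f[A_0]\in\conv_\alpha$. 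Since $A_0\subseteq f^{-1}[f[A_0]]$ and $\conv_{\alpha+1}^\I$ is closed under subsets, $f^{-1}[f[A_0]]\notin\conv_{\alpha+1}^\I$ while $f[A_0]\in\conv_\alpha$, contradicting that $f$ is a witness. Thus the whole proof reduces to building such a ``good'' sequence out of the witness hypothesis.

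I would construct $(\lambda_n)$ by recursion, starting from $\lambda_0=0$ and, given $\lambda_n<\omega^\alpha$, choosing $\lambda_{n+1}$ larger than both $\lambda_n$ and the $n$-th term of a fixed sequence cofinal in $\omega^\alpha$ (recall $\cf(\omega^\alpha)=\omega$, so this forces $\sup_n\lambda_n=\omega^\alpha$), all while keeping $\ot(f^{-1}[[\lambda_n,\lambda_{n+1})])<\omega^\alpha$. The order-type bound is exactly where the witness hypothesis is used: a codomain interval $J$ of order type $<\omega^2$ lies in $\conv_\alpha$ (its derived set is finite, by Lemma~\ref{lem:conv-ideal-via-derivative-set}), hence $f^{-1}[J]\in\conv_{\alpha+1}^\I$, and membership forces each block-trace $f^{-1}[J]\cap(\omega^\alpha\cdot i,\omega^\alpha\cdot(i+1)]$ to have finite derived set, thus order type $<\omega^2$; additive indecomposability of $\omega^\alpha$ (Lemma~\ref{lem:indecomposable}) then assembles the block contributions into an order type $<\omega^\alpha$. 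For longer intervals one cuts $J$ along the $\omega^{\alpha}\cdot i$-grid and feeds the pieces into the same mechanism, which is most naturally organised as an induction on $\alpha$ exploiting the product presentation $\conv_{\alpha+1}^\I\approx(\I\otimes\fin)\cap(\{\emptyset\}\otimes\conv_\alpha)$ together with Corollary~\ref{cor:conv-alpha-as-products}.

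The step I expect to be the main obstacle is securing the \emph{strict} bound $\ot(f^{-1}[[\lambda_n,\lambda_{n+1})])<\omega^\alpha$ at each stage, for cofinally large endpoints and for long intervals. Two gaps must be closed. First, the witness property only guarantees that the set of blocks on which $f^{-1}[J]$ is infinite belongs to $\I$, whereas the ordinal sum $\sum_i\ot\!\big(f^{-1}[J]\cap(\omega^\alpha\cdot i,\omega^\alpha\cdot(i+1)]\big)$ can climb up to $\omega^\alpha$ when infinitely many (even $\I$-small-many) blocks carry an infinite trace; one must therefore choose the $\lambda_{n+1}$ so that the cumulative block contributions stay strictly below $\omega^\alpha$, again leaning on indecomposability to absorb the cofinitely many finite terms and the finitely many large ones. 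Second, there is a genuine boundary phenomenon at small exponents, most visibly at $\alpha=2$, where the crude per-block bound $\omega^2$ coincides with $\omega^\alpha$; the base of the induction must therefore be handled on its own, conveniently through the isomorphism $\conv_2\approx\fin\otimes\fin$ and the combinatorics of $\fin\otimes\fin$. Once the construction survives these two points, Lemma~\ref{lem:Katetov-for-conv-alpha} finishes the argument, and the ``in particular'' clause is immediate on taking $\I=\fin$.
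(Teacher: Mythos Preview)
Your reduction to Lemma~\ref{lem:Katetov-for-conv-alpha} is right, and so is the contradiction once a ``good'' sequence $(\lambda_n)$ is in hand; this is the paper's endgame too. The gap is in the construction of the sequence itself. The witness hypothesis only controls $f^{-1}[J]$ when $J\in\conv_\alpha$, i.e.\ when $J$ has finite derived set, equivalently $\ot(J)<\omega^2$. But any increasing cofinal sequence in $\omega^\alpha$ with $\alpha\ge3$ must contain intervals $[\lambda_n,\lambda_{n+1})$ of order type $\ge\omega^2$: the intervals partition $\omega^\alpha$, and a countable ordered sum of ordinals each $<\omega^2$ is $\le\omega^2<\omega^\alpha$. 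Such intervals lie outside $\conv_\alpha$, so the witness hypothesis gives you no bound whatsoever on $\ot(f^{-1}[[\lambda_n,\lambda_{n+1})])$. Your appeal to indecomposability does not rescue this: Lemma~\ref{lem:indecomposable} bounds \emph{finite} sums, whereas the domain block-decomposition is an $\omega$-sum, and membership in $\conv_{\alpha+1}^\I$ only forces the set of blocks with infinite trace into $\I$, not into $\fin$. The phrase ``cut $J$ along the $\omega^\alpha\cdot i$-grid'' also seems to mix up domain and codomain, since the codomain $\omega^\alpha+1$ carries no such grid. Even at $\alpha=2$, where every bounded interval \emph{is} in $\conv_2$, the bound you obtain is only $\ot(f^{-1}[J])\le\omega^2$, not the strict inequality the Lemma needs.

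The paper does not try to manufacture a good sequence in all cases. It fixes one sequence once and for all (namely $\lambda_n=\omega^\beta\cdot n$ when $\alpha=\beta+1$, or any cofinal sequence when $\alpha$ is a limit) and argues by dichotomy. If every preimage has order type $<\omega^\alpha$, apply the Lemma. Otherwise some $f^{-1}[[\lambda_n,\lambda_{n+1})]$ contains a set $A$ of order type exactly $\omega^\alpha$, and since $f[A]$ lands in the single interval $[\lambda_n,\lambda_{n+1})$ of order type $<\omega^\alpha$, transporting $f\!\restriction\!A$ through the order isomorphisms yields a witness for $\conv_\beta\le_K\conv_{\beta+1}$ for some $\beta<\alpha$, contradicting the inductive hypothesis. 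So the induction on $\alpha$ is not used to sharpen an order-type estimate as you suggest, but to dispose of exactly the case in which your recursive construction of $(\lambda_n)$ stalls.
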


\begin{proof}
    We proceed by induction on $\alpha$. 
First, we consider the successor case. We assume that 
$\conv_\alpha\not\leq_K\conv_{\alpha+1}^\I$ for every ideal $\I$ on $\omega$ and want to show that 
$\conv_{\alpha+1}\not\leq_K\conv_{\alpha+2}^\I$ for every ideal $\I$ on $\omega$.
Let $\lambda_n = \omega^\alpha\cdot n$ for each $n\in \omega$.
Then $(\lambda_n)_{n<\omega}$ is an increasing sequence in $\omega^{\alpha+1}$ such that $\lambda_0=0$ and $\sup\{\lambda_n:n<\omega\}=\omega^{\alpha+1}$.
Suppose for the sake of contradiction that $\conv_{\alpha+1}\leq_K\conv_{\alpha+2}^\I$ for some ideal $\I$ on $\omega$, and let $f:\omega^{\alpha+2}+1\to\omega^{\alpha+1}+1$ be a witness for this.
We have two cases.

\emph{Case (1).} The order type of $f^{-1}[[\lambda_n,\lambda_{n+1})]$ is smaller than $\omega^{\alpha+1}$ for each $n\in \omega$.

In this case, we can use Lemma~\ref{lem:Katetov-for-conv-alpha}, to obtain a set $A\notin \conv_{\alpha+2}^\I$ such that $f[A]\in \conv_{\alpha+1}$. But this contradicts  the fact that $f$ is a witness for $\conv_{\alpha+1}\leq_K\conv_{\alpha+2}^\I$.

\emph{Case (2).} The order type of $f^{-1}[[\lambda_n,\lambda_{n+1})]$ is greater  than or equal  to $\omega^{\alpha+1}$ for some  $n\in \omega$.

Let $A\subseteq f^{-1}[[\lambda_n,\lambda_{n+1})]$ be a set which has the order type equal to $\omega^{\alpha+1}$. Then the order type of $f[A]$ is at most  $\omega^{\alpha}$ as 
$$\ot(f[A]) \leq \ot([\lambda_n,\lambda_{n+1})) = \ot([\omega^\alpha\cdot n,\omega^\alpha\cdot(n+1))) = \omega^\alpha.$$
Then using the function $f\restriction A$, we can obtain 
a witness for $\conv_\alpha\leq_K\conv_{\alpha+1}$, a contradiction with the inductive  hypothesis.

Finally, we consider the limit case. We assume that $\alpha$ is a limit ordinal and $\conv_\beta\not\leq_K\conv_{\beta+1}^\I$ for each $\beta<\alpha$ and each ideal $\I$ on $\omega$.
Let $(\lambda_n)_{n<\omega}$ be an increasing sequence in $\omega^\alpha$ such that $\lambda_0=0$ and
$\sup\{\lambda_n:n<\omega\}=\omega^\alpha$.
Suppose for the sake of contradiction that $\conv_{\alpha}\leq_K\conv_{\alpha+1}^\I$ for some ideal $\I$ on $\omega$, and let $f:\omega^{\alpha+1}+1\to\omega^\alpha+1$ be a witness for this.
We have two cases.

\emph{Case (1).} The order type of $f^{-1}[[\lambda_n,\lambda_{n+1})]$ is smaller than $\omega^\alpha$ for each $n\in \omega$.

In this case, we can use Lemma~\ref{lem:Katetov-for-conv-alpha}, to obtain a set $A\notin \conv_{\alpha+1}^\I$ such that $f[A]\in \conv_\alpha$. But this contradicts  the fact that $f$ is a witness for $\conv_{\alpha}\leq_K\conv_{\alpha+1}$.

\emph{Case (2).} The order type of $f^{-1}[[\lambda_n,\lambda_{n+1})]$ is greater  than or equal  to $\omega^\alpha$ for some  $n\in \omega$.

Let $A\subseteq f^{-1}[[\lambda_n,\lambda_{n+1})]$ be a set which has the order type equal to $\omega^\alpha$. Then the order type of $f[A]$ is smaller than $\omega^\alpha$ as 
$$\ot(f[A]) \leq \ot([\lambda_n,\lambda_{n+1}))\leq \ot(\lambda_{n+1})=\lambda_{n+1}<\omega^\alpha.$$
Let $\beta<\alpha$ be such that $\lambda_{n+1}<\omega^\beta$.    
Then using the function $f\restriction A$, we can obtain 
a witness for $\conv_\beta\leq_K\conv_{\alpha}$.
Since $\beta+1<\alpha$, we get $\conv_\alpha\leq_K\conv_{\beta+1}$ by Corollary~\ref{cor:Katetov-between-conv}.
Consequently $\conv_\beta\leq_K\conv_{\beta+1}$, a contradiction with the inductive  hypothesis.
\end{proof}

\begin{corollary}
\label{cor:Katetov-between-conv}
 If $\alpha<\beta$, then $\conv_\beta\leq_K\conv_\alpha$  and $\conv_\alpha\not\leq_K\conv_\beta$.
\end{corollary}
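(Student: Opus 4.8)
The first assertion, $\conv_\beta\leq_K\conv_\alpha$, is precisely the content of Proposition~\ref{prop:Katetov-between-conv}, so nothing new is needed there. The whole task therefore reduces to establishing the non-reducibility $\conv_\alpha\not\leq_K\conv_\beta$ whenever $\alpha<\beta$. The plan is to bootstrap from the single ``adjacent'' case $\conv_\alpha\not\leq_K\conv_{\alpha+1}$, which is already delivered by Theorem~\ref{thm:Katetov-between-conv}, using the monotonicity supplied by Proposition~\ref{prop:Katetov-between-conv} together with transitivity of the Kat\v{e}tov order.

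Concretely, I would argue by contradiction. Suppose $\alpha<\beta$ and $\conv_\alpha\leq_K\conv_\beta$. Since $\alpha<\beta$ gives $\alpha+1\leq\beta$, I would first record that in either case
$$\conv_\beta\leq_K\conv_{\alpha+1}:$$
when $\alpha+1<\beta$ this is Proposition~\ref{prop:Katetov-between-conv} applied to the pair $\alpha+1<\beta$, and when $\alpha+1=\beta$ it holds trivially by reflexivity of $\leq_K$. Transitivity of the Kat\v{e}tov order (if $\K\leq_K\J$ via $g$ and $\J\leq_K\I$ via $f$, then $\K\leq_K\I$ via $f\circ g$) then yields
$$\conv_\alpha\leq_K\conv_\beta\leq_K\conv_{\alpha+1},$$
hence $\conv_\alpha\leq_K\conv_{\alpha+1}$, contradicting Theorem~\ref{thm:Katetov-between-conv}. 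This contradiction establishes $\conv_\alpha\not\leq_K\conv_\beta$ and completes the proof.

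There is essentially no obstacle remaining at this stage: all the real combinatorial work has been absorbed into Theorem~\ref{thm:Katetov-between-conv} (through Lemmas~\ref{lem:indecomposable} and \ref{lem:Katetov-for-conv-alpha}), and the corollary is a purely formal consequence. The only point deserving a word of care is the invocation of transitivity of $\leq_K$, which is standard and follows immediately from composing the witnessing functions; I would either cite it as a routine property of the Kat\v{e}tov order or note it inline via the composition $f\circ g$.
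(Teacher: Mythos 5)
Your proof is correct and follows exactly the route the paper intends: the positive direction is Proposition~\ref{prop:Katetov-between-conv}, and the negative direction reduces via monotonicity and transitivity of $\leq_K$ to the adjacent case $\conv_\alpha\not\leq_K\conv_{\alpha+1}$ of Theorem~\ref{thm:Katetov-between-conv}, which is precisely what the paper's one-line proof asserts. The only nitpick is the order of composition in your parenthetical (with your naming, the witness for $\K\leq_K\I$ is $g\circ f$, not $f\circ g$), which is immaterial.
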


\begin{proof}
It follows from Proposition~\ref{prop:Katetov-between-conv} and  Theorem~\ref{thm:Katetov-between-conv}. 
\end{proof}

\begin{lemma}
\label{lem:antichain}
Let $\I_0$ and $\I_1$ be ideals on $\omega$ such that $\I_0\not\leq_K\I_1$ and $\I_1$ is summable and tall. 
Let  $\I$ be  a tall ideal on a countable set $X$ such that 
$$\I\not\leq_K ((\I_1\restriction A)\otimes\Fin(X))\cap (\{\emptyset\}\otimes\I)$$
for every $A\notin\I_1$. 
Assume also that  $\fin\subseteq \I_0,\I_1$, $\fin(X)\subseteq \I$ and $X\notin \I$.
Then 
\[
(\I_0\otimes\Fin(X))\cap (\{\emptyset\}\otimes\I)\not\leq_K(\I_1\otimes\Fin(X))\cap (\{\emptyset\}\otimes\I).
\]
\end{lemma}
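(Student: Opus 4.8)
The plan is to argue by contradiction. Suppose that
\[
(\I_0\otimes\Fin(X))\cap(\{\emptyset\}\otimes\I)\leq_K(\I_1\otimes\Fin(X))\cap(\{\emptyset\}\otimes\I),
\]
and write $\J_0$ and $\J_1$ for the ideals on $\omega\times X$ on the left and on the right; unravelling the definitions, $B\in\J_k$ exactly when every section $B_{(i)}$ lies in $\I$ and $\{i:B_{(i)}\text{ is infinite}\}$ lies in $\I_k$. Let $f\colon\omega\times X\to\omega\times X$ be a witness, used in its coideal form: $B\in\J_1^+$ implies $f[B]\in\J_0^+$ (equivalently $f^{-1}[A]\in\J_1$ for $A\in\J_0$). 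Write $f=(u,v)$, put $W_j=f^{-1}[\{j\}\times X]$, and set $A^{(j)}=\{i:(W_j)_{(i)}\text{ is infinite}\}$. The clean first observation is that for each $j$ the second-coordinate map $v_j=v\restriction W_j\colon W_j\to X$ witnesses $\I\leq_K(\J_1\restriction W_j)$: for $E\in\I$ the one-column set $\{j\}\times E$ belongs to $\J_0$ (here $\fin\subseteq\I_0$ is used, so a single column is $\J_0$-small), hence $v_j^{-1}[E]=f^{-1}[\{j\}\times E]\in\J_1$ and is contained in $W_j$. This suggests splitting on whether $f$ ever forces infinitely much of one source column into a single target column: either (Case~1) $A^{(j)}\notin\I_1$ for some $j$, or (Case~2) $A^{(j)}\in\I_1$ for all $j$. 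The two cases are meant to collide with the hypothesis on $\I$ and with $\I_0\not\leq_K\I_1$, respectively.

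In Case~1 fix $j$ with $A:=A^{(j)}\notin\I_1$. Discarding the finite sections (irrelevant for $\J_1$), $\J_1\restriction W_j$ is the restriction of the canonical product $\J_1^A:=((\I_1\restriction A)\otimes\Fin(X))\cap(\{\emptyset\}\otimes\I)$ to $W_j\cap(A\times X)$, so from $\I\leq_K(\J_1\restriction W_j)$ I would manufacture $\I\leq_K\J_1^A$, contradicting the third hypothesis. Concretely I would transport $v_j$ to a map $A\times X\to X$ by coordinatising each infinite section $(W_j)_{(i)}$ against $X$; the point to verify is that the resulting preimage sections stay in $\I$ while the heavy-section set stays inside $A$, hence inside $\I_1\restriction A$. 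The heavy-section bookkeeping is immediate, but keeping the sections in $\I$ is the delicate point, and it is precisely where tallness of $\I$ (to thin each section so that $v_j$ behaves like an $\I$-to-$\I$ reduction there) and the tallness/summability of $\I_1$ must be brought in.

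In Case~2 every $A^{(j)}$ lies in $\I_1$, so each target column receives an infinite amount from only $\I_1$-few source columns; here I build one set $B\in\J_1^+$ with $f[B]\in\J_0$. For each concentrating row $i$ (one with some $(W_j)_{(i)}$ infinite) I use tallness of $\I$ to choose an infinite $e_i\in\I$ sitting in a single column $c(i)$ with $v(i,\cdot)[e_i]\in\I$, so that $f[\{i\}\times e_i]=\{c(i)\}\times v(i,\cdot)[e_i]$ is one $\I$-small column. Since $\I_0\not\leq_K\I_1$, the function $c$ cannot witness $\I_0\leq_K\I_1$, so there is $S\notin\I_1$ with $c[S]\in\I_0$; for $B=\bigcup_{i\in S}\{i\}\times e_i$ this pushes the heavy-column set of $f[B]$ into $\I_0$. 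It then remains to keep every column of $f[B]$ inside $\I$: such a column is the union of the $\I$-sets $v(i,\cdot)[e_i]$ along a fibre of $c\restriction S$, and these fibres lie inside the $\I_1$-small sets $A^{(c(i))}$, so the P-ideal property and vanishing weights of the summable ideal $\I_1$, together with tallness of $\I$, should let me thin $S$ so that the fibres are controlled and each union stays in $\I$, all without losing $S\notin\I_1$. Rows that only spread (all columns finite) carry nothing concentrated and are absorbed by the fibre maps $v_j$ exactly as in Case~1.

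The main obstacle is the simultaneous two-dimensional control: the inter-column (heavy-section) data must be governed by the $\I_0/\I_1$ comparison while the intra-column data stays inside $\I$, and these requirements pull against each other. This is where the summability and tallness of $\I_1$ and the tallness of $\I$ are indispensable — in Case~1 to transport the fibre reduction $v_j$ to the canonical product $\J_1^A$ (passing from a restriction to the full ideal), and in Case~2 to thin the selector $S$ without losing $\I_1$-positivity. I expect the transport in Case~1 and the fibre-thinning in Case~2 to be the technical heart of the proof.
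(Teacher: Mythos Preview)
Your decomposition into Cases 1 and 2 correctly identifies which hypothesis should be invoked in each branch, but neither branch is set up so that the hypothesis can actually be applied; the hand-waves you flag as ``the technical heart'' are genuine gaps.

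In Case~1 you observe (correctly) that $v_j$ witnesses $\I\leq_K\J_1\restriction W_j$, but passing from this to $\I\leq_K((\I_1\restriction A)\otimes\Fin(X))\cap(\{\emptyset\}\otimes\I)$ would require the restriction to $W_j\cap(A\times X)$ to be Kat\v{e}tov-equivalent to the full product on $A\times X$. That holds when the complement sections $X\setminus(W_j)_{(i)}$ lie in $\I$ for every $i\in A$, and you have no such control: these complements can be $\I$-positive. ``Coordinatising each infinite section against $X$'' via bijections $\phi_i$ does not help, since $\phi_i^{-1}$ of an $\I$-set need not lie in $\I$, and neither tallness of $\I$ nor summability of $\I_1$ supplies the missing control. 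In the paper's argument the hypothesis on $\I$ is invoked only \emph{after} a separate sub-case has established that the analogous complement sections $B_n$ lie in $\I$; that step is precisely what is missing from your Case~1.

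In Case~2 you build $B=\bigcup_{i\in S}\{i\}\times e_i$ with $f[\{i\}\times e_i]\subseteq\{c(i)\}\times(\text{set in }\I)$, but then each target column $(f[B])_{(j)}$ is a union of $\I$-sets indexed by the fibre $c^{-1}(j)\cap S$; these fibres lie in $A^{(j)}\in\I_1$ but can be infinite, so the column need not be in $\I$. Your proposal to ``thin $S$'' to make the fibres finite while keeping $S\notin\I_1$ asks for a finite-to-one selector on a positive set, and summability of $\I_1$ does not deliver this. The paper sidesteps the issue by first splitting on $T=\{n:E_n\text{ is infinite}\}$ (where $E_n$ is dual to your $A^{(j)}$: $j\in E_n\iff n\in A^{(j)}$) and using that $\I_0\not\leq_K\I_1$ forces $\I_0\not\leq_K\I_1\restriction T$ or $\I_0\not\leq_K\I_1\restriction(\omega\setminus T)$. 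On $T$ the selector can be chosen \emph{injective} because each $E_n$ is infinite, eliminating the fibre problem outright; on $\omega\setminus T$ a further split on $H_k=\{n:\max E_n\leq k\}$ isolates the sub-case where the hypothesis on $\I$ applies (with the $B_n\in\I$ control in hand) and the sub-case where summability of $\I_1$ is genuinely used, namely to approximate each $H_k\setminus H_{k-1}$ by a finite set $G_k$ so that the fibres of the resulting map are finite by construction.
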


\begin{proof}
Denote $\J_i=(\I_i\otimes\Fin(X))\cap (\{\emptyset\}\otimes\I)$ for $i=0,1$. Let $f:\omega\times X\to\omega\times X$ be arbitrary. If there is $n\in\omega$ such that $f[\{n\}\times X]\cap(\{k\}\times X)$ is finite for all $k\in\omega$, then $C=\{n\}\times X\notin\J_1$, but $f[C]\in\J_0$, so we are done. Hence, assume that 
\[
E_n=\{k\in\omega: f[\{n\}\times X]\cap(\{k\}\times X)\text{ is infinite}\}
\]
is non-empty for all $n\in\omega$ and put $T=\{n\in\omega: E_n\text{ is infinite}\}$. Since $\I_0\not\leq_K\I_1$, either $\I_0\not\leq_K\I_1\restriction T$ or $\I_0\not\leq_K\I_1\restriction (\omega\setminus T)$.

Assume first that $\I_0\not\leq_K\I_1\restriction T$. In particular, $T$ is infinite in this case. For each $n\in T$ inductively find $g(n)\in\omega$ and $A_n\subseteq X$ such that:
\begin{itemize}
    \item[(a)] $g(n)\in E_n$,
    \item[(b)] $g(n)<g(n+1)$,
    \item[(c)] $f^{-1}[\{g(n)\}\times A_n]\cap(\{n\}\times X)$ is infinite,
    \item[(d)] $A_n\in\I$,
\end{itemize}
(note that (c) is possible to obtain as (a) implies that $f[\{n\}\times X]\cap(\{g(n)\}\times X)$ is infinite, while (d) is possible to obtain as $\I$ is tall). Since $g:T\to\omega$ and $\I_0\not\leq_K\I_1\restriction T$, there is $B\subseteq T$ such that $g[B]\in\I_0$, but $B\notin\I_1$. Then $$C=\bigcup_{n\in B}\{g(n)\}\times A_n\in \J_0$$ 
by $g[B]\in\I_0$ and items (b) and (d), while $f^{-1}[C]\notin \I_1\otimes\Fin(X)$ by $B\notin\I_1$ and item (c), so also $f^{-1}[C]\notin \J_1$.

Assume now that $\I_0\not\leq_K\I_1\restriction (\omega\setminus T)$. For each $n\in\omega\setminus T$ define
$$B_n=\left(f^{-1}[f[\{n\}\times X]\setminus(E_n\times X)]\right)_{(n)}.$$
For every $n\in\omega\setminus T$ the set $f[\{n\}\times X]\setminus(E_n\times X)$ belongs to $\J_0$, so if $B_n\notin \I$ for some $n$, then we are done (as $\J_1\not\ni\{n\}\times B_n\subseteq f^{-1}[f[\{n\}\times X]\setminus(E_n\times X)]$). Hence, assume that $B_n\in\I$ for every $n\in\omega\setminus T$.

Define $H_k=\{n\in\omega\setminus T: \max E_n\leq k\}$ for every $k\in\omega$. 

Consider first the case that $H_k\notin \I_1\restriction(\omega\setminus T)$ for some $k$. Observe that $\I\restriction (\omega\setminus B_n)$ is $\leq_K$-equivalent to $\I$, for every $n\notin T$ (as $B_n\in\I$), so 
$$((\I_1\restriction H_k)\otimes\Fin(X))\cap \left(\left(\{\emptyset\}\otimes\I\right)\restriction \left(\bigcup_{n\in H_k}\{n\}\times B_n\right)\right)$$ 
is $\leq_K$-equivalent to $((\I_1\restriction H_k)\otimes\Fin(X))\cap ((\{\emptyset\}\otimes\I)\restriction (H_k\times X))$. Since $\I\not\leq_K ((\I_1\restriction H_k)\otimes\Fin(X))\cap (\{\emptyset\}\otimes\I)$ (by the assumptions of this Lemma) and $\J_0\restriction ((k+1)\times X)$ is $\leq_K$-equivalent to $\I$, we can find $A\in\J_0\restriction ((k+1)\times X)$ such that $f^{-1}[A]\notin\J_1$.

Assume from now on that $H_k\in \I_1\restriction(\omega\setminus T)$ for every $k$. Let $\phi$ be a measure on $\omega\setminus T$ such that $\I_1\restriction(\omega\setminus T)=\{A\subseteq \omega\setminus T: \phi(A)<\infty\}$. For each $k$ find a finite set $G_k\subseteq H_k\setminus H_{k-1}$ such that 
$$\phi(G_k)\geq \phi(H_k\setminus H_{k-1})-\frac{1}{2^{k+1}}.$$ 

Define $G=\bigcup_{k\in\omega}G_k$ and observe that $\phi((\omega\setminus T)\setminus G)\leq 1$. Indeed, if $\phi((\omega\setminus T)\setminus G)>1$, then there should exist some finite $F\subseteq (\omega\setminus T)\setminus G$ such that $\phi(F)>1$. Since $\omega\setminus T=\bigcup_{k\in\omega}H_k$ and $H_k\subseteq H_{k+1}$, there is $m$ such that $F\subseteq H_m\setminus G=H_m\setminus\bigcup_{k\leq m}G_k$. But we have
\begin{equation*}
    \begin{split}
\phi(H_m)
&\geq 
\phi(F)+\phi\left(\bigcup_{k\leq m}G_k\right)
=
\phi(F)+\sum_{k\leq m}\phi(G_k)
\\&\geq
\phi(F)+\sum_{k\leq m} \left(\phi(H_k\setminus H_{k-1})-\frac{1}{2^{k+1}}\right)
\\&=
\phi(F)-\frac{2^{m+1}-1}{2^{m+1}}+\phi\left(\bigcup_{k\leq m}H_k\setminus H_{k-1}\right)
\\&=
\phi(F)-\frac{2^{m+1}-1}{2^{m+1}}+\phi(H_m),
    \end{split}
\end{equation*}
which gives us $\phi(F)\leq (2^{m+1}-1)/2^{m+1}<1$, a contradiction.

Since $(\omega\setminus T)\setminus G\in\I_1$, the restrictions $\I_1\restriction (\omega\setminus T)$ and $\I_1\restriction G$ are isomorphic (by \cite[Proposition 1.2]{MR3594409} and the fact that $\I_1$ is tall). 
Let $h:G\to\omega$ be given by $h\restriction G_k=k$. Since $\I_0\not\leq\I_1\restriction (\omega\setminus T)$, there is $B\in\I_0$ such that $g^{-1}[B]\subseteq G$ and $g^{-1}[B]\notin\I_1$. 

For every $k\in B$ and $n\in G_k$, using tallness of $\I$, find some $A_n\in\I$ such that $(f^{-1}[\{k\}\times A_n])_{(n)}$ is infinite and $\{k\}\times A_n\subseteq f[\{n\}\times X]$ (this is possible as $n\in G_k\subseteq H_k\setminus H_{k-1}$ means that $k\in E_n$). Define 
$$C=\bigcup_{k\in B}\bigcup_{n\in G_k} \{k\}\times A_n=\bigcup_{k\in B}\left(\{k\}\times\bigcup_{n\in G_k}A_n\right).$$ 
Then $C\in\J_0$, but $f^{-1}[C]\notin\J_1$. This finishes the proof.
\end{proof}

\begin{theorem}
For every $\alpha$, there are $2^\omega$ many pairwise $\leq_K$-incomparable $\Pi^0_5$ ideals that are above $\conv_{\alpha+1}$, but not above $\conv_\alpha$
in the Kat\v{e}tov order.
\end{theorem}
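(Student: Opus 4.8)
The plan is to realise the desired $2^\omega$ ideals as $\J_s=\conv_{\alpha+1}^{\I_s}$, where $\{\I_s:s\in 2^\omega\}$ is a family of pairwise $\leq_K$-incomparable tall summable ideals on $\omega$, and to read off all four required properties from the structural facts already established, with Lemma~\ref{lem:antichain} and Theorem~\ref{thm:Katetov-between-conv} doing the real work. By the proposition recording the basic properties of $\conv_{\alpha+1}^\I$, each $\J_s$ is isomorphic to $(\I_s\otimes\fin(\omega^\alpha+1))\cap(\{\emptyset\}\otimes\conv_\alpha)$, i.e.\ exactly an ideal of the form $(\I_0\otimes\Fin(X))\cap(\{\emptyset\}\otimes\I)$ appearing in Lemma~\ref{lem:antichain} with $X=\omega^\alpha+1$ and $\I=\conv_\alpha$. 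Since $\leq_K$ and Borel complexity are isomorphism-invariant, it suffices to argue with this product representation.

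Three of the four properties are essentially immediate. As $\fin\subseteq\I_s$, the same proposition gives $\conv_{\alpha+1}=\conv_{\alpha+1}^\fin\subseteq\J_s$, and inclusion of ideals on a common set yields $\conv_{\alpha+1}\leq_K\J_s$ via the identity map, so each $\J_s$ lies above $\conv_{\alpha+1}$. That $\conv_\alpha\not\leq_K\J_s$ is literally Theorem~\ref{thm:Katetov-between-conv}. For the complexity, $\{\emptyset\}\otimes\conv_\alpha$ is the countable intersection over $i$ of the sets $\{A:A_{(i)}\in\conv_\alpha\}$, each $\Sigma^0_4$ by Proposition~\ref{prop:Borel_complexity}, hence $\{\emptyset\}\otimes\conv_\alpha$ is $\Pi^0_5$; as $\I_s\otimes\Fin$ is $\Sigma^0_4$ by the same quantifier count that gives $\Fin\otimes\Fin\in\Sigma^0_4$, and $\Sigma^0_4\subseteq\Pi^0_5$, the intersection $\J_s$ is $\Pi^0_5$.

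The substantive point is pairwise incomparability, which I would obtain by applying Lemma~\ref{lem:antichain} twice, with $(\I_0,\I_1)=(\I_s,\I_t)$ and then $(\I_t,\I_s)$, in both cases taking $X=\omega^\alpha+1$ and $\I=\conv_\alpha$. The routine hypotheses hold: $\conv_\alpha$ is tall on the countable space $\omega^\alpha+1$ by Lemma~\ref{lem:conv-ideal-via-derivative-set}, with $\fin(\omega^\alpha+1)\subseteq\conv_\alpha$ and $\omega^\alpha+1\notin\conv_\alpha$ (its derived set is infinite); the $\I_s,\I_t$ are summable, tall, and contain $\fin$; and $\I_s\not\leq_K\I_t$ by incomparability of the family. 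The only hypothesis requiring thought is $\conv_\alpha\not\leq_K((\I_t\restriction A)\otimes\Fin(\omega^\alpha+1))\cap(\{\emptyset\}\otimes\conv_\alpha)$ for every $A\notin\I_t$; but the proposition again identifies this ideal with $\conv_{\alpha+1}^{\I_t\restriction A}$ (note $A$ is infinite, being outside $\fin\subseteq\I_t$, so $\I_t\restriction A$ is an ideal on a copy of $\omega$), and Theorem~\ref{thm:Katetov-between-conv} asserts $\conv_\alpha\not\leq_K\conv_{\alpha+1}^{\J}$ for every ideal $\J$. Lemma~\ref{lem:antichain} then delivers $\J_s\not\leq_K\J_t$, and symmetrically $\J_t\not\leq_K\J_s$, so the $\J_s$ are pairwise $\leq_K$-incomparable (and in particular distinct).

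The one external ingredient, and the place where I expect the genuine difficulty to sit, is the family $\{\I_s:s\in 2^\omega\}$ itself: once it is in hand, the theorem is pure assembly on top of Lemma~\ref{lem:antichain}. I would first try to cite a known construction of $2^\omega$ pairwise $\leq_K$-incomparable tall summable ideals; failing a clean reference, one builds them by hand, encoding each $s\in 2^\omega$ into the relative growth of a weight sequence along a fixed interval partition of $\omega$ and verifying, via the description of $\leq_K$ between summable ideals in terms of reindexing functions, that distinct patterns cannot be transported into one another. This verification, rather than anything in the present argument, is the real work.
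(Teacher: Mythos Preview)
Your proposal is correct and follows essentially the same approach as the paper: the paper likewise takes $\J_s=\conv_{\alpha+1}^{\I_s}$ for a cited family of $2^\omega$ pairwise $\leq_K$-incomparable tall summable ideals, derives $\conv_{\alpha+1}\leq_K\J_s$ from inclusion, $\conv_\alpha\not\leq_K\J_s$ from Theorem~\ref{thm:Katetov-between-conv}, pairwise incomparability from Lemma~\ref{lem:antichain} (with the key hypothesis verified exactly as you do, via Theorem~\ref{thm:Katetov-between-conv} applied to $\conv_{\alpha+1}^{\I_t\restriction A}$), and the $\Pi^0_5$ bound from the Fubini-product complexity of $\{\emptyset\}\otimes\conv_\alpha$ and $\I_s\otimes\Fin$. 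Your anticipation that the external family is the only genuine ingredient is accurate; the paper simply cites it.
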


\begin{proof}
By \cite[Theorem 1]{MR3513296}  (see also \cite[Corollary 3.6]{MR3550610}), there is a family of tall summable ideals $\{\I_\beta:\beta<2^\omega\}$ such that  $\I_\beta\not\leq_K\I_\gamma$ for all distinct $\beta,\gamma<2^\omega$.

We claim that  the family of ideals $\{\conv_{\alpha+1}^{\I_\beta}:\beta<2^\omega\}$ is the required one.

For every $\beta<2^\omega$ the ideal $\conv_{\alpha+1}^{\I_\beta}$ is $\leq_K$-above $\conv_{\alpha+1}$ (as $\conv_{\alpha+1}\subseteq\conv_{\alpha+1}^{\I_\beta}$), but not $\leq_K$-above $\conv_\alpha$ (by Theorem \ref{thm:Katetov-between-conv}). Moreover, $\conv_{\alpha+1}^{\I_\beta}\not\leq_K\conv_{\alpha+1}^{\I_\gamma}$ for all distinct $\beta,\gamma<2^\omega$ (by Lemma \ref{lem:antichain}, which can be applied thanks to Theorem~\ref{thm:Katetov-between-conv} and the fact that $\conv_\alpha$ is tall).

Finally, observe that each $\conv_{\alpha+1}^{\I_\beta}$ is $\Pi^0_5$, since it is isomorphic to $(\{\emptyset\}\otimes\conv_\alpha)\cap(\I_\beta\otimes\Fin(\omega^\alpha+1))$ and $\{\emptyset\}\otimes\conv_\alpha$ is $\Pi^0_5$ (by \cite[Proposition~1.6.16]{alcantara-phd-thesis} and the fact that  $\conv_\alpha$ is $\Sigma^0_4$ by Proposition \ref{prop:Borel_complexity} and $\{\emptyset\}$ is $\Pi^0_1$), while $\I_\beta\otimes\Fin(\omega^\alpha+1)$ is $\Sigma^0_4$ (by \cite[Proposition~1.6.16]{alcantara-phd-thesis} and the fact that $\I_\beta$ and $\Fin(\omega^\alpha+1)$ are  $\Sigma^0_2$). 
\end{proof}


\section{Critical ideals versus the ideal \texorpdfstring{$\conv$}{conv} and  the space \texorpdfstring{$\omega_1$}{omega one}}
\label{sec:conv-alpha-vs-conv}

\begin{proposition}
\label{prop:conv-leq-Id}
 $\conv\leq_K \I_d$.   
\end{proposition}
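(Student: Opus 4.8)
The plan is to exhibit an explicit witness $f\colon\omega\to\Q\cap[0,1]$ for the reduction, i.e.\ a function with $f^{-1}[A]\in\I_d$ for every $A\in\conv$. Since $\conv=\conv(\Q\cap[0,1])$, Lemma~\ref{lem:conv-ideal-via-derivative-set} tells us that $A\in\conv$ if and only if the derived set $A^d$ (computed in $[0,1]$) is finite. Thus it suffices to build $f$ so that the preimage of every subset of $\Q\cap[0,1]$ having only finitely many accumulation points has asymptotic density $0$. The guiding intuition is to let $f$ sweep $\omega$ across increasingly fine, uniformly spaced rational nets of $[0,1]$, so that a density-positive preimage is forced to spread out and hence to accumulate at infinitely many points.

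Concretely, I would partition $\omega$ into consecutive blocks $I_1,I_2,\dots$ with $|I_n|=n+1$, and define $f$ to map $I_n$ bijectively onto the net $N_n=\{j/n:0\le j\le n\}\subseteq\Q\cap[0,1]$. The single geometric fact driving the argument is that consecutive points of $N_n$ are $1/n$ apart, so any real interval of length $2\epsilon$ meets $N_n$ in at most $2\epsilon n+1$ points. This uniformity is exactly what is needed to control how many net points can cluster near a prescribed finite set of limit points.

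For the verification, fix $A\in\conv$ and write $A^d=\{p_1,\dots,p_k\}$. Given $\epsilon>0$, the set $S_\epsilon=A\setminus\bigcup_{i\le k}(p_i-\epsilon,p_i+\epsilon)$ is finite, because $A$ accumulates only at the $p_i$. I would then bound, for each $n$,
$$|I_n\cap f^{-1}[A]|=|N_n\cap A|\le k(2\epsilon n+1)+|S_\epsilon|,$$
splitting $N_n\cap A$ into the net points lying in some $\epsilon$-ball around a $p_i$ and the remaining net points, which must land in $S_\epsilon$. Summing over $n\le M$ and dividing by $\sum_{n\le M}(n+1)\sim M^2/2$ gives, in the limit, $\limsup_N |f^{-1}[A]\cap N|/N\le 2k\epsilon$; as the left-hand side is independent of $\epsilon$, letting $\epsilon\to0$ yields density $0$, that is, $f^{-1}[A]\in\I_d$.

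The main obstacle is the density bookkeeping, and in particular keeping the two contributions separate. The term coming from net points near the accumulation points is $O(\epsilon)$ per unit increase in $n$ and aggregates to the essential bound $2k\epsilon$ — this is precisely why the nets must be equally spaced. The contribution of the finitely many stray points in $S_\epsilon$ is only a lower-order nuisance: even though a fixed rational can occur as a net point for infinitely many $n$, it contributes at most one element to each block, hence $O(M)$ points against a block-sum of order $M^2$, which washes out. Some care is needed to present these estimates cleanly and to justify letting $\epsilon\to0$ after taking $\limsup_N$.
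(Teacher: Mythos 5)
Your proof is correct, but it takes a genuinely different route from the paper's. The paper disposes of this proposition in one line: it cites an external result that $[0,1]\notin\FinBW(\I_d)$ and then applies the equivalence $\conv\not\leq_K\I\iff[0,1]\in\FinBW(\I)$ of Theorem~\ref{thm:Meza-compact-metric-versus-conv}. You instead build an explicit Kat\v{e}tov witness $f:\omega\to\Q\cap[0,1]$ out of equally spaced rational nets and verify the density estimate by hand, using only Lemma~\ref{lem:conv-ideal-via-derivative-set} to reduce membership in $\conv$ to finiteness of the derived set. Your bookkeeping checks out: $S_\epsilon$ is finite because an accumulation point of $S_\epsilon$ would be an accumulation point of $A$ at distance at least $\epsilon$ from every $p_i$; each interval of length $2\epsilon$ meets $N_n$ in at most $2\epsilon n+1$ points; the block sums give $\limsup_N |f^{-1}[A]\cap N|/N\leq 2k\epsilon$ (and since each block has length $n+1$, which is negligible against the cumulative length of order $n^2$, cut-offs $N$ falling inside a block cause no trouble); and letting $\epsilon\to 0$ is legitimate because the left-hand side does not depend on $\epsilon$. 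What the paper's route buys is brevity and reuse of machinery already in place; what yours buys is a self-contained argument that in effect reproves the content of the cited example, since a sequence in $[0,1]$ witnessing $[0,1]\notin\FinBW(\I_d)$ is, read through Theorem~\ref{thm:Meza-compact-metric-versus-conv}, essentially the same object as your witness.
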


\begin{proof}
    In \cite[Example 3]{MR1181163}, the author showed that $[0,1]\notin \FinBW(\I_d)$, so 
 $\conv\leq_K \I_d$ by Theorem~\ref{thm:Meza-compact-metric-versus-conv}. 
\end{proof}

Below, we consider $\omega_1$ as a topological space with the order topology.

\begin{proposition}
\label{prop:omega_1-not-in-FinBW-conv-alpha}
$\omega_1\in \FinBW(\conv)\setminus \FinBW(\conv_\alpha)$ for every  $\alpha$.
\end{proposition}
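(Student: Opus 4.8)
The plan is to prove the two memberships separately: $\omega_1\in\FinBW(\conv)$ and $\omega_1\notin\FinBW(\conv_\alpha)$ for every $\alpha$.

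For the easier nonmembership, I would exhibit a single bad sequence. Fix the bijection $b\colon\omega\to\omega^\alpha+1$ used to regard $\conv_\alpha$ as an ideal on $\omega$, and consider the sequence $x_n=b(n)$, viewed as a sequence in $\omega_1$ via the inclusion $\omega^\alpha+1\subseteq\omega_1$. The point is that if $A\subseteq\omega$ is infinite and $(x_n)_{n\in A}$ converges in $\omega_1$ to some $p$, then, since $b\restriction A$ is injective, every accumulation point of $b[A]$ must equal $p$; hence $b[A]^d=\{p\}$ is finite and $A\in\conv_\alpha$ by Lemma~\ref{lem:conv-ideal-via-derivative-set}. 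Thus no convergent subsequence is indexed by a set outside $\conv_\alpha$, so $\omega_1\notin\FinBW(\conv_\alpha)$. (Equivalently, one can quote that $\omega^\alpha+1\notin\FinBW(\conv_\alpha)$, which follows from $\conv_{1+\alpha}\leq_K\conv_\alpha$ --- see Proposition~\ref{prop:Katetov-between-conv} --- via Theorem~\ref{thm:char-conv}, together with the fact that $\omega^\alpha+1$ is a closed subspace of $\omega_1$.)

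For the membership $\omega_1\in\FinBW(\conv)$, the first step is to use that $\cf(\omega_1)=\omega_1>\omega$: any sequence $(x_n)_{n\in\omega}$ in $\omega_1$ is bounded, so it takes values in a compact \emph{countable} (hence metrizable) subspace $Y=[0,\gamma]$ for some $\gamma<\omega_1$. Since $Y$ is closed in $\omega_1$, convergence of a subsequence inside $Y$ is the same as convergence in $\omega_1$. So it suffices to prove the following self-contained statement: \emph{for every countable compact space $Y$ and every $x\colon\omega\to Y$ there is $A\notin\conv$ with $(x_n)_{n\in A}$ convergent.} (Note this simultaneously shows $\mathbb{K}\subseteq\FinBW(\conv)$, matching the remark that no single $\conv_\alpha$ is critical for $\omega_1$.)

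The heart of the proof --- and the step I expect to be the main obstacle --- is this last statement, where the difficulty is that a convergent subsequence produced by mere sequential compactness may well be indexed by a $\conv$-small set; we must \emph{steer} the construction so that the index set has infinite derived set in $[0,1]$. Writing $e\colon\omega\to\Q\cap[0,1]$ for the bijection defining $\conv$, recall $A\notin\conv$ iff $e[A]^d$ is infinite. I would argue as follows. For each real $q\in(\Q\cap[0,1])^d=[0,1]$, choose distinct rationals tending to $q$ and, by sequential compactness of $Y$, a subsequence of them whose $x$-labels converge to some $p_q\in Y$; this assigns a point $p_q\in Y$ to each $q\in[0,1]$. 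Since $[0,1]$ is uncountable and $Y$ is countable, by pigeonhole there are infinitely many distinct reals $q_0,q_1,\dots$ with a common value $p_{q_i}=p$. Now fix a decreasing neighbourhood basis $(U_m)_{m\in\omega}$ of $p$ in $Y$ and, for each $i$ and each $m\geq i$, pick (distinctly) a rational $e(a_{i,m})$ within $1/m$ of $q_i$ whose label $x(a_{i,m})$ lies in $U_m$; this is possible precisely because near $q_i$ the labels accumulate at $p$. Put $A=\{a_{i,m}:i\in\omega,\ m\geq i\}$. Then $e(a_{i,m})\to q_i$ as $m\to\infty$ for each fixed $i$, so every $q_i$ is an accumulation point of $e[A]$ and $e[A]^d$ is infinite, giving $A\notin\conv$. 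On the other hand, for any fixed $M$ the labels outside $U_M$ come only from the finitely many pairs $(i,m)$ with $i\leq m<M$, so $\{n\in A:x(n)\notin U_M\}$ is finite and $(x_n)_{n\in A}$ converges to $p$. This produces the desired $A$ and completes the argument; the only points needing care are the distinctness of the chosen rationals (arranged by the fact that infinitely many rationals near $q_i$ carry labels in $U_m$) and the metrizability of $Y$, which guarantees the nested countable basis at $p$.
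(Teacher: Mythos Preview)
Your proof is correct. For $\omega_1\notin\FinBW(\conv_\alpha)$ you do essentially what the paper does---exhibit the identity inclusion $\omega^\alpha+1\hookrightarrow\omega_1$ as a bad sequence---only with the details spelled out via Lemma~\ref{lem:conv-ideal-via-derivative-set}.

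For $\omega_1\in\FinBW(\conv)$, however, your route is genuinely different. The paper argues indirectly: it quotes from \cite{MR4584767} that $\omega_1\in\FinBW(\I_d)$, invokes Proposition~\ref{prop:conv-leq-Id} ($\conv\leq_K\I_d$, itself proved by citing \cite{MR1181163}), and concludes via Theorem~\ref{thm:KAT-vs-FinBW}. You instead give a direct, self-contained construction: bound the sequence in a countable compact $Y=[0,\gamma]$, assign to each real $q\in[0,1]$ a limit point $p_q\in Y$ via sequential compactness, pigeonhole on the countability of $Y$ to find infinitely many distinct $q_i$ sharing a common value $p$, and then diagonalise to build $A$ with $e[A]$ accumulating at every $q_i$ while $x\restriction A\to p$. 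This avoids the detour through $\I_d$ and the external citations, and as you observe it yields the stronger statement $\mathbb{K}\subseteq\FinBW(\conv)$, which the paper does not isolate. The paper's approach, by contrast, is a one-liner given the cited results and keeps the proposition tied to the Kat\v{e}tov-order machinery that drives the rest of the paper.
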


\begin{proof}
In \cite[Proposition~11.1(a)]{MR4584767}, the author showed that 
$\omega_1\in \FinBW(\I_d)$.
Consequently, $\omega_1\in \FinBW(\conv)$ by Proposition~\ref{prop:conv-leq-Id} and Theorem~\ref{thm:KAT-vs-FinBW}.
On the other hand,   the sequence $f:\omega^\alpha+1\to \omega_1$ given by $f(\xi)=\xi$ is a witness for $\omega_1\notin \FinBW(\conv_\alpha)$.
\end{proof}

\begin{corollary}
\label{cor:conv-vs-conv-alpha}
  $\conv \leq_K \conv_{\alpha}$ and  $\conv_\alpha\not\leq_K\conv$ for every  $\alpha$.
\end{corollary}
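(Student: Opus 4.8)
The plan is to treat the two assertions separately: the non-reducibility $\conv_\alpha\not\leq_K\conv$ falls out immediately from the preceding proposition, while $\conv\leq_K\conv_\alpha$ requires producing an explicit Katětov witness. For the non-reducibility I would argue by contradiction. If $\conv_\alpha\leq_K\conv$ held, then Theorem~\ref{thm:KAT-vs-FinBW} (applied with $\I=\conv_\alpha$ and $\J=\conv$) would give $\FinBW(\conv)\subseteq\FinBW(\conv_\alpha)$. But Proposition~\ref{prop:omega_1-not-in-FinBW-conv-alpha} states precisely that $\omega_1\in\FinBW(\conv)\setminus\FinBW(\conv_\alpha)$, which is a contradiction. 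This half needs no further work.

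For $\conv\leq_K\conv_\alpha$ I would exhibit a single function $e\colon\omega^\alpha+1\to\Q\cap[0,1]$ witnessing the reduction, namely a homeomorphic embedding of the compact countable space $\omega^\alpha+1$ into $[0,1]$ whose image $Y=e[\omega^\alpha+1]$ consists of rationals. Such an embedding exists because $\omega^\alpha+1$ is a countable, compact, metrizable, zero-dimensional space, hence embeds homeomorphically into $\Q$ by the universality of $\Q$ for countable metric spaces; since the space is compact its image is closed in $\R$, and after rescaling we may take $Y\subseteq\Q\cap[0,1]$. Alternatively one constructs $e$ directly by transfinite recursion on $\alpha$, placing suitably scaled rational copies of the lower-level embeddings into a sequence of rational intervals converging to a rational point.

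To verify that $e$ is a Katětov witness, fix $A\in\conv$. Since $e$ maps into $Y$ we have $e^{-1}[A]=e^{-1}[A\cap Y]$, and because $e$ is a homeomorphism onto $Y$ it follows that $(e^{-1}[A])^d=e^{-1}\bigl[(A\cap Y)^d\bigr]$, the derived set on the right being computed in $Y$. As $Y$-accumulation points are also $[0,1]$-accumulation points and $A\cap Y\subseteq A$, we get $(A\cap Y)^d\subseteq A^d$, which is finite because $A\in\conv$ (Lemma~\ref{lem:conv-ideal-via-derivative-set}). Hence $(e^{-1}[A])^d$ is finite, and applying Lemma~\ref{lem:conv-ideal-via-derivative-set} in the sequentially compact space $\omega^\alpha+1$ yields $e^{-1}[A]\in\conv_\alpha$. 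Thus $e$ witnesses $\conv\leq_K\conv_\alpha$. The main, and essentially only substantial, obstacle is the construction of the rational homeomorphic embedding $e$; once it is in hand, the reduction is a routine derived-set computation through Lemma~\ref{lem:conv-ideal-via-derivative-set}, and it works uniformly for every $\alpha$.
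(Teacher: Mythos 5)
Your proof of $\conv_\alpha\not\leq_K\conv$ is exactly the paper's: Proposition~\ref{prop:omega_1-not-in-FinBW-conv-alpha} plus Theorem~\ref{thm:KAT-vs-FinBW}. For $\conv\leq_K\conv_\alpha$ you take a genuinely different and more direct route. The paper argues by contradiction through the $\FinBW$ machinery: if $\conv\not\leq_K\conv_\alpha$, then $[0,1]\in\FinBW(\conv_\alpha)$ by Theorem~\ref{thm:Meza-compact-metric-versus-conv}, hence $\omega^{\alpha+1}+1\in\FinBW(\conv_\alpha)$ (as a closed subspace of $[0,1]$), hence $\conv_{1+(\alpha+1)}\not\leq_K\conv_\alpha$ by Theorem~\ref{thm:char-conv}, contradicting Corollary~\ref{cor:Katetov-between-conv}. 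You instead exhibit an explicit Kat\v{e}tov witness: a homeomorphic embedding $e$ of $\omega^\alpha+1$ onto a compact (hence closed) set of rationals in $[0,1]$, and then check via Lemma~\ref{lem:conv-ideal-via-derivative-set} that preimages of sets with finite derived set have finite derived set. Your derived-set computation is correct (homeomorphisms preserve derived sets, and relative accumulation points in $e[\omega^\alpha+1]$ are accumulation points in $[0,1]$), and the rational embedding is a standard fact, so the argument is sound. What your approach buys is self-containment and constructiveness: it bypasses Theorem~\ref{thm:Meza-compact-metric-versus-conv} (which the paper imports from Meza-Alc\'antara's thesis) and Theorem~\ref{thm:char-conv} entirely, at the modest cost of having to justify that the embedding can be arranged to land in $\Q\cap[0,1]$; the paper's route only needs the weaker fact that $\omega^{\alpha+1}+1$ embeds as a closed subset of $[0,1]$, but leans on considerably heavier machinery already developed.
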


\begin{proof}
 Using Proposition~\ref{prop:omega_1-not-in-FinBW-conv-alpha} and Theorem~\ref{thm:KAT-vs-FinBW}, we obtain $\conv_\alpha\not\leq_K\conv$.
Now, suppose for the sake of contradiction that  
    $\conv \not\leq_K \conv_{\alpha}$ for some $\alpha$.
Then $[0,1]\in \FinBW(\conv_\alpha)$ by Theorem~\ref{thm:Meza-compact-metric-versus-conv}.
Since  the space  $\omega^{\alpha+1}+1$ is homeomorphic to a closed 
subset of $[0,1]$,
we obtain 
$\omega^{\alpha+1}+1\in \FinBW(\conv_\alpha)$.
Then by Theorem~\ref{thm:char-conv}, we obtain   
$\conv_{1+(\alpha+1)}\not\leq_K \conv_{\alpha}$, which in turn contradicts  Corollary~\ref{cor:Katetov-between-conv}.
\end{proof}

\begin{corollary}
\label{cor:omega_1}
\ 
\begin{enumerate}
    \item 
For any ideal $\I$, 
    $$\omega_1\in \FinBW(\I) \iff \conv_\alpha\not\leq_K\I \text{ for any $\alpha$.}$$

\item There is \emph{no} single  ideal $\I_{\omega_1}$ such that 
$$
\omega_1\in \FinBW(\I) \iff \I_{\omega_1}\not\leq_K \I.
$$
for any ideal $\I$.
\end{enumerate}
\end{corollary}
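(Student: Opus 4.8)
The plan is to read off both parts from results already proved, chiefly Theorem~\ref{thm:char-conv}, Theorem~\ref{thm:conv-vs-FinBW}(1), the Mazurkiewicz--Sierpi\'{n}ski theorem, and the strict monotonicity of the Kat\v{e}tov chain recorded in Corollary~\ref{cor:Katetov-between-conv}. Two elementary observations are also needed: that $\FinBW(\I)$ passes to closed subspaces (given a sequence in a closed $Y\subseteq X$, extract in the ambient $X$ a subsequence convergent along some $A\notin\I$; its limit lies in $\overline{Y}=Y$, so the subsequence converges in $Y$), and that a countable subset of $\omega_1$ is bounded because $\cf(\omega_1)>\omega$.

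For the forward implication of part (1), I would assume $\omega_1\in\FinBW(\I)$ and fix a countable $\beta\geq 2$, writing $\beta=1+\alpha$ for a suitable countable $\alpha$. The space $\omega^\alpha+1=[0,\omega^\alpha]$ is a closed subspace of $\omega_1$ (its subspace topology coincides with the order topology, since $\omega^\alpha<\omega_1$), so by the closed-subspace observation $\omega^\alpha+1\in\FinBW(\I)$, and Theorem~\ref{thm:char-conv} then gives $\conv_{\beta}=\conv_{1+\alpha}\not\leq_K\I$. As $\beta$ was arbitrary, $\conv_\beta\not\leq_K\I$ for all $\beta$. For the converse, assume $\conv_\alpha\not\leq_K\I$ for all $\alpha$; in particular $\conv_{1+\alpha}\not\leq_K\I$ for every countable $\alpha$, so Theorem~\ref{thm:conv-vs-FinBW}(1) yields $\mathbb{K}_\alpha\subseteq\FinBW(\I)$ for all $\alpha$, whence $\mathbb{K}=\bigcup_{\alpha<\omega_1}\mathbb{K}_\alpha\subseteq\FinBW(\I)$. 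Given any $f\colon\omega\to\omega_1$, its range is countable, hence bounded by some $\delta<\omega_1$, so $f$ maps into the compact countable space $\delta+1\in\mathbb{K}\subseteq\FinBW(\I)$; the convergent subsequence along some $A\notin\I$ also converges in $\omega_1$ since $\delta+1$ is a closed subspace. Thus $\omega_1\in\FinBW(\I)$.

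For part (2), I would argue by contradiction using a self-referential substitution. Suppose such an ideal $\I_{\omega_1}$ exists. Combining its defining equivalence with part (1) gives, for every ideal $\I$,
$$\I_{\omega_1}\leq_K\I\iff \exists\alpha\ \conv_\alpha\leq_K\I.$$
Substituting $\I=\I_{\omega_1}$, the left-hand side holds by reflexivity of $\leq_K$, so there is a fixed $\alpha_0$ with $\conv_{\alpha_0}\leq_K\I_{\omega_1}$. Substituting $\I=\conv_{\alpha_0+1}$, the right-hand side holds (witnessed by $\alpha_0+1$ itself via reflexivity), so $\I_{\omega_1}\leq_K\conv_{\alpha_0+1}$. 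By transitivity of $\leq_K$ we obtain $\conv_{\alpha_0}\leq_K\conv_{\alpha_0+1}$, contradicting Corollary~\ref{cor:Katetov-between-conv}, which asserts $\conv_{\alpha_0}\not\leq_K\conv_{\alpha_0+1}$.

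I expect the genuine (though short) content to be part (2): the hypothetical single critical ideal would be forced to sit, in the Kat\v{e}tov order, both above some member of the chain $(\conv_\alpha)_{\alpha<\omega_1}$ (by testing it against itself) and below the next member (by testing it against that member), which is impossible precisely because the chain is strictly decreasing. The only points in part (1) requiring a line of care are the passage of $\FinBW(\I)$ to closed subspaces and the boundedness of countable subsets of $\omega_1$; everything else is a direct invocation of the cited theorems.
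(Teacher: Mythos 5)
Your proposal is correct and follows essentially the same route as the paper: part (1) backward via the boundedness of countable subsets of $\omega_1$ together with Theorem~\ref{thm:char-conv}, and part (2) by trapping the hypothetical $\I_{\omega_1}$ between $\conv_{\alpha_0}$ and $\conv_{\alpha_0+1}$ and invoking the strictness of the chain. The only cosmetic difference is in the forward direction of (1), where the paper cites Proposition~\ref{prop:omega_1-not-in-FinBW-conv-alpha} together with Theorem~\ref{thm:KAT-vs-FinBW} instead of your (equally valid) closed-subspace heredity of $\FinBW(\I)$ plus Theorem~\ref{thm:char-conv}.
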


\begin{proof}
    (1, $\implies$)
Suppose that $\conv_\alpha\leq_K\I$ for some $\alpha$.
Then $\omega_1\in \FinBW(\I)\subseteq \FinBW(\conv_\alpha)$ by Theorem~\ref{thm:KAT-vs-FinBW}, a contradiction with Proposition~\ref{prop:omega_1-not-in-FinBW-conv-alpha}.

    (1, $\impliedby$)
    Suppose that $\omega_1\notin \FinBW(\I)$.
    Then there is $f:\omega\to \omega_1$ such that $f\restriction A$ is not convergent for any $A\notin \I$.
    Since $\omega_1$ has uncountable cofinality, there is a countable  $\alpha$  such that $f[\omega]\subseteq\omega^\alpha+1$. Then $f:\omega\to \omega^\alpha+1$, so $f$ is a witness for $\omega^\alpha+1\notin \FinBW(\I)$.
    Thus $\conv_{1+\alpha}\leq_K\I$ by Theorem~\ref{thm:char-conv}.

(2)
Suppose for the sake of contradiction that there is an ideal $\I_{\omega_1}$
such that 
$\omega_1\in \FinBW(\I) \iff \I_{\omega_1}\not\leq_K \I$ for any ideal $\I$.
Then $\omega_1\notin \FinBW(\I_{\omega_1})$, so by item (1) we find $\alpha_0$ with $\conv_{\alpha_0}\leq_K \I_{\omega_1}$.
We claim that $\I_{\omega_1}\not\leq_K \conv_{\alpha_0+1}$.
Indeed, otherwise we would obtain 
$\I_{\omega_1} \leq_K \conv_{\alpha_0+1} \leq_K\conv_{\alpha_0}\leq_K\I_{\omega_1}$, so $\conv_{\alpha_0}\leq_K\conv_{\alpha_0+1}$, a contradiction with Theorem~\ref{thm:Katetov-between-conv}.
Now, since $\I_{\omega_1}\not\leq_K \conv_{\alpha_0+1}$, we can use our assumption to obtain  $\omega_1\in \FinBW(\conv_{\alpha_0+1})$. 
Then by item (1) with $\alpha=\alpha_0+1$ we get 
$\conv_{\alpha_0+1}\not\leq_K \conv_{\alpha_0+1}$, a contradiction.
\end{proof}

\begin{lemma}
\label{lem:P-ideals}
If $\I$ and $\J$ are tall ideals, $\J$ is a P-ideal and $\J\not\leq_K\I$ then $\J\not\leq_K \I\otimes\Fin$.
\end{lemma}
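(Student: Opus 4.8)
The plan is to argue by contraposition: assuming $\J\leq_K\I\otimes\Fin$, I will build a witness for $\J\leq_K\I$, contradicting the hypothesis. So fix a function $f\colon\omega\times\omega\to Y$ witnessing $\J\leq_K\I\otimes\Fin$, where $Y$ denotes the underlying set of $\J$, and for each $n\in\omega$ let $f_n\colon\omega\to Y$, $f_n(m)=f(n,m)$, be the $n$-th column. Unravelling the definition of the Fubini product, the fact that $f$ is a witness says exactly that for every $A\in\J$ the set $S_A=\{n\in\omega:f_n^{-1}[A]\text{ is infinite}\}$ belongs to $\I$; this reformulation is what I would use throughout.

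Next I would split the columns according to their fibres, writing $N_0=\{n:f_n^{-1}(y)\text{ is infinite for some }y\in Y\}$ and $N_1=\omega\setminus N_0$, so that $f_n$ is finite-to-one exactly when $n\in N_1$. The set $N_0$ is harmless: for each $n\in N_0$ pick $g(n)\in Y$ with $f_n^{-1}(g(n))$ infinite, and note that $g(n)\in A$ then forces $f_n^{-1}[A]$ to be infinite, so $\{n\in N_0:g(n)\in A\}\subseteq S_A$ for every $A\in\J$. Consequently, once I know that $N_1\in\I$, I may define $g$ arbitrarily on $N_1$ and obtain $g^{-1}[A]\subseteq S_A\cup N_1\in\I$ for all $A\in\J$; that is, $g\colon\omega\to Y$ witnesses $\J\leq_K\I$, giving the desired contradiction. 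Thus the whole problem reduces to showing $N_1\in\I$.

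The heart of the argument, and the step I expect to be the main obstacle, is precisely $N_1\in\I$, and this is where tallness and the P-ideal property of $\J$ enter. Suppose toward a contradiction that $N_1\notin\I$; since $\I$ contains the finite sets, $N_1$ is infinite, so I enumerate it as $\{n_k:k\in\omega\}$. For $n\in N_1$ the map $f_n$ is finite-to-one, hence its range $R_n\subseteq Y$ is infinite, and tallness of $\J$ yields an infinite set $C_k\subseteq R_{n_k}$ with $C_k\in\J$. Because $\J$ is a P-ideal, I can choose a single $A\in\J$ with $C_k\setminus A$ finite for every $k$; then $A\cap C_k$ is infinite, so $A\cap R_{n_k}$ is infinite, and, using that $f_{n_k}$ is finite-to-one, $f_{n_k}^{-1}[A]$ is infinite as well. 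Hence $N_1\subseteq S_A$, so $S_A\notin\I$, contradicting the reformulated witness property for $A\in\J$. This forces $N_1\in\I$ and completes the proof. The only genuinely delicate point is the simultaneous capture of infinitely many ranges $R_{n_k}$ by one member of $\J$, which is exactly what the P-ideal hypothesis is for, while tallness guarantees that each $R_{n_k}$ contains an element of $\J$ to begin with.
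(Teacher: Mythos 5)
Your proof is correct and follows essentially the same route as the paper's: both split the columns of $f$ according to whether $f$ is finite-to-one on them, handle the finite-to-one columns by combining tallness of $\J$ with the P-ideal property, and turn the remaining columns (each containing an infinite fibre) into a witness for $\J\leq_K\I$. The only real difference is that you argue contrapositively and absorb the $\I$-small set of finite-to-one columns directly into the preimage estimate $g^{-1}[A]\subseteq S_A\cup N_1$, which lets you bypass the paper's appeal to the isomorphism $\I\approx\I\restriction(\omega\setminus T)$ and, in fact, never uses tallness of $\I$.
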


\begin{proof}
Let $f:\omega\times \omega\to\omega$ be arbitrary. We need to find $A\in\J$ such that $f^{-1}[A]\notin\I\otimes\Fin$. Denote
\[
T=\{n\in\omega: f\restriction (\{n\}\times\omega)\text{ is finite-to-one}\}.
\]
There are two possibilities: either $T\in \I$ or $T\notin\I$. 

If $T\in\I$, for each $n\in\omega\setminus T$ find an infinite $A_n\subseteq\{n\}\times\omega$ and $g(n)\in\omega$ such that $f[A_n]=\{g(n)\}$. Since $\I$ and $\I\restriction (\omega\setminus T)$ are isomorphic (by \cite[Proposition 1.2]{MR3594409} as $T\in\I$ and $\I$ is tall)  
and $\J\not\leq_K\I$, there is $A\in\J$ such that $g^{-1}[A]\notin\I\restriction (\omega\setminus T)$. Then 
$$f^{-1}[A]\supseteq\bigcup_{n\in g^{-1}[A]}A_n\notin\I\otimes\Fin,$$ 
so $A$ is the required set.

If $T\notin\I$, using tallness of $\J$, for each $n\in T$ find infinite $B_n\in\J$ such that $B_n\subseteq f[\{n\}\times\omega]$. Since $\J$ is a P-ideal,  there is $A\in\J$ such that $B_n\setminus A\in\Fin$ for all $n\in\omega$. In particular, 
$$f^{-1}[A]\cap(\{n\}\times\omega)\supseteq f^{-1}[A\cap B_n]\cap(\{n\}\times\omega)\notin\Fin$$ 
for all $n\in T$. Hence, $f^{-1}[A]\notin\I\otimes\Fin$.
\end{proof}

\begin{theorem}
There are $2^\omega$ many pairwise $\leq_K$-incomparable $\Sigma^0_4$ ideals that are above $\conv$, but not above any $\conv_\alpha$
in the Kat\v{e}tov order.
\end{theorem}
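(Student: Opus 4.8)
The plan is to mimic the construction of the $\Pi^0_5$ antichain from the previous theorem, but with the density ideal $\I_d$ playing the role that $\conv_\alpha$ played there. Starting from the family $\{\I_\beta:\beta<2^\omega\}$ of pairwise $\leq_K$-incomparable tall summable ideals provided by \cite{MR3513296}, I would set
$$\J_\beta=\left(\I_\beta\otimes\Fin\right)\cap\left(\{\emptyset\}\otimes\I_d\right)$$
on $\omega\times\omega$ and claim that $\{\J_\beta:\beta<2^\omega\}$ is the required family. The complexity bound is immediate: $\I_\beta\otimes\Fin$ is $\Sigma^0_4$ (as $\I_\beta$ is $\Sigma^0_2$), while $\{\emptyset\}\otimes\I_d$ is $\Pi^0_3\subseteq\Sigma^0_4$ (a countable intersection of the $\Pi^0_3$ conditions ``$A_{(i)}\in\I_d$''), so their intersection is $\Sigma^0_4$.

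The two ``$\FinBW$-side'' requirements are the easy part and rest on the single fact separating $\I_d$ from $\conv$: one has $\omega^\delta+1\in\FinBW(\I_d)$ for every $\delta$ (from $\omega_1\in\FinBW(\I_d)$) but $[0,1]\notin\FinBW(\I_d)$ (Proposition~\ref{prop:conv-leq-Id} and Theorem~\ref{thm:Meza-compact-metric-versus-conv}). For ``not above any $\conv_\alpha$'', by Corollary~\ref{cor:omega_1}(1) it suffices to check $\omega_1\in\FinBW(\J_\beta)$, and here a single column does the job: given $f\colon\omega\times\omega\to\omega^\delta+1$, pick $B\notin\I_d$ with $f(i_0,\cdot)\restriction B$ convergent; then $\{i_0\}\times B\notin\J_\beta$ (its $i_0$-th column is $\I_d$-positive) and $f$ restricted to it converges. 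For ``above $\conv$'', i.e. $\conv\leq_K\J_\beta$, equivalently $[0,1]\notin\FinBW(\J_\beta)$, I would exhibit a witnessing sequence: fix pairwise disjoint closed intervals $\overline{U_i}\subseteq[0,1]$ and a sequence $h$ in $[0,1]$ with no $\I_d$-positive convergent subsequence, and put $f(i,m)=\psi_i(h(m))$ for homeomorphisms $\psi_i\colon[0,1]\to\overline{U_i}$. If $f\restriction A$ converged to some $q$, then every infinite column of $A$ would converge to $q$; an $\I_d$-positive column would force an $\I_d$-positive convergent subsequence of $h$, impossible, while the remaining case leaves infinitely many infinite (null) columns, all with $q\in\overline{U_i}$, contradicting disjointness. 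Hence no $\J_\beta$-positive $A$ gives convergence.

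The main obstacle is the \emph{pairwise incomparability} $\J_\beta\not\leq_K\J_\gamma$, and this is exactly what Lemma~\ref{lem:antichain} is designed to deliver, applied with $\I_0=\I_\beta$, $\I_1=\I_\gamma$ and $\I=\I_d$ (its conclusion being precisely $\J_\beta\not\leq_K\J_\gamma$, and symmetrically with $\beta,\gamma$ swapped). All of its hypotheses are routine except the crucial one: $\I_d\not\leq_K\K$ where $\K=((\I_\gamma\restriction A)\otimes\Fin)\cap(\{\emptyset\}\otimes\I_d)$ for $A\notin\I_\gamma$. Here Lemma~\ref{lem:P-ideals} enters. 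Since $\K\subseteq(\I_\gamma\restriction A)\otimes\Fin$, a putative reduction $\I_d\leq_K\K$ would (by $\K\leq_K(\I_\gamma\restriction A)\otimes\Fin$ via the identity and transitivity) yield $\I_d\leq_K(\I_\gamma\restriction A)\otimes\Fin$. But $\I_d$ is a tall P-ideal, $\I_\gamma\restriction A$ is a tall summable ideal, and $\I_d\not\leq_K\I_\gamma\restriction A$: indeed $[0,1]\in\FinBW(\I_\gamma\restriction A)$ (the ``finite chunk of large measure'' argument available for summable ideals) while $[0,1]\notin\FinBW(\I_d)$, so $\FinBW(\I_\gamma\restriction A)\not\subseteq\FinBW(\I_d)$ and Theorem~\ref{thm:KAT-vs-FinBW} applies. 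Lemma~\ref{lem:P-ideals} then upgrades $\I_d\not\leq_K\I_\gamma\restriction A$ to $\I_d\not\leq_K(\I_\gamma\restriction A)\otimes\Fin$, the desired contradiction. I expect the verification of the remaining bookkeeping hypotheses of Lemma~\ref{lem:antichain} to absorb most of the care, but no new idea beyond the above should be needed.
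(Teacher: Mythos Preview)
Your proposal is correct and follows essentially the same approach as the paper: the same family $\J_\beta=(\I_\beta\otimes\Fin)\cap(\{\emptyset\}\otimes\I_d)$, the same appeal to Lemma~\ref{lem:antichain} for incomparability with the key hypothesis discharged via Lemma~\ref{lem:P-ideals}, and the same complexity count. The only cosmetic difference is that you route the verifications of ``above $\conv$'' and ``not above any $\conv_\alpha$'' through the $\FinBW$ characterizations (Theorem~\ref{thm:Meza-compact-metric-versus-conv} and Corollary~\ref{cor:omega_1}), whereas the paper gives a direct Kat\v{e}tov witness for $\conv\leq_K\J_\beta$ and a column-restriction argument for $\conv_\alpha\not\leq_K\J_\beta$; these are equivalent formulations of the same facts.
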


\begin{proof}
By \cite[Theorem 1]{MR3513296} (see also [Corollary 3.6]), there is a family of ideals $\{\I_\beta:\beta<2^\omega\}$ such that each $\I_\beta$ is a tall summable ideal and $\I_\beta\not\leq_K\I_\gamma$ for all distinct $\beta,\gamma<2^\omega$.

Define $\J_\beta=(\I_\beta\otimes\Fin)\cap (\{\emptyset\}\otimes\I_d)$ for all $\beta<2^\omega$. 

Now we show that $\conv_\alpha\not\leq_K\J_\beta$ for all $\beta<2^\omega$ and all $\alpha$. Indeed, if some $\J_\beta$ would be $\leq_K$-above some $\conv_\alpha$, then also $\J_\beta\restriction (\{0\}\times\omega)$ would be $\leq_K$-above $\conv_\alpha$. However, $\J_\beta\restriction (\{0\}\times\omega)$ is isomorphic to $\I_d$, so it would contradict Corollary \ref{cor:omega_1} and \cite[Proposition~11.1(a)]{MR4584767} (where it is shown that $\omega_1\in\FinBW(\I_d)$). Hence, $\conv_\alpha\not\leq_K\J_\beta$.

To show that $\conv\leq_K\J_\beta$ for all $\beta<2^\omega$, let $(I_n)_{n\in\omega}$ be a sequence of pairwise disjoint closed subintervals of $[0,1]$ with rational endpoints and such that $(I_n)_{n\in\omega}$ converges to $0$ (that is, if $x_n\in I_n$ for all $n$, then $\lim_n x_n=0$). For each $n\in\omega$, since $\conv\restriction (I_n\cap\mathbb{Q})$ and $\conv$ are isomorphic and $\conv\leq_K \I_d$ (by Proposition \ref{prop:conv-leq-Id}), there is $g_n:\omega\to I_n\cap\mathbb{Q}$ witnessing $\conv\restriction (I_n\cap\mathbb{Q})\leq_K\I_d$. Let $f:\omega\times \omega \to[0,1]\cap\mathbb{Q}$ be given by $f(n,k)=g_n(k)$. Then $f$ witnesses $\conv\leq_K\J_\beta$.

Now we prove that $\J_\beta\not\leq_K\J_\gamma$ for all distinct $\beta,\gamma<2^\omega$. This will follow from Lemma \ref{lem:antichain} once we show that all its assumptions are met. It is known that $\I_d$ is a tall P-ideal and $\I_d\not\leq_K\cK$ for every summable ideal $\cK$ (by
Proposition~\ref{prop:conv-leq-Id} and 
\cite[Corollary~3.14]{MR3692233} where the authors proved that $\conv\not\leq_K\I$ for any $F_\sigma$ ideal $\I$). 
Hence, $\I_d\not\leq_K\cK\otimes\Fin$ for every summable ideal $\cK$ (by Lemma \ref{lem:P-ideals}). Since $\I_\gamma\restriction A$ is a summable ideal, for every $A\notin\I_\gamma$, we get that $\I_d\not\leq_K((\I_\gamma\restriction A)\otimes\Fin)\cap (\{\emptyset\}\otimes\I_d)$ (since $((\I_\gamma\restriction A)\otimes\Fin)\cap (\{\emptyset\}\otimes\I)\subseteq (\I_\gamma\restriction A)\otimes\Fin$). 

Finally, observe that $\{\emptyset\}\otimes\I_d$ is $\Pi^0_3$ 
by \cite[Proposition~1.6.16]{alcantara-phd-thesis} and the fact that
$\I_d$ is $\Pi^0_3$ and $\{\emptyset\}$ is $\Pi^0_1$, while $\I_\beta\otimes\Fin$ is $\Sigma^0_4$ 
by \cite[Proposition~1.6.16]{alcantara-phd-thesis} and the fact that
$\I_\beta$ and $\Fin$ are  $\Sigma^0_2$.
\end{proof}


\section{Additional  properties of critical ideals}
\label{sec:properties-of-critical-ideals}


\subsection{Critical ideals have the property KAT}

We say that an ideal $\I$ on $X$  \emph{contains an isomorphic copy} of  an ideal $\J$ on $Y$ (in short $\J\sqsubseteq\I$) if there is a bijection  $f:X\to Y$ such that  $f^{-1}[A]\in\I$ 
for every $A\in \J$. 
In  \cite[Lemma~3.3]{MR3034318}, the authors showed that 
if $\J$ is tall, then we can only require that $f$ is one-to-one in the definition of $\sqsubseteq$.

    \begin{proposition}
    \label{prop:KAT-for-conv}
        $\conv_{\alpha} \sqsubseteq \conv_{\alpha} \otimes \{\emptyset\}$ for every  $\alpha$.
    \end{proposition}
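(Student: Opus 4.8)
The plan is to use the reduction indicated in the remark preceding the statement. Since $\conv_\alpha$ is tall (Lemma~\ref{lem:conv-ideal-via-derivative-set}(\ref{lem:conv-ideal-via-derivative-set:item-4})), by \cite[Lemma~3.3]{MR3034318} it suffices to produce a \emph{one-to-one} map $f\colon(\omega^\alpha+1)\times\omega\to\omega^\alpha+1$ with $f^{-1}[A]\in\conv_\alpha\otimes\{\emptyset\}$ for every $A\in\conv_\alpha$. Writing $C_x=f[\{x\}\times\omega]$, injectivity of $f$ is exactly the requirement that the sets $(C_x)_{x\in\omega^\alpha+1}$ be pairwise disjoint and infinite, and unwinding the definition of the Fubini product gives $f^{-1}[A]\in\conv_\alpha\otimes\{\emptyset\}$ iff $\{x:C_x\cap A\neq\emptyset\}\in\conv_\alpha$. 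Letting $\iota$ be the ``index'' map sending each point of $\bigcup_x C_x$ to the unique $x$ with that point in $C_x$, the whole statement reduces to constructing pairwise disjoint infinite sets $(C_x)$ so that $\iota[A]\in\conv_\alpha$ for every $A\in\conv_\alpha$.

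Since $A\mapsto\iota[A]$ commutes with finite unions, $\conv_\alpha$ is by definition generated by ranges of convergent sequences, and membership in $\conv_\alpha$ is detected by finiteness of the derived set (Lemma~\ref{lem:conv-ideal-via-derivative-set}(\ref{lem:conv-ideal-via-derivative-set:item-1})), it is enough to verify the single property $(\star)$: the set $\iota[S]^d$ is finite for every convergent sequence $S$ in $\omega^\alpha+1$. I would build the family $(C_x)$ — equivalently the map $\iota$ — by recursion on $\alpha$. First decompose $[0,\omega^\alpha)$ into the half-open blocks $[\omega^\beta\cdot m,\omega^\beta\cdot (m+1))$ when $\alpha=\beta+1$, or $[\omega^{\alpha_n},\omega^{\alpha_{n+1}})$ for a fixed sequence $\alpha_n\nearrow\alpha$ when $\alpha$ is limit; by indecomposability (Lemma~\ref{lem:indecomposable}) each block is order-isomorphic to a strictly smaller power of $\omega$, and the blocks tile $[0,\omega^\alpha)$ with no overlap. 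On each block I install, through the order isomorphism, the family furnished by the inductive hypothesis for that smaller power, arranging that $\iota$ maps each block \emph{into itself}; the base step is the map $m\mapsto (m)_0$ on $\omega$ induced by a fixed bijection $\omega\cong\omega\times\omega$, whose fibers are infinite and whose image of any set has derived set contained in the single top point of the block. Finally I assign the top point $\omega^\alpha$ its own fiber by stealing one point from each block (a sequence converging to $\omega^\alpha$), which leaves every other fiber infinite.

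Property $(\star)$ is then checked by classifying a convergent sequence $S\to p$ according to $p$. If $p$ lies strictly inside a block, then $S$ is eventually contained in that block and $(\star)$ follows from the inductive hypothesis applied inside the block. If $p$ is a block boundary or the global top $\omega^\alpha$, then $S$ meets each block in a finite set and climbs through the blocks; because $\iota$ keeps each block inside its own closure, the image $\iota[S]$ likewise climbs and accumulates only at $p$, so $\iota[S]^d\subseteq\{p\}$. In every case $\iota[S]^d$ is finite, which yields $(\star)$ and hence the proposition.

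The genuine obstacle is the tension between demanding that \emph{every} fiber $C_x$ be infinite and demanding that images of convergent sequences stay small. These two demands are incompatible for a continuous $\iota$: already on $\omega+1$ a continuous surjection would send a convergent sequence to a convergent sequence, forcing all but one fiber to be finite. The proof must therefore exploit the slack that $(\star)$ permits \emph{finitely many} accumulation points rather than at most one, and the block-into-itself invariant is exactly the device that converts this slack into a clean recursion. The remaining, more routine, care is bookkeeping: equipping the limit ordinals that serve as block boundaries, and the top point, with infinite fibers without creating spurious accumulation points, which the half-open tiling and the one-point-per-block reservation handle.
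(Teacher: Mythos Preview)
Your recursive block-decomposition approach differs from the paper's direct construction (the paper simply sets $C_\xi=[\omega\cdot(1+\xi),\,\omega\cdot(1+\xi+1))$ for $\xi<\omega^\alpha$ and $C_{\omega^\alpha}=[0,\omega)$, then appeals to Proposition~\ref{prop:gwiazdka}), and it can be made to work, but your verification of $(\star)$ as written has a genuine gap.

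In your case~2 you assert that when $p$ is a block boundary, ``$S$ meets each block in a finite set.'' This is false. Take $\alpha=\beta+1$ and $p=\omega^\beta\cdot m$ with $m\ge1$, the left endpoint of $B_m$. Any injective sequence $S\to p$ is eventually contained in the neighborhood $(\omega^\beta\cdot(m-1),\,p]$, and hence meets $B_{m-1}=[\omega^\beta\cdot(m-1),\,\omega^\beta\cdot m)$ in an \emph{infinite} set. So your climbing argument does not apply here, and nothing you have said rules out $\iota[S\cap B_{m-1}]$ having infinitely many accumulation points inside $\overline{B_{m-1}}$. The repair is to route this sub-case through the inductive hypothesis rather than the climbing argument: $\overline{B_{m-1}}=[\omega^\beta\cdot(m-1),\,\omega^\beta\cdot m]$ is order-isomorphic to $\omega^\beta+1$, the tail of $S$ is a convergent sequence there (to the top point), and your $\iota$ on $B_{m-1}$ agrees with the transferred inductive map modulo finitely many points (the stolen point, the discarded top fiber), so the hypothesis gives $\iota[S\cap B_{m-1}]^d$ finite. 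The correct dichotomy is therefore $p<\omega^\alpha$ (then $S$ is eventually in the closure of a single block and induction applies) versus $p=\omega^\alpha$ (then $S$ really does meet each block finitely and your climbing argument is valid); the same correction applies verbatim in the limit case with $B_{m-1}$ replaced by $[\omega^{\alpha_{n-1}},\omega^{\alpha_n})$.
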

    \begin{proof}
        Let $f: (\omega^{\alpha}+1) \times \omega \to   \omega^{\alpha}+1$ be a one-to-one function such that  
 $f[\{\omega^{\alpha}\}\times \omega]=[0,\omega)$
 and 
for each $\xi \in \omega^{\alpha} $ we have
$$f[\{\xi\}\times \omega] =  [\omega \cdot (1+\xi), \omega \cdot (1+\xi + 1)).$$ 
We claim that $f$ is a witness for $\conv_\alpha\sqsubseteq\conv_\alpha\otimes\{\emptyset\}$.
Since the ideal $\conv_\alpha$ is tall
(by Lemma~\ref{lem:conv-ideal-via-derivative-set}(\ref{lem:conv-ideal-via-derivative-set:item-4})) and the function  $f$ is one-to-one, we only need to show that $f^{-1}[A]\in \conv_\alpha\otimes\{\emptyset\}$ for every $A\in \conv_\alpha$.
Take any $A\in \conv_{\alpha}$ and suppose for the sake of contradiction that $f^{-1}[A]\notin \conv_\alpha\otimes\{\emptyset\}$.
Then 
$$B =  \{\xi\in \omega^\alpha+1: f^{-1}[A] \cap (\{\xi\}\times \omega)\neq\emptyset\}\notin \conv_\alpha.$$
By Proposition~\ref{prop:gwiazdka}, there exists an increasing sequence $(\lambda_n)_{n\in \omega}$ in $\omega^\alpha$ 
and an infinte set $C\subseteq\omega$ such that 
the intersection $B\cap [\lambda_n,\lambda_{n+1})$ is infinite for each  $n\in C$.
Then for each $n\in C$ we can find $\tau_n\in [\omega\cdot(1+\lambda_n),\omega\cdot(1+\lambda_{n+1}+1)]\cap A^d$.
Consequently, $A^d$ is infinite, so $A\notin \conv_\alpha$, a contradiction.
\end{proof}

The following corollary shows that the ideals $\conv_\alpha$ have the property $Kat$ i.e.~we can replace arbitrary function by  a bijection or a finite-to-one function when comparing the ideals $\conv_\alpha$ with other ideals in the Kat\v{e}tov order (this phenomenon for arbitrary ideals was introduced and examine in details in \cite{MR3034318}). 

We say that an ideal $\J$ on $Y$  is below an ideal $\I$ on $X$  in  the \emph{Kat\v{e}tov-Blass order} (in short $\J\leq_{KB}\I$) if there is a finite-to-one function $f:X\to Y$ such that $f^{-1}[A]\in\I$ for every $A\in \I$.

    \begin{corollary}
    For any ideal $\I$, 
    $$ \conv_\alpha \leq_K \I \iff \conv_\alpha\leq_{KB} \I \iff \conv_\alpha \sqsubseteq \I.$$
    \end{corollary}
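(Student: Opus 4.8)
The plan is to prove the chain of implications
$$\conv_\alpha \sqsubseteq \I \implies \conv_\alpha \leq_{KB} \I \implies \conv_\alpha \leq_K \I \implies \conv_\alpha \sqsubseteq \I,$$
which yields all three equivalences at once. The first two implications are immediate from the definitions: a bijection witnessing $\sqsubseteq$ is in particular finite-to-one and hence witnesses $\leq_{KB}$, and a finite-to-one function is a function, so it witnesses $\leq_K$. Thus the entire content lies in the last implication, which is the genuine \emph{property Kat} statement and where the self-similarity from Proposition~\ref{prop:KAT-for-conv} does the work.

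For $\conv_\alpha \leq_K \I \implies \conv_\alpha \sqsubseteq \I$, I would start with a witness $f\colon X\to\omega^\alpha+1$ for $\conv_\alpha\leq_K\I$, so $f^{-1}[A]\in\I$ for every $A\in\conv_\alpha$. Let $g\colon(\omega^\alpha+1)\times\omega\to\omega^\alpha+1$ be the one-to-one map from Proposition~\ref{prop:KAT-for-conv} witnessing $\conv_\alpha\sqsubseteq\conv_\alpha\otimes\{\emptyset\}$. The key move is to \emph{resolve the fibres} of $f$: for each $\xi\in\omega^\alpha+1$ fix an injective enumeration of $f^{-1}(\xi)$ and define $F\colon X\to(\omega^\alpha+1)\times\omega$ by sending the $k$-th element of $f^{-1}(\xi)$ to $(\xi,k)$. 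Then $F$ is one-to-one and $\proj_1\circ F=f$. Setting $h=g\circ F$ produces a one-to-one map $h\colon X\to\omega^\alpha+1$.

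To verify that $h$ witnesses $\conv_\alpha\sqsubseteq\I$, fix $A\in\conv_\alpha$. By the choice of $g$ we have $g^{-1}[A]\in\conv_\alpha\otimes\{\emptyset\}$, so its first projection $B=\{\xi:(g^{-1}[A])_{(\xi)}\neq\emptyset\}$ lies in $\conv_\alpha$. Since $g^{-1}[A]\subseteq B\times\omega$ and $\proj_1\circ F=f$,
$$h^{-1}[A]=F^{-1}[g^{-1}[A]]\subseteq F^{-1}[B\times\omega]=f^{-1}[B]\in\I,$$
and therefore $h^{-1}[A]\in\I$ because $\I$ is closed under subsets. As $\conv_\alpha$ is tall (Lemma~\ref{lem:conv-ideal-via-derivative-set}(\ref{lem:conv-ideal-via-derivative-set:item-4})), the cited \cite[Lemma~3.3]{MR3034318} lets me upgrade the one-to-one $h$ to the bijection demanded by $\sqsubseteq$; and $h$, being one-to-one and hence finite-to-one, simultaneously witnesses $\leq_{KB}$. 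The main obstacle is entirely packaged into Proposition~\ref{prop:KAT-for-conv}: once the self-similarity $\conv_\alpha\sqsubseteq\conv_\alpha\otimes\{\emptyset\}$ is in hand, the fibre-resolving composition is routine and requires no further analysis of the structure of $\conv_\alpha$.
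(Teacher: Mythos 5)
Your proof is correct and takes essentially the same route as the paper: both arguments rest entirely on the self-similarity $\conv_\alpha\sqsubseteq\conv_\alpha\otimes\{\emptyset\}$ established in Proposition~\ref{prop:KAT-for-conv}, together with the tallness of $\conv_\alpha$ to upgrade a one-to-one witness to a bijection. The only difference is that the paper invokes \cite[Theorem~3.4]{MR3034318} as a black box for the implication $\leq_K\Rightarrow\sqsubseteq$, whereas you reprove that implication inline via the fibre-resolving composition $h=g\circ F$, which is a correct, self-contained rendering of the same argument.
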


\begin{proof}
It follows from Proposition~\ref{prop:KAT-for-conv} and \cite[Theorem~3.4]{MR3034318}.
\end{proof}

Using Proposition~\ref{prop:conv-for-1-2-3}, we can see that the above corollary extends \cite[Theorem~6.2]{MR2899832} and \cite[Proposition~4.4]{MR4584767}, where the authors proved the above corollary for $\Fin\otimes\Fin\approx \conv_2$ and $\BI\approx\conv_3$, respectively.


\subsection{Borel complexity and \texorpdfstring{$P^-$}{P-} property}

An ideal $\I$ on $X$ is $P^-$ (a.k.a. \emph{hereditary weak P}) if for every partition $\cA$ of  any set $C\in \I^+$ into sets from $\I$ there exists  $B\in \I^+$ such that $B\subseteq C$ and  $B\cap A$ is finite for each $A\in \cA$ (see \cite[p.~2030]{MR3692233} and \cite[Definition~4.8]{MR4584767}, resp.).
It is known that $\conv$ is not a $P^-$ ideal (see e.g.~\cite[proof of Proposition 4.10(b)]{MR4584767}), however there are $P^-$ ideals which are above $\conv$ in the Kat\v{e}tov order
(for instance, $\I_d$ is $P^-$ and $\conv\leq_K\I_d$ by Proposition~\ref{prop:conv-leq-Id}).
Every ideal $\conv_\alpha$ is above the ideal $\conv$ in the Kat\v{e}tov order, however there is no $P^-$ ideal above any $\conv_\alpha$ ideal as shown by the following proposition.

\begin{proposition}
\label{prop:Pminus-versus-conv-alpha}
If  $\I$ is a $P^-$ ideal, then $\conv_{\alpha} \nleq_K \I$ for every $\alpha$.
In particular, $\conv_\alpha$ is not  $P^-$ for any $\alpha$.
\end{proposition}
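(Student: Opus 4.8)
The plan is to argue by contradiction. Suppose $\I$ is $P^-$ (and, as is standard, proper, so $\omega\in\I^+$) and that $\conv_\alpha\leq_K\I$, witnessed by some $f\colon\omega\to\omega^\alpha+1$; thus $A\in\conv_\alpha$ implies $f^{-1}[A]\in\I$, equivalently $f^{-1}[A]\in\I^+$ implies $A\notin\conv_\alpha$. I would run an induction on $\alpha$, the driving device being a partition of $\omega^\alpha+1$ into ``columns'' whose $f$-preimage is fed into the $P^-$ property. Fix an increasing sequence $(\mu_n)_{n<\omega}$ cofinal in $\omega^\alpha$ with $\mu_0=0$, put $J_n=(\mu_n,\mu_{n+1}]$, and note that $\{0\}\cup\bigcup_n J_n\cup\{\omega^\alpha\}$ partitions $\omega^\alpha+1$, while each $J_n$ is order-isomorphic to $\omega^{\beta_n}+1$ for a suitable $\beta_n<\alpha$. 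In the successor case $\alpha=\delta+1$ take $\mu_n=\omega^\delta\cdot n$, so $\beta_n=\delta$; in the limit case take $\mu_n=\omega^{\alpha_n}$ for an increasing $(\alpha_n)$ with $\alpha_1\geq 2$ and $\sup_n\alpha_n=\alpha$, so $2\leq\beta_n=\alpha_{n+1}<\alpha$.

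The partition of $\omega^\alpha+1$ pulls back to a partition of $\omega$ into the pieces $f^{-1}[J_n]$ together with the two finite-image (hence $\I$-small) pieces $f^{-1}[\{0\}]$ and $f^{-1}[\{\omega^\alpha\}]$. The crucial dichotomy is whether \emph{every} $f^{-1}[J_n]\in\I$ or \emph{some} $f^{-1}[J_n]\in\I^+$. In the first alternative the whole family is a partition of $\omega\in\I^+$ into members of $\I$, so $P^-$ furnishes $B\in\I^+$ with $B\cap f^{-1}[J_n]$ finite for all $n$. Then $f[B]\cap J_n$ is finite for each $n$, and a short neighbourhood check (via Lemma~\ref{lem:conv-ideal-via-derivative-set}) shows $f[B]^d\subseteq\{\omega^\alpha\}$, so $f[B]\in\conv_\alpha$; but $B\subseteq f^{-1}[f[B]]\in\I$ contradicts $B\in\I^+$. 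This alternative alone settles the base case $\alpha=2$, which is special precisely because there $J_n^d=\{\mu_{n+1}\}$ is finite, so each $J_n\in\conv_2$, forcing every $f^{-1}[J_n]\in\I$; the second alternative cannot occur.

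For the inductive step $\alpha\geq 3$ I handle the second alternative: some $C=f^{-1}[J_n]\in\I^+$. Here I use two auxiliary facts. First, $P^-$ passes to restrictions, so $\I\restriction C$ is again $P^-$ (immediate from the definition of $P^-$ applied to subsets of $C$). Second, $\conv(J_n)=\conv_\alpha\restriction J_n$, since both consist of the subsets of $J_n$ with finite derived set (Lemma~\ref{lem:conv-ideal-via-derivative-set}, using that accumulation points of a subset of $J_n$ stay inside $J_n$). Consequently, for $A\in\conv(J_n)\subseteq\conv_\alpha$ we have $(f\restriction C)^{-1}[A]=f^{-1}[A]\cap C\in\I\restriction C$, so $f\restriction C$ witnesses $\conv(J_n)\leq_K(\I\restriction C)$; transporting along the isomorphism $J_n\cong\omega^{\beta_n}+1$ gives $\conv_{\beta_n}\leq_K(\I\restriction C)$ with $2\leq\beta_n<\alpha$, contradicting the induction hypothesis. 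The ``in particular'' clause then follows by taking $\I=\conv_\alpha$ witnessed by the identity: were $\conv_\alpha$ a $P^-$ ideal, the proposition would yield $\conv_\alpha\not\leq_K\conv_\alpha$, which is absurd.

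I expect the main obstacle to be organizing this dichotomy correctly and recognizing that the positive-column case must be recycled through the induction via restriction rather than resolved directly. The delicate points are the two auxiliary facts underpinning that reduction (stability of $P^-$ under restriction and the identity $\conv(J_n)=\conv_\alpha\restriction J_n$), together with the observation that the base case $\alpha=2$ differs only in that its columns already lie in $\conv_2$, so no positive column can arise and the entire argument collapses to a single application of $P^-$.
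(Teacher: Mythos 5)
Your proof is correct, but it follows a genuinely different route from the paper's. The paper disposes of the proposition in two lines: it quotes the external fact that every $P^-$ ideal $\I$ satisfies $\omega_1\in\FinBW(\I)$ and then invokes Corollary~\ref{cor:omega_1}(1), which itself rests on Theorem~\ref{thm:char-conv}; so the paper's argument is topological and leans on the already-built $\FinBW$ machinery. You instead give a direct combinatorial induction on $\alpha$: partition $\omega^\alpha+1$ into clopen intervals $J_n$ of strictly smaller rank, pull the partition back along a putative Kat\v{e}tov witness, and split into the case where all columns are $\I$-small (where one application of $P^-$ produces $B\in\I^+$ with $f[B]^d\subseteq\{\omega^\alpha\}$, hence $f[B]\in\conv_\alpha$ by Lemma~\ref{lem:conv-ideal-via-derivative-set}, a contradiction) and the case of a positive column (which, after noting that $P^-$ is inherited by restrictions and that $\conv(J_n)=\conv_\alpha\restriction J_n$ transports to $\conv_{\beta_n}$ along the order isomorphism $J_n\cong\omega^{\beta_n}+1$, recycles through the inductive hypothesis). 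All the steps check out: the base case $\alpha=2$ is correctly isolated (there every $J_n$ already lies in $\conv_2$, so no positive column exists), the clopenness of the $J_n$ justifies both the identification of derived sets and the transport of $\conv(J_n)$, and the standing assumptions ($\I$ proper, restriction to an $\I$-positive set) are handled. The only blemish is a harmless indexing slip in the limit case ($\mu_0=0$ is not of the form $\omega^{\alpha_0}$; just set $\mu_0=0$ and $\mu_n=\omega^{\alpha_n}$ for $n\geq 1$). What each approach buys: the paper's proof is essentially free given Section~4 and the cited result on $P^-$ ideals and $\omega_1$, while yours is self-contained, avoids both the external citation and Theorem~\ref{thm:char-conv}, and makes visible exactly where the $P^-$ property interacts with the rank of the ordinal.
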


\begin{proof}
If $\I$ is a $P^-$ ideal, then $\omega_1\in\FinBW(\I)$ by \cite[Proposition~6.1]{MR4584767}. Thus, Corollary \ref{cor:omega_1} finishes the proof.
\end{proof}

\begin{corollary}
\label{cor:conv-alpha-is-not-Pi-0-4}
$\conv_\alpha\in \Sigma^0_4\setminus \Pi^0_4$ for every $\alpha$.
\end{corollary}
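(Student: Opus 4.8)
The plan is to combine the two complexity-relevant facts already in hand. The inclusion $\conv_\alpha\in\Sigma^0_4$ is exactly Proposition~\ref{prop:Borel_complexity}, so the only thing left is to rule out $\conv_\alpha\in\Pi^0_4$. For this I would exploit the fact, just established in Proposition~\ref{prop:Pminus-versus-conv-alpha}, that $\conv_\alpha$ is \emph{not} a $P^-$ ideal. The bridge between these two observations is the known dichotomy from the theory of Borel ideals asserting that every $\Pi^0_4$ ideal has property $P^-$ (this is a descriptive-set-theoretic regularity result for low-complexity ideals, available in the literature on the Kat\v{e}tov order on Borel ideals, e.g.~\cite{MR3692233}).

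Granting that result, the argument is short: if $\conv_\alpha$ were $\Pi^0_4$, it would be $P^-$, contradicting Proposition~\ref{prop:Pminus-versus-conv-alpha}. Hence $\conv_\alpha\notin\Pi^0_4$, and together with $\conv_\alpha\in\Sigma^0_4$ from Proposition~\ref{prop:Borel_complexity} we obtain $\conv_\alpha\in\Sigma^0_4\setminus\Pi^0_4$, as claimed. Note that this also sharpens Proposition~\ref{prop:Borel_complexity} in the sense promised in the text preceding it, showing the $\Sigma^0_4$ bound is optimal and cannot be lowered to $\Pi^0_4$.

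The main obstacle is not the logical skeleton, which is essentially immediate, but making sure the external ``$\Pi^0_4\Rightarrow P^-$'' theorem is cited with the correct complexity class and in the correct (boldface) pointclass convention, so that it genuinely applies to $\conv_\alpha$ viewed as a subset of the Cantor space under the identification fixed in Section~\ref{sec:prelim}. One should double-check that the version of $P^-$ appearing in that reference coincides with the definition used here (\emph{hereditary weak} $P$, as on p.~2030 of \cite{MR3692233}); since the present paper adopts that same reference for the definition, the notions match and no translation is needed. Everything else reduces to quoting Propositions~\ref{prop:Borel_complexity} and~\ref{prop:Pminus-versus-conv-alpha}.
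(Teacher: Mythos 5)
Your proposal is correct and follows exactly the paper's own argument: $\Sigma^0_4$ membership from Proposition~\ref{prop:Borel_complexity}, failure of $P^-$ from Proposition~\ref{prop:Pminus-versus-conv-alpha}, and the external fact that every $\Pi^0_4$ ideal is $P^-$ (which the paper attributes to \cite[Proposition~4.9]{MR4584767} rather than to \cite{MR3692233}, so only the citation needs adjusting). No gaps.
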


\begin{proof}
In \cite[Proposition~4.9]{MR4584767}, the author proved that every $\Pi^0_4$ ideal is $P^-$, so $\conv_\alpha$ is not $\Pi^0_4$ by Proposition~\ref{prop:Pminus-versus-conv-alpha}. On the other hand, 
$\conv_\alpha$ is $\Sigma^0_4$ by Proposition \ref{prop:Borel_complexity}.
\end{proof}


\subsection{Cardinal characteristics of critical ideals}
\label{subsec:cardinal-characteristics}

Some properties of ideals can be described by cardinal characteristics associated with them. There are four well known cardinal characteristics called additivity, covering, uniformity and cofinality defined in the following way
for an ideal $\I$ on $X$ (see e.g.~\cite{MR1350295}):
$\add(\I) =\min\left\{|\cA|:\cA\subseteq\I\land \bigcup\cA\notin\I\right\}$,
$\cov(\I)  = \min\left\{|\cA|: \cA\subseteq\I\land \bigcup\cA=X\right\}$, 
$\non(\I)  = \min\{|A|:A\notin \I\}$, 
$\cof(\I)  =\min\{|\cA|:\cA\subseteq\I\land \forall B\in\I \, \exists A\in\cA \, (B\subseteq A)\}$.
These  characteristics are useful in the case of ideals on an uncountable set $X$ (for instance in the case of the $\sigma$-ideal  of all meager sets and the $\sigma$-ideal of all Lebesgue null sets). However, they are (but $\cof$) useless in the case of ideals on countable sets as  $\add(\I)=\cov(\I)=\non(\I)=\aleph_0$ for every ideal $\I$ on a countable set.
Fortunately, 
Hern\'{a}ndez and Hru\v{s}\'{a}k
introduced in \cite{MR2319159} (see also~\cite{MR2777744}) certain versions of these characteristics more suitable for tall ideals on countable sets
 ($A\subseteq^* B$ means that $A\setminus B$ is finite in this definitions):
\begin{equation*}
\begin{split}
\adds(\I) & =\min\{|\cA|:\cA\subseteq\I\land \neg \exists B\in\I \, \forall A\in\cA \, (A\subset^* B)\}, \\
\covs(\I)  & = \min\{|\cA|: \cA\subseteq\I\land \forall B\in [\omega]^{\aleph_0} \, \exists A\in\cA \, (|A\cap B|=\aleph_0)\},\\
\nons(\I) & = \min\{|\cA|:\cA\subseteq [\omega]^\omega\land \forall B\in\I \, \exists A\in\cA \, (|A\cap B|<\aleph_0)\}, \\
\cofs(\I) & =\min\{|\cA|:\cA\subseteq\I\land \forall B\in\I \, \exists A\in\cA \, (B\subset^* A)\}.
\end{split}
\end{equation*}
There are some inequalities holding among these characteristics for all ideals (see also Figure~\ref{fig:cardinal-characteristics}):
$\aleph_0\leq \adds(\I)\leq \covs(\I)\leq \cofs(\I)\leq 2^{\aleph_0}$
and
$\aleph_0\leq \adds(\I)\leq \nons(\I)\leq \cofs(\I)\leq 2^{\aleph_0}$  (see e.g.~\cite[p.~578]{MR2777744}).

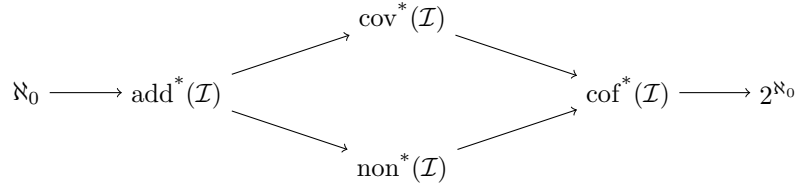
\begin{figure}
\centering
\begin{tikzpicture}[scale=1]
  \node (omega) at (-5,0) {$\aleph_0$};
  \node (add) at (-3,0) {$\adds(\I)$};
  \node (cov) at (0,1) {$\covs(\I)$};
  \node (non) at (0,-1) {$\nons(\I)$};
  \node (cof) at (3,0) {$\cofs(\I)$};
  \node (c) at (5,0) {$2^{\aleph_0}$};

  \draw[->] (omega) -- (add); 
  \draw[->] (add) -- (cov);
  \draw[->] (cov) -- (cof);
  \draw[->] (add) -- (non);
  \draw[->] (non) -- (cof);
  \draw[->] (cof) -- (c);
\end{tikzpicture}
    \caption{Relationships between chardinal characteristics for ideals on $\omega$ ($\kappa\to\lambda$ means $\kappa\leq \lambda$ in this diagram).}
    \label{fig:cardinal-characteristics}
\end{figure}

It is known (see e.g.~\cite{MR2777744}) that 
$\adds(\conv)=\adds(\Fin\otimes\Fin)=\aleph_0$,
$\nons(\conv)=\nons(\Fin\otimes\Fin)=\aleph_0$,
$\covs(\conv)=\cofs(\conv)=2^{\aleph_0}$, 
$\covs(\Fin\otimes\Fin)=\bnumber$ and  $\cofs(\Fin\otimes\Fin)=\dnumber$, where
$\bnumber$  is the \emph{bounding number} i.e.~the smallest cardinality of any unbounded family in the poset $(\omega^\omega,\leq^*)$ and 
$\dnumber$ is the \emph{dominating number} i.e.~the smallest cardinality of
any dominating family in the poset $(\omega^\omega,\leq^*)$ (for more on these cardinals see for instance \cite{MR2768685}). 
Moreover, if $\I\leq_K\J$, then  $\covs(\I)\geq \covs(\J)$ and $\nons(\I)\leq\nons(\J)$ (\cite[Proposition~3.1]{MR2319159}).

The following theorem provide the values of the above mentioned characteristics for critical ideals considered in the paper.
\begin{proposition} 
For every $\alpha$, 
$$\nons(\conv_\alpha) = \adds(\conv_\alpha)= \aleph_0, \quad 
\covs(\conv_\alpha)= \bnumber\quad\text{and} \quad \cofs(\conv_\alpha)=\dnumber.$$
\end{proposition}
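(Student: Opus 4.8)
My plan is to determine the four values separately: I obtain $\nons$, $\adds$ and both lower bounds from general order- and restriction-monotonicity, and I prove the two upper bounds by transfinite induction on $\alpha$. Since $2\le\alpha$, Corollary~\ref{cor:Katetov-between-conv} gives $\conv_\alpha\leq_K\conv_2$, and $\conv_2\approx\Fin\otimes\Fin$ by Proposition~\ref{prop:conv-for-1-2-3}. By Kat\v{e}tov-monotonicity of $\nons$ and $\covs$ (\cite[Proposition~3.1]{MR2319159}) we get $\nons(\conv_\alpha)\le\nons(\Fin\otimes\Fin)=\aleph_0$ and $\covs(\conv_\alpha)\ge\covs(\Fin\otimes\Fin)=\bnumber$; as $\aleph_0\le\nons$ and $\adds\le\nons$ always (Figure~\ref{fig:cardinal-characteristics}), this already yields $\nons(\conv_\alpha)=\adds(\conv_\alpha)=\aleph_0$. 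For the remaining lower bound I use a restriction instead, since $\cofs$ need not be Kat\v{e}tov-monotone: the set $Z=[0,\omega^2)$ is $\conv_\alpha$-positive and, identifying $\omega\cdot m+k$ with $(m,k)$, Lemma~\ref{lem:conv-ideal-via-derivative-set} shows that $\conv_\alpha\restriction Z\approx\Fin\otimes\Fin$. Since a $\subseteq^*$-cofinal family for an ideal $\I$ restricts (intersecting with $Z$) to one for $\I\restriction Z$, we obtain $\cofs(\conv_\alpha)\ge\cofs(\Fin\otimes\Fin)=\dnumber$.

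The core is the pair of upper bounds $\covs(\conv_\alpha)\le\bnumber$ and $\cofs(\conv_\alpha)\le\dnumber$, proved simultaneously by induction on $\alpha$ with base case $\alpha=2$. In the successor case $\alpha=\gamma+1$, Corollary~\ref{cor:conv-alpha-as-products} identifies $\conv_\alpha$ with $\cK=(\Fin\otimes\Fin(\omega^\gamma+1))\cap(\{\emptyset\}\otimes\conv_\gamma)$ on $\omega\times(\omega^\gamma+1)$; in the limit case I would fix $\lambda_0=0<\lambda_1<\cdots$ with $\sup_n\lambda_n=\omega^\alpha$ and each $(\lambda_n,\lambda_{n+1}]$ order-isomorphic to $\omega^{\beta_n}+1$ (so $2\le\beta_n<\alpha$), and prove the analogue of Corollary~\ref{cor:conv-alpha-as-products}: $A\in\conv_\alpha$ iff $A\cap(\lambda_n,\lambda_{n+1}]$ is finite for all but finitely many $n$ and lies in $\conv((\lambda_n,\lambda_{n+1}])\cong\conv_{\beta_n}$ for every $n$. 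Either way, after a bijection enumerating the $n$-th interval as the $n$-th column, $\conv_\alpha$ becomes the ideal $\cK$ of all $A\subseteq\omega\times\omega$ with only finitely many infinite columns, each column lying in a fixed copy of $\conv_{\beta_n}$; the inductive hypothesis supplies $\covs(\conv_{\beta_n})=\bnumber$ and $\cofs(\conv_{\beta_n})=\dnumber$.

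It then remains to assemble the covering and cofinal families for $\cK$. For $\covs(\cK)\le\bnumber$: sets $B$ with some infinite column $n$ are caught by one of a $\bnumber$-sized family $\{\{n\}\times W:W\in\conv_{\beta_n}\}$ (per column, using the inductive $\covs$), while sets $B$ with all columns finite are caught by a fixed $\Fin\otimes\Fin$-covering family $\{A_\xi:\xi<\bnumber\}$ after deleting from each $A_\xi$ its finitely many initial columns, which forces the members into $\cK$ without destroying the infinite intersection. For $\cofs(\cK)\le\dnumber$: given $B\in\cK$ with infinite columns on a finite set $N$, cover each such $B_{(n)}$ by a member of an inductive $\dnumber$-sized cofinal family for $\conv_{\beta_n}$, and cover the cofinitely many finite columns by the sets $\{(n,m):m\le g(n)\}$ with $g$ ranging over a dominating family (after coding the finite subsets of each column as initial segments); indexing over the countably many choices of $N$ keeps the total size $\dnumber$.

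The step I expect to be most delicate is the limit-case decomposition lemma: unlike the successor case handled by Corollary~\ref{cor:conv-alpha-as-products}, the endpoints $\lambda_n$ are themselves limit ordinals, so computing $A^d$ requires careful bookkeeping of the boundary points $\lambda_n$ and of the two stray points $0$ and $\omega^\alpha$. The genuinely combinatorial ``cross-column'' content—that a $\bnumber$-sized family catches every finite-column set—I deliberately avoid re-deriving, routing it through the cited identity $\covs(\Fin\otimes\Fin)=\bnumber$ together with the truncation of the $A_\xi$ described above; the main verification that remains is then simply that all the constructed sets indeed lie in $\cK$ rather than merely in $\Fin\otimes\Fin$.
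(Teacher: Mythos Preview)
Your proposal is correct and follows essentially the same inductive architecture as the paper: identical treatment of $\nons$, $\adds$, and the $\covs$ lower bound, and the same column-decomposition induction (successor via Corollary~\ref{cor:conv-alpha-as-products}, limit via a cofinal sequence) for the two upper bounds, with the paper phrasing the ``all-columns-finite'' part as $\covs(\{\emptyset\}\otimes\Fin)=\bnumber$ and $\cofs(\{\emptyset\}\otimes\Fin)=\dnumber$ rather than your explicit truncation and dominating-family coding. The only substantive variation is the $\cofs$ lower bound, where you use the restriction $\conv_\alpha\restriction[0,\omega^2)\approx\Fin\otimes\Fin$ and monotonicity of $\cofs$ under restriction, whereas the paper directly exhibits a dominating family from any $\subseteq^*$-cofinal one via the sets $B_g=\bigcup_n[\omega\cdot n,\omega\cdot n+g(n)]$; both arguments are equally short.
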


\begin{proof}
($\nons(\conv_\alpha)=\aleph_0$)
Since $\conv_\alpha\leq_K\conv_2\leq_K\Fin\otimes\Fin$, we obtain $\aleph_0\leq \nons(\conv_\alpha)\leq \nons(\Fin\otimes\Fin) = \aleph_0$.

($\adds(\conv_\alpha)=\aleph_0$) Since $\aleph_0\leq \adds(\conv_\alpha) \leq \nons(\conv_\alpha)=\aleph_0$, it follows that $\adds(\conv_\alpha)=\aleph_0$.

($\covs(\conv_\alpha)=\bnumber$)  Since $\conv_\alpha\leq_K\conv_2\leq_K\Fin\otimes\Fin$, then $\covs(\conv_\alpha)\geq \covs(\fin\otimes\fin)=\bnumber$. We will show the reverse inequality by induction on $\alpha$.
Assume the statement holds for any $\beta<\alpha$.

\emph{Successor case:} $\alpha$ is a successor ordinal, so $\alpha=\gamma+1$.
Let's notice that for any infinite set $X\in \conv_{\gamma+1}$, either there is some $n $ such that the set $X\cap \omega^\gamma \cdot n$ is infinite, or the set $X\cap[\omega^\gamma \cdot n, \omega^\gamma \cdot (n+1)]$ is finite for each $n$. 
Since  $\conv_\gamma$ is isomorphic to $\conv(\omega^\gamma \cdot n)$, by inductive hypothesis we obtain 
a witness, say $\cA_n\subseteq \conv(\omega^\gamma \cdot n)$,  for $\covs(\conv(\omega^\gamma \cdot n)) =\bnumber$ for each $n\geq1$. 
Since $\covs(\{\emptyset\}\otimes \fin)=\bnumber$, there exists a family $\cB\subset \cP(\omega^{\gamma+1}+1)$ with the following properties: 
\begin{enumerate}
    \item $|\mathcal{B}|=\bnumber$,
    \item if $F\in \cB$ then the set $F\cap [\omega^\gamma \cdot n,\omega^\gamma \cdot (n+1))$ is finite for each $n$,
    \item for any infinite $A\subset \omega^{\gamma+1}+1$, if the set $A\cap[\omega^\gamma \cdot n,\omega^\gamma \cdot (n+1))$ is finite for each $n$, then there exists some $F\in \cB$ such that the set $A\cap F$ is infinite.
\end{enumerate}
Let $\cC=\cB \cup \bigcup_{n<\omega} \cA_n$. Then $\cC$ is a subfamily of $\conv_{\gamma+1}$ of cardinality  $\bnumber$ having the property that for any infinite $D\in \conv_{\gamma+1}$ there exists some $C\in \cC$ such that the set $D\cap C$ is infinite, so it follows that $\covs(\conv_{\gamma+1})\leq\bnumber$. 

\emph{Limit case:}  $\alpha$ is a limit ordinal. Let $(\alpha_n)_{n<\omega}$ be a strictly increasing sequence that is cofinal in $\alpha$ with $\alpha_0=0$. Let's notice that for any infinite set $X\in \conv_{\alpha}$, either there is some $n $ such that the set $X\cap \omega^{\alpha_n}$ is infinite, or the set $X\cap[\omega^{\alpha_n}, \omega^{\alpha_{n+1}}]$ is finite for each $n$. 
By inductive hypothesis, we can find a witness, say $\cA_n\subseteq \conv_{\alpha_n}$, for $\covs(\conv_{\alpha_n}) = \bnumber$
 for each $n\geq 1$.  
Since $\covs(\{\emptyset\}\otimes \fin)=\bnumber$, there exists a family $\cB\subset \cP(\omega^{\alpha}+1)$ with the following properties: 
\begin{enumerate}
    \item $|\mathcal{B}|=\bnumber$,
    \item if $F\in \cB$ then the set $F\cap [\omega^{\alpha_n},\omega^{\alpha_{n+1}})$ is finite for each $n$,
    \item for any infinite $A\subset \omega^{\alpha}+1$, if the set $A\cap[\omega^{\alpha_n},\omega^{\alpha_{n+1}})$ is finite for each $n$, then there exists some $F\in \cB$ such that the set $A\cap F$ is infinite.
\end{enumerate}
Analogically to the previous case, the family $\cC=\cB \cup \bigcup_{n<\omega} \cA_n$ is a witness for the required inequality.

($\cofs(\conv_\alpha)=\dnumber$) First, we prove that $\cofs(\conv_\alpha)\leq\dnumber$ by induction on $\alpha$. Assume the statement holds for any $\beta<\alpha$.

\emph{Successor case:} $\alpha$ is a successor ordinal, so $\alpha=\gamma+1$. Let's notice that for any set $A\in \conv_{\gamma+1}$  there is some $n$ such that $A\cap\omega^\gamma \cdot n \in \conv(\omega^\gamma \cdot n) $ and for any $k\geq n$ the set $A\cap[\omega^\gamma\cdot k, \omega^\gamma\cdot (k+1))$ is finite.
Since  $\conv_\gamma$ is isomorphic to $\conv(\omega^\gamma \cdot n)$, by inductive hypothesis we obtain a witness, say $\mathcal{A}_n\subseteq \conv(\omega^\gamma \cdot n)$, for $\cofs(\conv(\omega^\gamma \cdot n)) =\dnumber$  for each $n\geq1$.
Since $\cofs(\{\emptyset\}\otimes \fin)=\dnumber$, there exists a family $\cB\subseteq \cP(\omega^{\gamma+1}+1)$ with the following properties: 
\begin{enumerate}
    \item $|\cB|=\dnumber$,
    \item if $F\in \cB$ then the set $F\cap [\omega^\gamma \cdot n,\omega^\gamma \cdot (n+1))$ is finite for each $n$,
    \item for any $A\subset \omega^{\gamma+1}+1$, if the set $A\cap[\omega^\gamma \cdot n,\omega^\gamma \cdot (n+1))$ is finite for each $n$ then there exists some $F\in \cB$ such that $A\subset^* F$.
\end{enumerate}
Let $\cC=\{A\cup B : A\in \bigcup_{n\in \omega}\cA_n, B\in \cB\}$. Then $\cC$ is a subfamily of $\conv_{\gamma+1}$ such that $|\cC|=\dnumber$ and for any $D\in \conv_{\gamma+1}$ there exists some $C\in \cC$ with $D\subset^*C$,  so it follows that $\cofs(\conv_{\gamma+1})\leq\dnumber$. 

\emph{Limit  case:} $\alpha$ is a limit ordinal. Let $(\alpha_n)_{n<\omega}$ be a strictly increasing sequence that is cofinal in $\alpha$ with $\alpha_0=0$. Then 
 $$\conv_{\alpha}=\bigcup_{n\in \omega}\left\{A\subseteq \omega^{\alpha}+1 : A\cap\omega^{\alpha_n} \in \conv_{\alpha_n} \ \land \ \forall_k\geq n \ A\cap[\omega^{\alpha_k}, \omega^{\alpha_{k+1}})\in\Fin \right\}.$$
 Let $\mathcal{A}_n\subset \conv_{\alpha_n}$ be a witness for $\cofs(\conv_{\alpha_n}) =\dnumber$. Since $\cofs(\{\emptyset\}\otimes \fin)=\dnumber$, there exists a family $\mathcal{B}\subseteq \cP(\omega^{\alpha}+1)$ with the following properties: 
\begin{enumerate}
    \item $|\mathcal{B}|=\dnumber$,
    \item if $F\in \mathcal{B}$ then the set $F\cap [\omega^{\alpha_n}, \omega^{\alpha_{n+1}})$ is finite for each $n$,
    \item for any $A\subset \omega^{\alpha}+1$, if the set $A\cap[\omega^{\alpha_n}, \omega^{\alpha_{n+1}})$ is finite for each $n$, then there exists some $F\in \mathcal{B}$ such that $A\subset^*F$.
\end{enumerate}
Analogically to the previous case, the family $\cC=\{A\cup B : A\in \bigcup_{n\in \omega}\cA_n, B\in \cB\}$ is a witness for the required inequality.  

    Second, we prove that $\dnumber\leq \cofs(\conv_\alpha)$. 
    Let $\mathcal{A}$ be the witness for $\cofs(\conv_\alpha)$. For any $g\in \omega^\omega$ we define the set $B_g=\bigcup_{n<\omega}[\omega\cdot n, \omega \cdot n + g(n)]$ and for any $A\in\conv_\alpha$ we define $f_A=\{(n, \max\{k: \omega\cdot n + k \in A\}): n \in \omega\}$, with the convention that $\max\emptyset=0$ and $\max(C)=0$ for any infinite set $C$.  We will show $\mathcal{F}=\{f_A: A\in \mathcal{A}\}$ to be the dominating family in $\omega^\omega$. Take any $g\in\omega^\omega$. Since the  derivative of the set $B_g$ is contained in $\{\omega^2\}$, it follows that $B\in\conv_\alpha$, so there exists some set $A\in \mathcal{A}$ such that $B_g\subset^* A$, then $g(n)\leq f_A(n)$ for all but finitely many $n$. Hence  $\dnumber\leq |\mathcal{F}|=|\mathcal{A}|=\cofs(\conv_\alpha)$.
\end{proof}


\bibliographystyle{amsplain}
\bibliography{references}

\providecommand{\bysame}{\leavevmode\hbox to3em{\hrulefill}\thinspace}
\providecommand{\MR}{\relax\ifhmode\unskip\space\fi MR }
\providecommand{\MRhref}[2]{%
  \href{http://www.ams.org/mathscinet-getitem?mr=#1}{#2}
}
\providecommand{\href}[2]{#2}
\begin{thebibliography}{10}

\bibitem{MR3034318}
Pawe\l{} Barbarski, Rafa\l{} Filip\'{o}w, Nikodem Mro\.{z}ek, and Piotr Szuca,
  \emph{When does the {K}at\v{e}tov order imply that one ideal extends the
  other?}, Colloq. Math. \textbf{130} (2013), no.~1, 91--102. \MR{3034318}

\bibitem{MR1350295}
T.~Bartoszy\'{n}ski and H.~Judah, \emph{Set theory}, A K Peters, Ltd.,
  Wellesley, MA, 1995, On the structure of the real line. \MR{1350295}

\bibitem{MR2768685}
Andreas Blass, \emph{Combinatorial cardinal characteristics of the continuum},
  Handbook of set theory. {V}ols. 1, 2, 3, Springer, Dordrecht, 2010,
  pp.~395--489. \MR{2768685}

\bibitem{MR2899832}
Rafa\l{} Filip\'{o}w and Piotr Szuca, \emph{Three kinds of convergence and the
  associated {$\mathcal{I}$}-{B}aire classes}, J. Math. Anal. Appl.
  \textbf{391} (2012), no.~1, 1--9. \MR{2899832}

\bibitem{MR1181163}
J.~A. Fridy, \emph{Statistical limit points}, Proc. Amer. Math. Soc.
  \textbf{118} (1993), no.~4, 1187--1192. \MR{1181163}

\bibitem{MR3513296}
Osvaldo Guzm\'{a}n-Gonz\'{a}lez and David Meza-Alc\'{a}ntara, \emph{Some
  structural aspects of the {K}at\v{e}tov order on {B}orel ideals}, Order
  \textbf{33} (2016), no.~2, 189--194. \MR{3513296}

\bibitem{MR2319159}
F.~Hern\'{a}ndez-Hern\'{a}ndez and M.~Hru\v{s}\'{a}k, \emph{Cardinal invariants
  of analytic {$P$}-ideals}, Canad. J. Math. \textbf{59} (2007), no.~3,
  575--595. \MR{2319159}

\bibitem{MR3692233}
M.~Hru\v{s}\'{a}k, D.~Meza-Alc\'{a}ntara, E.~Th\"{u}mmel, and C.~Uzc\'{a}tegui,
  \emph{Ramsey type properties of ideals}, Ann. Pure Appl. Logic \textbf{168}
  (2017), no.~11, 2022--2049. \MR{3692233}

\bibitem{MR2777744}
Michael Hru\v{s}\'{a}k, \emph{Combinatorics of filters and ideals}, Set theory
  and its applications, Contemp. Math., vol. 533, Amer. Math. Soc., Providence,
  RI, 2011, pp.~29--69. \MR{2777744}

\bibitem{MR597342}
Kenneth Kunen, \emph{Set theory}, Studies in Logic and the Foundations of
  Mathematics, vol. 102, North-Holland Publishing Co., Amsterdam-New York,
  1980, An introduction to independence proofs. \MR{597342}

\bibitem{MR4584767}
A.~Kwela, \emph{Unboring ideals}, Fund. Math. \textbf{261} (2023), no.~3,
  235--272. \MR{4584767}

\bibitem{MR3594409}
A.~Kwela and J.~Tryba, \emph{Homogeneous ideals on countable sets}, Acta Math.
  Hungar. \textbf{151} (2017), no.~1, 139--161. \MR{3594409}

\bibitem{Mazurkiewicz-Sierpinski}
S.~Mazurkiewicz and W.~Sierpi\'{n}ski, \emph{Contribution \`{a} la topologie
  des ensembles d\'{e}nombrables}, Fund. Math. \textbf{1} (1920), no.~1,
  17--27.

\bibitem{alcantara-phd-thesis}
D.~Meza-Alc\'{a}ntara, \emph{Ideals and filters on countable set}, Ph.D.
  thesis, Universidad Nacional Aut\'{o}noma de M\'{e}xico, 2009,
  (\url{https://ru.dgb.unam.mx/handle/DGB_UNAM/TES01000645364}).

\bibitem{MR3550610}
Nikodem Mro\.{z}ek, \emph{Some applications of the {K}at\v{e}tov order on
  {B}orel ideals}, Bull. Pol. Acad. Sci. Math. \textbf{64} (2016), no.~1,
  21--28. \MR{3550610}

\bibitem{MR296671}
Zbigniew Semadeni, \emph{Banach spaces of continuous functions. {V}ol. {I}},
  Monografie Matematyczne [Mathematical Monographs], vol. Tom 55, PWN---Polish
  Scientific Publishers, Warsaw, 1971. \MR{296671}

\end{thebibliography}

\end{document}